\newcommand{\ii}{\mathbbm{i}}
\newcommand{\tee}{\mathfrak{t}}
\newcommand{\ai}{\mathfrak{i}}
\renewcommand{\C}{\mathbb{C}}
\newcommand{\CW}{\mathcal{C}_{\mathbb{W}}}
\newcommand{\Hyp}{\mathbb{H}}
\newcommand{\R}{\mathbb{R}}
\newcommand{\Z}{\mathbb{Z}}
\newcommand{\W}{\mathbb{W}}
\newcommand{\height}{\operatorname{ht}}
\newcommand{\htk}{\hat{T}|_{\overline{K}}}
\newcommand{\id}{\operatorname{id}}
\newcommand{\X}{\mathbb X}
\newcommand{\norm}[1]{\left\vert #1 \right \vert}	
\newcommand{\Norm}[1]{\left\Vert #1 \right \Vert}
\newcommand{\floor}[1]{\left\lfloor #1 \right\rfloor}
\renewcommand{\Re}{\text{Re}}
\renewcommand{\Im}{\text{Im}}
\newcommand{\Isom}{\text{Isom}}
\newcommand{\Stab}{\text{Stab}}
\newcommand{\rad}{\operatorname{rad}}
\def\[#1\]{\begin{linenomath}\begin{align}#1\end{align}\end{linenomath}}
\def\(#1\){\begin{linenomath}\begin{align*}#1\end{align*}\end{linenomath}}
\newcommand{\ignore}[1]{}
\newtheorem{thm}{Theorem}
\newtheorem{prop}[thm]{Proposition}
\newtheorem{lemma}[thm]{Lemma}
\newtheorem{cor}[thm]{Corollary}
\newtheorem{question}{Question}
\numberwithin{thm}{section}
\theoremstyle{definition}
\newtheorem{defi}[thm]{Definition}
\theoremstyle{definition}
\newtheorem{example}[thm]{Example}
\newtheorem{remark}[thm]{Remark}
\numberwithin{equation}{section}
\newcommand{\st}{\;:\;}
\newcommand{\mst}{\;:\;}
\newcommand{\Mod}{\mathcal M}
\newcommand{\Zee}{\mathcal Z}
\newcommand{\ZeePrime}{\Zee'}
\newcommand{\Sph}{\mathbb S}
\title[Ergodicity of Iwasawa Continued Fractions]{Ergodicity of Iwasawa continued fractions via markable hyperbolic geodesics}
\author[A. Lukyanenko]{Anton Lukyanenko}
\address{
Department of Mathematics\\
George Mason University\\
4400 University Drive, MS: 3F2\\
Fairfax, Virginia 22030}
\email{anton@lukyanenko.net}
\author[J. Vandehey]{Joseph Vandehey}
\address{
Department of Mathematics\\
University of Texas at Tyler\\
Tyler, TX 75799
}
\email{jvandehey@uttyler.edu}
\subjclass[2010]{37D40 (11K50, 37A45)}
\keywords{Continued fractions, geodesic coding, ergodicity, complex continued fractions, Iwasawa continued fractions, Heisenberg continued fractions}
\begin{document}

\date{\today}

\begin{abstract}
We prove the convergence and ergodicity of a wide class of real and higher-dimensional continued fraction algorithms, including folded and $\alpha$-type variants of complex, quaternionic, octonionic, and Heisenberg continued fractions, which we combine under the framework of Iwasawa continued fractions. The proof is based on the interplay of continued fractions and hyperbolic geometry, the ergodicity of geodesic flow in associated modular manifolds, and a variation on the notion of geodesic coding that we refer to as geodesic marking. As a corollary of our study of markable geodesics, we obtain a generalization of Serret's tail-equivalence theorem for almost all points. The results are new even in the case of some real and complex continued fractions.
\end{abstract}

\maketitle

\epigraph{\textit{``\ldots attempts to find a precise relation between the cutting sequence
of [a geodesic] $\gamma$ and the continued-fraction expansions of endpoints of suitable lifts of $\gamma$ are
fraught with minor discrepancies.''}}{---Caroline Series \cite{Series1}}
\section{Introduction}
\subsection{Background}

Since the early work by Lagrange and Gauss linking regular continued fractions (CFs) to algebra and dynamical systems, an extensive and ongoing effort has focused on expanding the scope of CF theory to new algorithms. While regular CFs represent the fractional part $x-\floor{x}$ of a real number $x\in \R$ as a descending iterated fraction \[\cfrac{1}{a_1+\cfrac{1}{a_2+\cdots}}\] with positive integer digits, a menagerie of one-dimensional CF variants have been formed by modifying various aspects of this simple construction: whether by changing positive quantities to negative, altering the set of allowable digits, or selecting a different set of numbers to have expansions. (See \S \ref{subsec:landscape} for an introduction to many of these variants.) 

After over 200 years of study, the one-dimensional CFs are largely well-understood. Most of them inherit the essential properties of regular CFs from the viewpoints of algebra, dynamics, and geometry: Lagrange's Theorem, shift map ergodicity, and Diophantine interpretation, respectively. The study of one-dimensional CFs has been facilitated by a connection to hyperbolic geometry, pioneered by Artin \cite{Artin} and developed by Series \cite{Series1}, Katok and Ugarcovici \cite{KU2005,KU2007,KU2012}, and others. In particular, Artin observed that the Gauss map for regular CFs can be identified with a section of geodesic flow in a finite cover of the modular surface; leading to extensive developments in both CF theory and the study of geodesics on hyperbolic manifolds.

In trying to extend these properties beyond one-dimensional CFs, one is immediately confronted by the question of how to generalize one-dimensional CFs to more than one dimension. Several algorithms, such as those of Jacobi-Perron, Brun, and Selmer, act by building up the CF expansion to several different real values simultaneously \cite{SchweigerBookmulti}. Other algorithms, such as the Hurwitz complex CF algorithm \cite{MR1554754} or the Heisenberg CF algorithm studied previously by the authors \cite{LV}, treat points in these spaces as single entities with a single continued fraction expansion.\footnote{Yet another type of CF-like algorithm deriving more from geometric properties can be seen in \cite{Hayward,HP}.} This is analogous to how complex points can be understood either via their real and complex part (i.e., essentially in $\mathbb{R}^2$) or as an element in complex space (in $\mathbb{C}$). In this paper we will generally be interested in the latter form of higher-dimensional CF expansion, as it has a more natural connection to hyperbolic geometry.

The story of these higher-dimensional CFs has been markedly different from their real CF cousins. Despite interest in these topics stretching back to the 1850's \cite{Hamilton1,Hamilton2}, only a small number of algorithms are known to be well-behaved. Among them is the A. Hurwitz complex CF \cite{MR1554754}, which represents a complex number $z$ with real and imaginary part both in $[-1/2,1/2)$ as a descending iterated fraction 
\[
\cfrac{1}{a_1+\cfrac{1}{a_2+\cfrac{1}{a_3+\dots}}}, \qquad a_i \in \mathbb{Z}[\ii]\setminus \{0,\pm 1,\pm\ii\}.
\]
(See \ref{subsubsection:ComplexCFs} for a full description.) Proofs of, for example, ergodicity for these well-behaved algorithms are extremely delicate \cite{NakadaErgodic}: the space of the algorithm has a serendipitous decomposition, which results in a finite range property among other features, and this allows high-powered results (such as those in \cite{IY,SW}) to be applied. Should the algorithm be perturbed, even slightly (see \S\ref{subsubsection:ComplexCFs} and Figure \ref{fig:hurwitz}), the decomposition will break down and the methods will no longer apply.

As a major goal of this paper is to prove properties like ergodicity for a larger variety of higher-dimensional CFs (including perturbed variations of standard algorithms), let us discuss some of the roadblocks to using traditional techniques. First of all, one does not expect the structure of the cylinder sets (the sets of numbers whose expansion all start with the same sequence of digits) to have a simple structure, so methods like those cited above will not apply. Second, the natural extension of even some one-dimensional CF variants  (see \cite[\S 7]{AS}) as well as simple higher-dimensional CFs (see \cite{EINN,HV}) is already fractal in nature, which makes it difficult to prove results about the natural extension, let alone about the simpler algorithm. Third, making a precise connection between CF digits and geodesic coding is ``fraught with minor discrepancies'' and in its strong form would imply properties such as Serret's tail equivalence theorem \cite{Panti} that are known to fail for higher-dimensional CFs. Indeed, the geodesic coding approach has long been considered ``intrinsically two-dimensional'' \cite{AF84}.

In this paper, we develop a softer version of geodesic coding, which we refer to as \emph{geodesic marking}. In a typical\footnote{What we describe here is an arithmetic coding, in the terminology of Katok and Ugarcovici \cite{KU2005}. Codings formed by cutting sequences are related.} geodesic coding, we look at a given geodesic from two perspectives: first, we have a bi-infinite sequence formed by the continued fraction digits of both the forward and backward endpoints of the geodesic, and second, we have a bi-infinite sequence of intersections of our geodesic with a particular cross-section. A shift in one sequence should correspond to a shift in the other. In particular, returning to the cross-section after flowing along the geodesic should move the CF expansion forward one digit. Our geodesic \emph{marking} still has the two bi-infinite sequences, but now returning to the cross-section can move the CF expansion forward several digits at a time. Thus the first-return map to the cross-section now corresponds to a jump transformation for the continued fraction.  This jump transformation, in practice, skips over strings of small digits.\footnote{What counts as a small digit could be made effective, but we do not do so here.} It should be noted that small digits appear to cause some of the roadblocks mentioned above: cylinder sets associated to small digits tend to be irregular, while those of large digits are far better behaved, for instance. So, in essence, geodesic markings skip over the troublesome parts of CF algorithms.

Geodesic marking provides a robust connection between these higher-dimensional CFs and hyperbolic geometry which is preserved even under perturbation of the algorithm. The following theorem illustrates some significant cases our work applies to:
\begin{thm}
\label{thm:super-main}
Folded complex CFs, folded Hurwitz quaternionic CFs, folded octonionic CFs, and folded Heisenberg CFs as well as their $\alpha$-type variants are convergent and ergodic.
\end{thm}
In particular, this illustrates how our work applies to several different spaces (complex numbers, quaternionic numbers, octonionic numbers, and the Heisenberg group) and many systems within those spaces (folded and $\alpha$-type variants are discussed in more detail in \S \ref{subsec:landscape}, see also Figure \ref{fig:hurwitz}). Our results also apply to several one-dimensional CF algorithms, such as folded real CFs and some of Nakada's $\alpha$-CFs (see \S\ref{subsubsec:nearest} and \S \ref{subsubsec:folded}).

While convergence follows standard arguments, the ergodicity statement is a substantial breakthrough for higher-dimensional CFs, where it was only previously known for specific complex CF variants, such as the A.~Hurwitz and J.~Hurwitz CFs. 
Our approach furthermore provides a flexible, unifying method for understanding both one-dimensional and higher-dimensional CFs.

Theorem \ref{thm:super-main} follows from a more general result concerning CFs on boundaries of rank-one symmetric spaces of non-compact type, which we refer to as \emph{Iwasawa inversion spaces}. CFs were first extended to this setting by the authors in \cite{LV}, where a CF theory on the non-commutative Heisenberg group was proposed. In \cite{MR4126256}, Chousionis-Tyson-Urbanski, studying conformal iterated function systems, defined Iwasawa continued fractions on the closely-related \emph{Iwasawa groups} (see \S \ref{sec:further}). Here, we extend the definition of Iwasawa CFs to an Iwasawa CF \emph{algorithm} associating a digit sequence to each point in an Iwasawa inversion space and leverage the connection to hyperbolic geometry to prove the following theorem:

\begin{thm}\label{thm:main}
Every discrete and proper Iwasawa CF is convergent. Moreover, if it is complete, then it is ergodic.
\end{thm}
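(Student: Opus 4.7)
The strategy is to translate the statement into hyperbolic geometry, proving convergence by showing that the associated hyperbolic geodesic descends to a typical point in a modular manifold, and proving ergodicity by realizing the Gauss map as a cross-section of the geodesic flow and invoking ergodicity of the flow.

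First, I would set up the hyperbolic dictionary. Each Iwasawa inversion space appears as the visual boundary (minus a point) of a rank-one symmetric space $X$ of non-compact type: real, complex, quaternionic, or octonionic hyperbolic space, depending on the Iwasawa data. The integer translations form a subgroup of the stabilizer of the cusp at infinity, and the inversion swaps that cusp with a bounded point. Together they generate the modular group $\Gamma$ acting on $X$, and the CF algorithm is precisely the procedure ``translate into a fundamental domain for $\Gamma_\infty$, then invert.'' A CF expansion of a point $\xi$ on the boundary is thus a sequence of $\Gamma$-elements whose action on a fixed geodesic ray from $\infty$ produces a geodesic asymptotic to $\xi$.

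Second, for convergence, I would show that under the discreteness and properness hypotheses the diameter of the $n$-th CF cylinder shrinks to zero. Discreteness ensures that the horoballs centered at CF approximants have a uniform geometric structure and do not overlap pathologically, while properness controls the behavior of the Gauss map near the cuspidal singular set so that the CF digit sequence cannot stall in a cusp excursion forever. Concretely, I would argue that the lifted geodesic in $X$, followed in the quotient $\Gamma \backslash X$, must exit the cusp infinitely often, which forces the successive inversions to contract the relevant cylinders; this is the natural higher-dimensional analogue of the classical shrinking-cylinder argument for regular real continued fractions.

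Third, for ergodicity, I would construct a measurable isomorphism, modulo null sets, between the Gauss map and the first-return map of the geodesic flow on $\Gamma \backslash X$ to a transversal consisting of geodesics at a fixed height above the fundamental domain's cuspidal face. The notion of \emph{geodesic marking} is exactly what makes this work: a geodesic is markable when both endpoints carry well-defined CF expansions, and completeness guarantees that almost every geodesic (with respect to the Liouville measure) is markable. Since the geodesic flow on $\Gamma \backslash X$ is ergodic with respect to the Liouville measure — a classical Hopf-type result that holds even when the measure is infinite, provided one restricts to the non-wandering set where the flow is conservative — the induced first-return map is ergodic, and hence so is the Gauss map.

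The main obstacle is the construction and verification of the marking. Unlike the one-dimensional case, where Series coding gives an almost bijective correspondence between boundary pairs and geodesics, the higher-dimensional fundamental domains have positive-dimensional cuspidal boundaries along which the CF encoding is ambiguous or undefined, and one must show that these ambiguity sets project to measure-zero subsets of the transversal. Equally delicate is the handling of the possibly infinite Liouville measure: one must verify that the return-time function is finite almost everywhere, so that the Ambrose--Kakutani suspension correspondence still transfers ergodicity from the flow to the discrete Gauss map without requiring a finite invariant measure. This last point is where the geodesic approach decisively improves on finite-range methods, since it bypasses the need to construct an explicit invariant measure for the Gauss map.
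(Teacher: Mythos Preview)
Your high-level strategy is correct and matches the paper's: use the hyperbolic dictionary, invoke ergodicity of the geodesic flow on the modular quotient, and transfer it to the Gauss map via a cross-section. However, there is a genuine structural gap in your ergodicity argument, and your convergence sketch misses the actual mechanism.

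\textbf{Ergodicity.} You propose to construct a measurable isomorphism between the Gauss map $T$ and the first-return map to a transversal. This is precisely what the paper \emph{cannot} do, and circumventing it is the paper's main technical contribution. In dimensions greater than one, the unit sphere $\Sph\subset\Hyp$ is not totally geodesic, and a geodesic may cross it (or a wall $M\W$) several times per CF step; worse, strings of small digits create cusp excursions during which the geodesic fails to cross any wall in a controlled way. Consequently the first-return map to the natural cross-section $\CW$ is not $T$ but a \emph{jump transformation} $\Psi=\hat T^{i_1(\cdot)}$ with variable, unbounded jump size. Ergodicity of a jump transformation does \emph{not} in general imply ergodicity of the base map, so one needs the additional argument of \S\ref{subsec:extension}: extend $T$ to $\hat T$ on a two-sided space $\overline K$, show $\overline K=\bigcup_i \hat T^{-i}\pi(\CW)$, verify that $\Psi$ is the map induced by $\hat T$ on $\pi(\CW)$, and then deduce ergodicity of $\hat T$ and finally of $T$ by projection. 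Your plan collapses this step.

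You also misplace the role of completeness: it is not what makes almost every geodesic markable (that follows from ergodicity of the flow, Corollary \ref{cor:markablegeneric}). Completeness enters in Corollary \ref{cor:h0}, where one needs $\Stab_\Mod(\infty)=\Zee$ to conclude from $M\infty=M_{\ai_1}\infty$ that $M=M_{\ai_1}$; without it the cusp-detection property fails and the marking is not well-defined. Finally, your worry about infinite Liouville measure is misplaced: discreteness here \emph{means} $\Mod$ is a lattice, so $\Mod\backslash\Hyp$ has finite volume and Mautner's theorem applies directly. The infiniteness issue, when it arises, is for the $T$-invariant measure on $K$, downstream of the ergodicity proof.

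\textbf{Convergence.} The paper's argument is not a cusp-exit argument for geodesics. It is an algebraic growth-of-denominators argument (extending \cite{LV}), and the only geometric input is a uniform lower bound $\Norm{q_m}>C$ whenever $q_m\neq 0$, obtained from the disjointness of high horoballs (Theorem \ref{thm:thick-thin}, the thick-thin decomposition) via Lemma \ref{lemma:discreteness}. Properness is used in the growth estimate, not to control cusp excursions. Your sketch ``the geodesic exits the cusp infinitely often, forcing contraction'' does not obviously yield the needed quantitative denominator bound.
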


We will postpone the full definitions of these terms until \S \ref{sec:defn}, but will provide some insight into them now. Discreteness simply says that the modular group $\Mod$ associated to our CF algorithm acts discretely on the corresponding hyperbolic space. It is necessary to ensure that we have a finite-volume hyperbolic manifold (generalizing the modular surface) in which to look at geodesic flow. Properness says that the only points under consideration for our CF algorithm have norm bounded away from 1. This guarantees that CF expansions converge quickly, among other properties. Properness also helps us avoid indifferent fixed points in our dynamical system, which have been noted before to cause infinite invariant measures \cite{DHKM}. Completeness says that the set of digits for our CF algorithm is maximal in an appropriate sense, the upshot of which is that the sequence of digits in the CF expansion of a point is functionally the only expansion the hyperbolic geometry can see.

In the case where a system is not complete, we can still obtain a partial result:

\begin{thm}\label{thm:notmain}
Let $T: K\rightarrow K$ be the shift map for a discrete and proper Iwasawa CF with $n\geq 1$ central symmetries. Then $T$ has at most $n$ ergodic components.
\end{thm}

A full description of central symmetries will appear in \S\ref{defi:centrallySymmetric}. For the moment, we can consider centrally symmetric systems as ones where the system is incomplete due to the appearance of hidden symmetries, such as $x\mapsto -x$, as in \S\ref{subsubsec:folded}.

The use of geodesic marking, as opposed to classical geodesic coding, is critical to Theorem \ref{thm:main}. As noted above, one typical corollary of geodesic coding is Serret's tail-equivalence theorem for every point. That is, two points lie in the same orbit of the modular group $\Mod$ if and only if the tails of their CF digit sequences agree. However, for the A.~Hurwitz complex CFs, Lakein \cite{Lakein} provides an explicit counterexample to tail-equivalence.\footnote{Lakein's counterexample makes use of an element not belonging to $\Mod$, but this can be remedied by multiplying his choice of $A$ by $\ii$.} Thus, one would not expect for geodesic coding to be available in this case. Geodesic marking, on the other hand, avoids certain points which exhibit pathological behavior, those with all small CF digits. This allows for an ergodicity result and leads to the following \emph{a.e.}~tail equivalence result for Iwasawa CFs (proven as Theorem \ref{thm:tail}), which is novel for all higher-dimensional algorithms including \emph{folded} A. Hurwitz complex CFs: 

\begin{thm}
\label{thm:IntroTail}
Almost surely, two points in a complete, discrete, and proper Iwasawa CF are tail-equivalent if and only if they are $\Mod$-translates of one another.
\end{thm}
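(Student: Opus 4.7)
The plan is to upgrade the geodesic marking framework introduced for Theorem~\ref{thm:main} into a dictionary between tails of CF expansions and projected geodesics in the associated finite-volume hyperbolic quotient $\Mod\backslash\Hyp$. Under this dictionary, two $\Mod$-translated boundary points should correspond to vertical geodesics whose projections eventually coincide, forcing eventual agreement of marking sequences and hence of CF digits; conversely, tail-agreement of digits for markable points should lift to a coincidence of projected rays and thereby produce an explicit $\Mod$-element implementing the translation. Since by Theorem~\ref{thm:main} almost every boundary point gives rise to a markable, equidistributing geodesic, the ``almost surely'' qualifier lets us freely discard the null sets where marking degenerates.

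First I would handle the direction $\Mod$-translate $\Rightarrow$ tail-equivalent. Given $y = \gamma\cdot x$ with $\gamma\in\Mod$, the vertical rays from $\infty$ down to $x$ and down to $y$ are $\gamma$-related in $\Hyp$, hence have identical image in $\Mod\backslash\Hyp$. By the construction of the marking sequence in \S\ref{sec:markable}, the symbols attached to a geodesic depend only on the projected geodesic from some finite time onwards. So the marking sequences of the two rays agree after a bounded initial segment, which by the coding/CF correspondence translates into tail-equivalence of the two digit sequences. The only place almost-sure-ness enters here is to discard the measure-zero set of points whose rays fail to be markable.

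For the converse, take $x,y$ in the full-measure markable locus and suppose their Iwasawa CF digits coincide from indices $m$ and $n$ on. Writing $x' = T^m x$ and $y' = T^n y$ for the Gauss map $T$, the rays from $\infty$ to $x'$ and to $y'$ are markable and carry the same marking sequence. Marking determines the projected geodesic in $\Mod\backslash\Hyp$ (this is the crucial reconstruction statement we extract from \S\ref{sec:markable}), so the two rays project to the same ray downstairs and therefore differ by some $\gamma\in\Mod$. Since both rays emanate from the cusp $\infty$ and $\gamma$ is an isometry fixing the cusp setwise, $\gamma$ sends $x'$ to $y'$. Pre- and post-composing $\gamma$ with the CF-matrices that realize $x\mapsto x'$ and $y\mapsto y'$ yields the required element of $\Mod$ sending $x$ to $y$.

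The main obstacle is the uniqueness step: one must show that, outside a null set, the marking sequence determines the projected geodesic rather than merely an asymptotic class of geodesics. This is where discreteness and properness are essential, ruling out the ambiguity that would arise from non-isolated or infinite-multiplicity cylinders; the complementary non-markable set is controlled by the ergodicity of the geodesic flow on the finite-volume quotient $\Mod\backslash\Hyp$, which also ensures that the exceptional set of pairs for which the easy direction breaks down is null. The remaining bookkeeping is to match marking-shifts with Gauss-map iterates so that the recovered $\gamma$ really implements the boundary translation, which is a routine unwinding of the CF-to-matrix correspondence.
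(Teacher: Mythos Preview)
Your argument for the hard direction ($\Mod$-translate $\Rightarrow$ tail-equivalent) has a genuine gap. You assert that the vertical rays from $\infty$ to $x$ and from $\infty$ to $y=\gamma x$ are $\gamma$-related in $\Hyp$ and hence have identical image in $\Mod\backslash\Hyp$. But $\gamma$ applied to the geodesic $(\infty,x)$ is the geodesic $(\gamma\infty,\gamma x)=(\gamma\infty,y)$, which coincides with the vertical ray $(\infty,y)$ only when $\gamma\infty=\infty$. By completeness that forces $\gamma\in\Zee$, and since $x,y\in K$ that forces $x=y$. So for any nontrivial $\gamma$ the two vertical rays are \emph{not} $\Mod$-equivalent, their projections to $\Mod\backslash\Hyp$ are distinct, and nothing forces their marking sequences to agree. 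The whole argument rests on this false identification.

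The paper avoids this by \emph{not} using vertical rays. One takes a markable geodesic $\gamma$ with generic backward endpoint $\gamma_-\neq\infty$ and forward endpoint $\gamma_+=x$; then $\gamma'=M\gamma$ genuinely projects to the same geodesic in $\Mod\backslash\Hyp$ and has $\gamma'_+=y$. The remaining issue is that $\gamma'$ need not pass through $\CW$ itself (it passes through $M\CW$), so one cannot directly read off its marking. The paper handles this by the exile-type estimates of Lemma~\ref{lemma:horocap} and Lemma~\ref{lemma:exhile}: a bounded number of Gauss iterations on $\gamma'_+$ brings a translate of $\gamma'$ back into $\W$ and then into $\CW$, at which point the Intersection Detection property of Theorem~\ref{thm:markable} identifies the shift and yields tail-equivalence.

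For the converse direction (tail-equivalent $\Rightarrow$ $\Mod$-translate) you are working much too hard, and the ``marking determines the projected geodesic'' reconstruction step you flag as the main obstacle is neither available nor needed. If $a_{m+i}=a'_{n+i}$ for all $i\geq 1$, then $T^m x$ and $T^n y$ have identical CF expansions; by convergence (Theorem~\ref{thm:convergence}) this gives $T^m x=T^n y$ for a.e.\ $x,y$, whence $y=M'_n M_m^{-1}x\in\Mod x$ directly from the convergent mappings. No geodesics are required.
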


The question of tail-equivalence is being actively researched even for one-dimensional CFs, see \cite{BM,Panti}. The importance of small digits versus large digits to tail-equivalence has been noted before in \cite{NN}.

\subsection{Key examples of CF algorithms}
\label{subsec:landscape}
We now describe a number of well-known variants of continued fractions, primarily in the one-dimensional case, that are of interest to us.
We will discuss the algorithms in an increasing order of complexity (see Figure \ref{fig:algorithms} for a diagram), pointing out the variations that motivate the definition of Iwasawa CFs: namely, the choice of underlying space, inversion, digit sequence, and fundamental domain for the corresponding lattice; as well as the definitions of properness, completeness, and discreteness. A more thorough discussion of the class of Iwasawa CFs is provided in \S \ref{sec:IwasawaCF}, along with a more complete list of known Iwasawa CF algorithms in Table \ref{tab:examples}. 

\begin{figure}
    \centering
    \includegraphics[width=.75\textwidth]{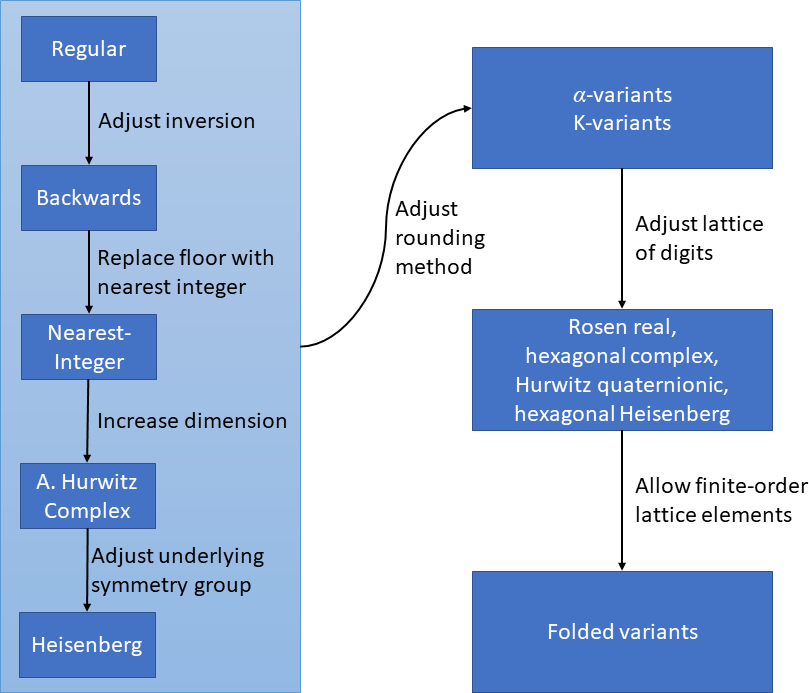}
    \caption{Different Iwasawa CF algorithms can be thought of as variations on the regular CF algorithm: the equations for the shift map, digit extraction algorithm, and recombination algorithm remain the same, while the underlying data is adjusted.}
    \label{fig:algorithms}
\end{figure}

\subsubsection{Regular CFs}
\label{subsubsec:regular}
The \emph{regular continued fraction} representation of a number $x\in [0,1)$ represents it as a limit
$$x=\cfrac{1}{a_1+\cfrac{1}{a_2+\ldots}}$$
where $a_i\in \mathbb{N}$.\footnote{In the introduction, we ignore the behavior of points with finite CF expansion for simplicity.} The digits $a_i$ are extracted from $x$ by repeated applications of the Gauss map $T(x)=1/x-\lfloor 1/x\rfloor$:
$$a_i = \left\lfloor \frac{1}{T^{i-1}x} \right\rfloor.$$
The Gauss map is famously ergodic with an invariant measure given by the density $\frac{1}{\log 2}\frac{1}{1+x}$. (See \cite{DKbook} for a fuller treatment.)

In the framework of Iwasawa CFs, regular CFs are described using the following data:
\begin{enumerate}
    \item the underlying space $X$ is $\R$,
    \item the inversion used is $\iota(x)=1/x$,
    \item the allowed digits are elements of the lattice $\Zee=\Z$,
    \item the set of ``fractional points'' is $K=[0,1)$, which tiles $\R$ under integer translations. 
\end{enumerate}

As we show below, many standard and novel algorithms can be described by adjusting the above data and leaving the formulas above essentially unchanged.

\subsubsection{Backwards CFs}\label{subsec:backwardsCF}
The \emph{backwards CF} (sometimes called R\'enyi CF) reverses the domain of the Gauss map to produce the R\'enyi map $$T_R(x)=T_G(1-x)=\frac{1}{1-x}-\left\lfloor \frac{1}{1-x}\right\rfloor.$$
The CF digits of $x\in [0,1)$ are then extracted in an analogous manner to the one used for standard CFs, via
\begin{equation}
\label{eq:backwardsdigits}
a_i = \left\lfloor \frac{1}{1-T^{i-1}x} \right\rfloor,
\end{equation}
and recombined as
$$x=1-\cfrac{1}{a_1+1-\cfrac{1}{a_2+\ldots}}.$$

The shift map $T_R$ is ergodic, but due to the presence of indifferent fixed points, the corresponding invariant measure is infinite  \cite{AF84}.

With a small adjustment, backwards CFs fit into the framework of Iwasawa CFs, as follows.

The mapping $x\mapsto (1-x)$ conjugates backwards CFs to an equivalent system known as  the $D$-backwards CF with $D=[0,1)$, see  Masarotto \cite{Masarotto}. The resulting shift  map is then given by $T_D(x)=\frac{-1}{x}-\lfloor \frac{-1}{x}\rfloor$.
Adjusting Masorotto's notation by using negative integer digits $a_i<-1$ we take
\begin{equation}
\label{eq:backwardsdigitsright}
    a_i = \left\lfloor \frac{-1}{T^{i-1}x} \right\rfloor.
\end{equation}
and recombine the digits as
$$x=\cfrac{-1}{a_1+\cfrac{-1}{a_2+\ldots}}.$$

All three CF algorithms discussed so far are real algorithms looking at points in $[0,1]$ which use integers for their digits. The only difference between them is the choice of inversion: $x\mapsto \frac{1}{x}$ for regular CFs, $x\mapsto \frac{1}{1-x}$ for backwards CFs, and $x\mapsto \frac{-1}{x}$ for $D$-backwards CFs. 

In the Iwasawa CF formalism, we will assume that inversions send $0$ to $\infty$ and preserve the unit circle. While the backwards CF algorithm a priori doesn't fit this requirement, the conjugate $D$-backwards system is an Iwasawa CF.

We will make use of both of the allowed inversions $\iota_+(x)=1/x$ and $\iota_-(x)=-1/x$ throughout the paper. 

Interestingly, backwards continued fractions are the more natural system within the framework of Iwasawa CFs. The inversion $\iota_-$ is an orientation-preserving linear-fractional mapping, and is an element of the modular group $PSL(2,\Z)$, while $\iota_+$ is orientation-reversing, which forces us to consider the larger group $PGL(2,\Z)$. This leads us to the question of \emph{completeness} of the digit set, see \S \ref{subsubsec:folded}.

\subsubsection{Nearest-integer and $\alpha$-type CFs}
\label{subsubsec:nearest}

The next set of CF algorithms adjusts the set of ``fractional'' points and the corresponding rounding method, while also allowing variation in the choice of inversion.

A nearest-integer CF replaces the unit interval $[0,1)$ with the interval $[-1/2, 1/2)$, and the floor function $\lfloor \cdot \rfloor$ with the nearest-integer mapping $[\cdot]$.  There are three standard systems known as nearest-integer CFs, of which two fit directly into the Iwasawa CF framework, and the third is semi-conjugate to an Iwasawa CF. The first two systems are constructed by  choosing the inversion function $\iota$ to be either $\iota_+(x)=1/x$ or $\iota_-(x)=-1/x$. The corresponding shift map is given by $T(x)=\iota(x)-[\iota(x)]$, and one has the digits $a_i=[\iota(T^{i-1}x)]$, which are still integers. The third system is based on the shift map $T(x)=\norm{1/x}-[\norm{1/x}]$ and a more complicated system of digits, which we will discuss more extensively in \ref{subsubsec:folded}. Due to non-injectivity of the mapping $x\mapsto [\norm{1/x}]$, this third variant does not fit the Iwasawa CF framework.

The $\alpha$-type CFs, with $\alpha\in[0,1]$, form a family of CF algorithms that interpolate between regular and nearest-integer CFs by
operating with the interval $[-\alpha,1-\alpha)$ and the corresponding rounding function $[x]_\alpha=[x+\alpha]$. The forward shift is given by $T(x)=\iota(x)-[\iota(x)]_\alpha$, where $\iota$ is chosen from $\iota_+$, $\iota=\iota_-$, or $x\mapsto \norm{1/x}$. As above, the first two choices fit the Iwasawa CF framework, while the third variant does not. 
All three families of systems are known to be ergodic for all $\alpha\in[0,1]$:
\begin{itemize}
    \item Ergodicity of the $\iota_+$ variant for $\alpha\notin\{0,1\}$ follows from our results and for $\alpha\in[0,1]$ was simultaneously shown by \cite{NS},
    \item Ergodicity of the $\iota_-$ variant for $\alpha\notin\{0,1\}$ is new in this paper (cf.~\cite{AS}). The cases $\alpha=0$ and $\alpha=1$ are also ergodic, since  $\alpha=0$ gives the backwards CF and  $\alpha=1$ gives a system that is conjugate to the regular CF.
    \item Ergodicity of the $x\mapsto\norm{1/x}$ variant was recently proven in \cite{NS}.
\end{itemize}

Generalizing further, one can replace the unit interval with any measurable set $K$ that tiles $\R$ under integer translations and write $T(x)=\iota(x)-[\iota(x)]_K$ where $[x]_K$ denotes the unique integer satisfying $x-[x]_K \in K$. Such systems fall under the framework of Iwasawa CFs. Our results imply that the CF is convergent and the shift map is ergodic as long as $K$ is proper: that is, the closure of $K$ is contained in the open unit ball $(-1,1)$. Note that regular and backward CFs are not proper, but are nonetheless convergent and ergodic.

\subsubsection{Folded CFs}
\label{subsubsec:folded}
We now discuss in more detail the nearest-integer system based on the shift map $T(x)=\norm{1/x}-[\norm{1/x}]$. Because the mapping $x\mapsto \norm{1/x}$ is 2-to-1, the standard approach is to keep track both of the integer digit and the choice made when taking the absolute value
\(b_i = [\norm{1/x}], \hspace{2in} c_i=\text{sign}(x),
\)
so that one reconstructs 
\(
x = \cfrac{c_1}{b_1+\cfrac{c_2}{b_2+\cdots}}.
\)

To maintain similarity to previous algorithms, we combine the integer digit $b_i$ and the sign $c_i$ into a single datum, namely the linear mapping $a_i(x)=c_i(x+b_i)$. This allows us to rewrite the fraction in the format
\( x = \cfrac{1}{a_1\left(\cfrac{1}{a_2(\cdots)}\right)} = \lim_{n\rightarrow \infty}\iota_+a_1\iota_+a_2\cdots\iota_+a_n(0),
\)
where each $a_i$ is now a function and we take the convention of suppressing parentheses and composition signs.

We thus transition from thinking of digits as elements of $\Z$ to thinking of them as automorphisms of $\R$. In the Iwasawa CF framework, we will assume that these automorphisms are isometries of the underlying space, which is indeed the case here. 

Therefore, for the algorithm under discussion, we are  now interested in digits in the expanded lattice $\Zee$ generated by integer translations and negation, i.e., $\Zee=\langle x\mapsto x+1, x\mapsto -x\rangle$.

Inconveniently, moving the set of ``fractional'' points $K=[-1/2, 1/2)$ around by the group $\Zee$ causes overlaps, and we therefore exclude this CF variant from the class of Iwasawa CFs. 

Adjusting to the interval under consideration to $K=[0,1/2]$ provides a non-overlapping tiling of $\R$ (that is, $K$ is a fundamental domain for the action of $\Zee$), giving the \emph{folded CF} (see Marmi-Moussa-Yoccoz \cite{MMY}) that now does fit in the Iwasawa CF framework.

The folded CF algorithm is defined by the following data:
\begin{enumerate}
    \item the underlying space $X$ is $\R$,
    \item the inversion used is $\iota_+(x)=1/x$,
    \item the group $\Zee$ of allowed digits is  generated by $x\mapsto x+1$ and $x\mapsto -x$,
    \item the set of ``fractional points'' is $K=[0,1/2)$, which tiles $\R$ under the action of $\Zee$. 
\end{enumerate}

Given this data, we obtain a rounding function $x\mapsto [x]\in \Zee$ that now provides the unique linear mapping  $[x]\in \Zee$ combining an integer translation and possibly a negation such that $[x]^{-1}(x) \in [0,1/2)$. For example, we have that $[5.1](x)=5+x$ and $[5.1]^{-1}(x) = x-5$, while $[5.7](x)=-(x-6)$ and $[5.7]^{-1}(x) = -(x-6)$. 

For a point $x\in [0,1/2)$, we can then write the forward shift map as $T(x)=[1/x]^{-1}(1/x)$ and extract the digits as $a_i = [1/T^{i-1}(x)]$. The point $x$ is  reconstructed from the digits by writing $x = \lim_{n\rightarrow \infty} \cfrac{1}{a_1\left(\cfrac{1}{a_2(\cdots a_n(0))}\right)}$, or more compactly as $x=\lim_{n\rightarrow \infty} a_1 \iota a_2  \iota \cdots a_n(0)$.

The absolute value mapping from $(-1/2,1/2)$ to $(0,1/2)$ then provides a semiconjugacy between the $\norm{\cdot}$-based nearest-integer fractions and the folded CFs. Ergodicity passes down (but not up!) through semiconjugacy, so folded CFs are ergodic. See Marmi-Moussa-Yoccoz \cite{MMY} for the corresponding invariant measure.

While Marmi-Moussa-Yoccoz do describe folded variants of all $\alpha$-CFs, it is only the nearest-integer variant $\alpha=1/2$ that fits within the Iwasawa CF framework, since the other systems continue to operate with fractional sets $K$ that are not fundamental domains for any relevant lattice.

As it turns out, the folded CFs also arise naturally from the regular CF construction, where we have $\Zee=\Z$ and $\iota=\iota_+$. Since the shift map combines both elements of $\Zee$ and the mapping $\iota$, analysis of the shift map revolves around understanding the group $\Mod = \langle \Z, \iota_+\rangle$. The group $\Mod$ includes the negation mapping $x\mapsto -x$ since 
\[\label{eq:invexample}\dfrac{1}{1+\dfrac{1}{-1+\dfrac{1}{1+x}}}=-x,\]
so that the subgroup $\Zee'\subset \Mod$ of linear transformations (that is, the stabilizer of $\infty$) is the group $\ZeePrime=\langle x\mapsto x+1, x\mapsto -x\rangle$. We thus have that the group of allowed digits $\Zee=\Z$ is smaller than the natural group $\ZeePrime$ of linear transformations, giving what we call an \emph{incomplete} system. Expanding the set of digits to $\langle \Z, x\mapsto -x\rangle$ while also contracting the fundamental domain to $[0,1/2)$ provides a completion of the system, again giving us the folded fractions.

\subsubsection{Rosen CFs}
We finish the discussion of 1-dimensional CFs with the Rosen CFs, whose definition is motivated by connections to hyperbolic geometry of triangle groups. 

To define Rosen CFs, one takes the group $\Zee$ of allowed digits to be $(2 \cos \frac{\pi}{q}) \Z$, and the set of ``fractional points'' to be $K=[-\cos \frac{\pi}{q}, \cos \frac{\pi}{q})$.

Together with the inversion $\iota_-$, the lattice $\Zee$ generates a Hecke group, which acts discretely on the hyperbolic plane (with the case $q=2$ reducing to the modular group $PSL(2,\Z)$). 

From here, the choice of $\iota=\iota_-$ (as used by \cite{MS})
would provide an Iwasawa CF algorithm; and our results imply that the corresponding shift map is ergodic. We emphasize that the discreteness of $\Mod=\langle \Zee, \iota_-\rangle$ within the isometry group of hyperbolic space plays a key role in our proof, and that other choices of multiplier in front of $\Zee$ would yield badly-behaved systems.

Lastly, we note that Rosen's original CF algorithm instead is based on the mapping $x\mapsto \norm{1/x}$, and is shown to be ergodic (in fact, weak Bernoulli) in \cite{BKS}. This algorithm is not encompassed by the Iwasawa CF framework.

\subsubsection{Complex CFs}\label{subsubsection:ComplexCFs}

We now briefly touch on higher-dimensional CFs, in the planar case. For more higher-dimensional CFs, including quaternionic and Heisenberg CFs, see the discussion in \S \ref{sec:IwasawaCF}.

Our primary example is the A.~Hurwitz complex CF, first defined in \cite{Hurwitz}. It is described by the following Iwasawa CF data:
\begin{enumerate}
    \item the underlying space $X$ is $\C$,
    \item the inversion used is $\iota(z)=1/z$,
    \item the group $\Zee$ of allowed digits is the group of Gaussian integers, $\Z[\ii]$,
    \item the specified fundamental domain $K$ of $\Zee$ is the unit square centered at the origin.
\end{enumerate}
Thus, the shift map is given by $T(z)=1/z - [1/z]$ where $[\cdot]$ finds the nearest Gaussian integer; the digits are extracted via $a_i = [1/T^{i-1}z]$, and reconstructed as 
\(
z=\cfrac{1}{a_1+\cfrac{1}{a_2+\cdots}}
\)
It is common to write the system in real coordinates, with corresponding data:
\begin{enumerate}
    \item the underlying space $X$ is $\R^2$,
    \item the inversion used is $\iota(x,y)=\frac{(x,-y)}{x^2+y^2},$ (we will denote such conjugate-reflections by $\iota_c$),
    \item the group $\Zee$ of allowed digits is the group $\Z^2$,
    \item the specified fundamental domain $K$ of $\Zee$ is the unit cube centered at the origin, i.e.~$[-1/2,1/2)\times [-1/2,1/2)$.
\end{enumerate}
Both the real and complex descriptions of the Hurwitz CF are quite natural: the mappings $\iota$ and $\Z^2$ both lift to isometries of \emph{real} hyperbolic 3-space, while the corresponding modular group $\Mod=\langle \Zee, \iota\rangle$ is shown to be discrete by embedding into $PSL(2,\Z[\ii])$, see Proposition \ref{prop:firstcomplete}.

Ergodicity of the Hurtwitz CF was shown by Nakada in \cite{NakadaErgodic} (cf, \cite{Hensley}).

As in the case of $\iota_+$ real CFs, the system is not complete, since the stabilizer of $\infty$ in $\Mod$ contains the unexpected mapping $z\mapsto -z$:   
\(\dfrac{1}{1+\dfrac{1}{-1+\dfrac{1}{1+z}}}=-z.\)
As in the case of real folded fractions, we can create a folded variant by extending $\Zee$ to include negation and reducing $K$ correspondingly. For example, one could take $K=[-1/2,1/2)\times [0,1/2)$. In general, we will call a CF algorithm a \emph{folded variant} if $\Zee$ is expanded to the stabilizer of $\infty$ in $\Mod$, and $K$ is similarly reduced.

One can likewise create $\alpha$-type variants by shifting the location of the fundamental domain, i.e. replacing $K$ with $K+\alpha$; or create more exotic variants by choosing a different fundamental domain entirely, e.g., by choosing a tetromino to create the Tetris CFs. However, it is not the case that we can arbitrarily shift folded variants. For example, the set $[-1/2,1/2)\times [-1/4,1/4)$ is not a fundamental domain for the group $\langle \Z[\ii], (x,y)\mapsto (-x,-y)\rangle$.

See Figure \ref{fig:hurwitz} for illustrations of these algorithms and some of their cylinder sets. The finite range condition appears to fail in these cases. We recover ergodicity for folded variants. For centrally-symmetric systems like the Tetris variant, we are able to bound the number of ergodic components by 2. The $\alpha$-variant with $\alpha=0.3$ shown in the figure is not complete and not centrally symmetric with respect to $z\mapsto -z$, and thus none of the results of this paper apply to it.

\begin{figure}[ht]
\begin{subfigure}[b]{0.4\textwidth}\includegraphics[width=\textwidth]{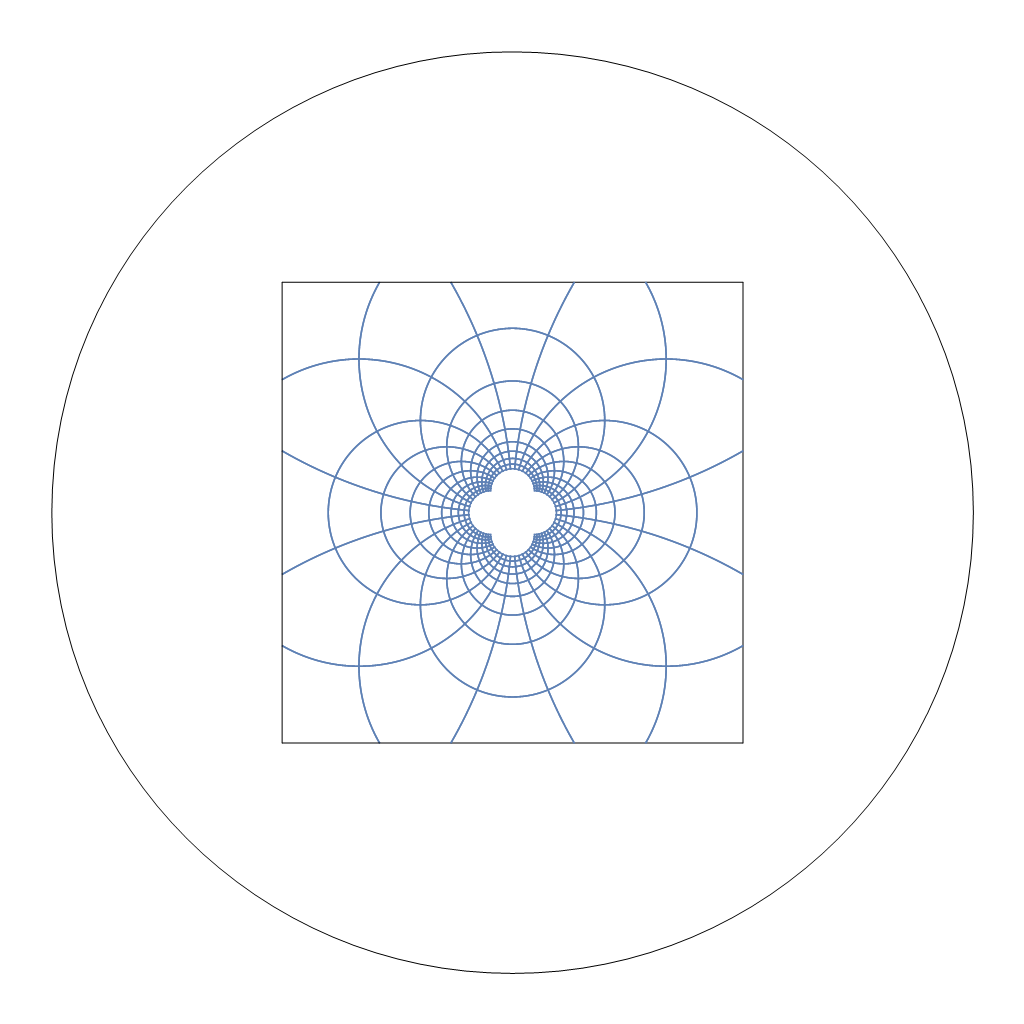}\caption{Hurwitz CF}\end{subfigure}
\begin{subfigure}[b]{0.4\textwidth}\includegraphics[width=\textwidth]{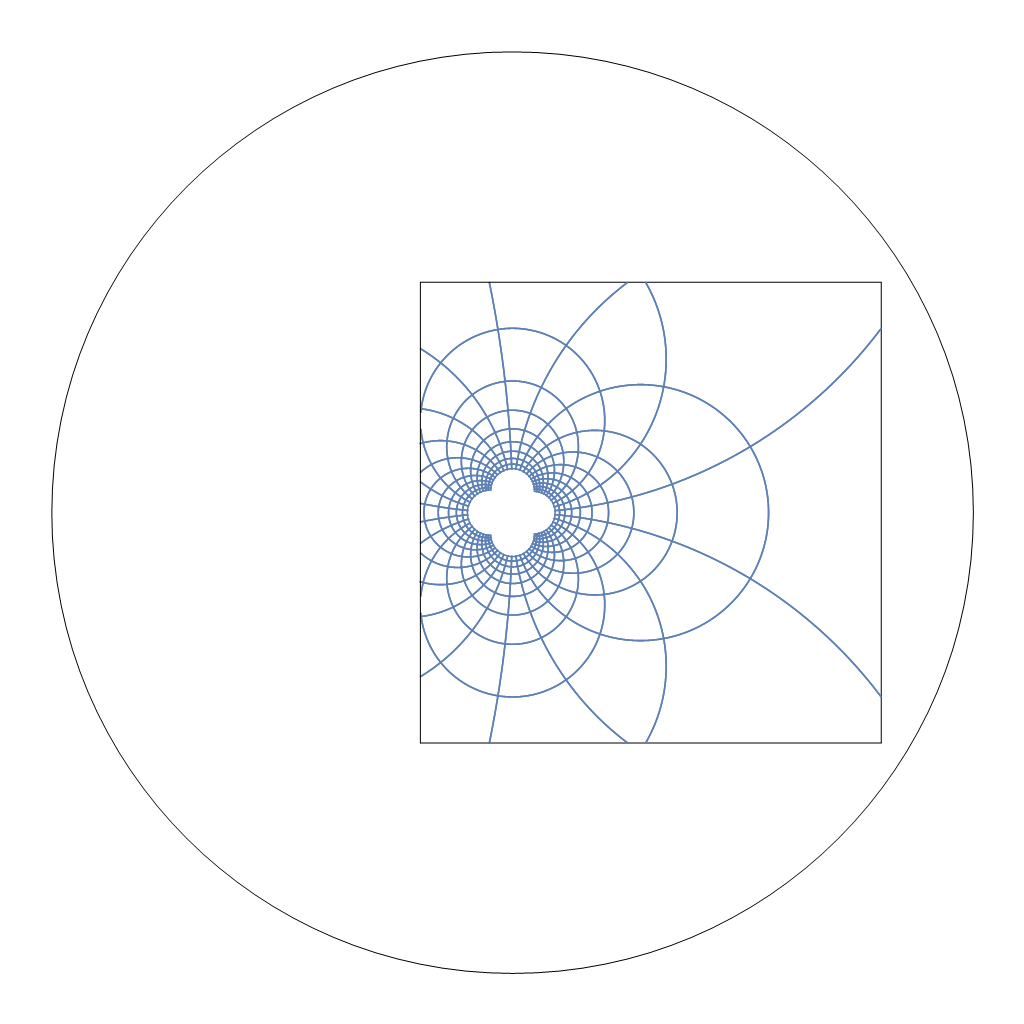}\caption{$\alpha$-variant for $\alpha$=0.3}\end{subfigure}\\
\begin{subfigure}[b]{0.4\textwidth}\includegraphics[width=\textwidth]{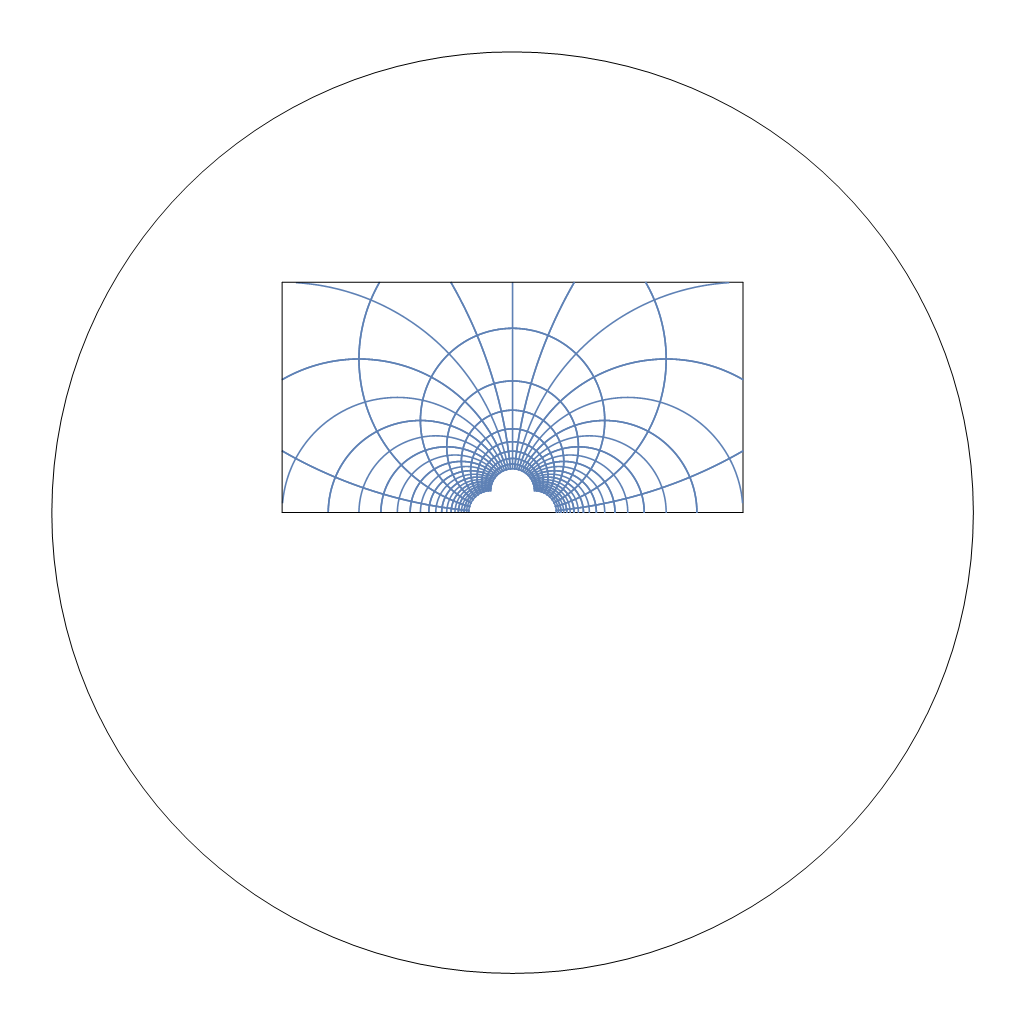}\caption{Folded variant}\end{subfigure}
\begin{subfigure}[b]{0.4\textwidth}\includegraphics[width=\textwidth]{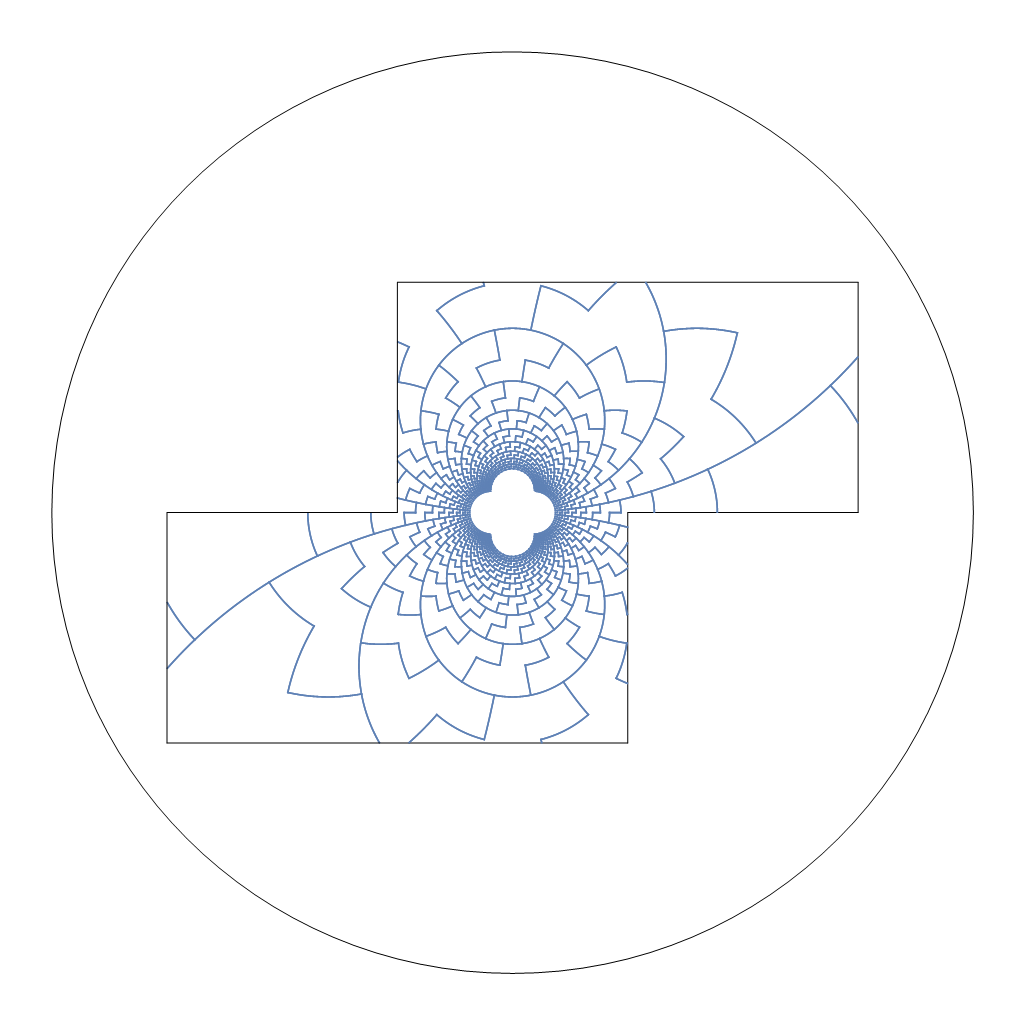}\caption{Tetris variant}\end{subfigure}
\caption{Four variants of the Hurwitz complex CF algorithm. The fundamental domain $K$ in each case is displayed inside the unit circle (fixed by the inversion $\iota_c$), and is decomposed into rank-$1$ cylinder sets. The lattice $\Zee=\Z^2$ is extended by the rotation $(x,y)\mapsto(-x,-y)$ in the folded variant.}
\label{fig:hurwitz}
\end{figure}

\subsection{Theorem \ref{thm:main} in a special case}\label{subsec:condensed proof}

We now outline our proof of ergodicity in the case of nearest-integer CFs with inversion $\iota_-$, where some simplifications are possible (cf.~Remark \ref{rmk:what goes wrong}). Ergodicity is certainly not new in the nearest-integer case, and connections to geodesic flow have been used since at least the work of Adler-Flatto \cite{AF84}.\footnote{From a historical perspective, using ergodicity of geodesic flow to prove the ergodicity of a CF alogrithm is backwards. The ergodicity of regular CFs was shown first, and the ergodicity of geodesic flow in the modular surface was proven using this \cite{Artin,Hedlund}.} For a more thorough treatment of these techniques in the regular CF case, we recommend \cite[\S 9.6]{EW}.

We start by viewing $\mathbb{R}$ as the real axis in $\C$, and interpret the upper half-plane as the hyperbolic plane $\Hyp^2_\R$. Both the integer shifts $\Zee$ and the inversion $\iota$ on $\R$ extend to the half-plane, where they now act by isometries. The modular group $\Mod$ generated by $\Zee$ and $\iota$ acts on $\Hyp^2_\R$ discretely, and gives rise to a tiling of the space by translates of the tile $\mathcal T$ bounded by the vertical lines $x=\pm 1/2$ and the unit circle $\Sph$. Notably, each of the lines $x=\pm 1/2$ are equal to $M\Sph$ for an appropriate $M\in \Mod$. We will study hyperbolic geodesics $\gamma$, which takes the form of either a vertical line or a semi-circle that intersects $\mathbb{R}$ at right angles. 

We will derive ergodicity for the CF shift map from the ergodicity of the geodesic flow on the modular surface $\Mod \backslash \Hyp^2_\R$, which we can think of as the tile $\mathcal T$ with ``opposite sides'' identified. That is, the sides $x=\pm 1/2$ are identified by the translation $z\mapsto z+1$, and the two halves of the circular arc at the bottom are identified via $z\mapsto -1/z$. By Mautner's Theorem \ref{thm:Mautner}, geodesic flow in $\Hyp^2_\R$ is ergodic. In particular, a generic geodesic $\gamma$ is dense in $\Mod \backslash \Hyp^2_\R$, see Figure \ref{fig:geodesic and projection}.

It appears to be intuitively clear that, for a geodesic $\gamma \subset \Hyp$, the continued fraction expansion of the forward endpoint $\gamma_+\in \R$ can be immediately read off from the sequence of tiles that $\gamma$ traverses in $\Hyp^2$, or, equivalently, from the sequence of elements of $\Mod$ that are used to normalize it back to the starting tile. Indeed, it appears that the inversion corresponds to $\gamma$ crossing $\Sph$ and the digits count the number of vertical lines crossed before returning to $\Sph$ after an inversion. Our goal will be to formalize this relationship in sufficient detail to prove the ergodicity of the shift map $T$ from the ergodicity of the geodesic flow on $\Mod\backslash\Hyp^2_\R$, doing so without relying on two-dimensional geometry, which has been central to previous approaches. 

Let $\gamma$ be a vertical geodesic as in Figure \ref{fig:geodesic and projection}. Let $a_1=[-1/\gamma_+]$ be the first nearest-integer CF digit of $\gamma_+$ and $M_1^{-1}(z)=-1/z-a_1$ the corresponding element of $PSL(2,\Z)$ enacting the nearest-integer CF shift $T(\gamma_+)$. Applying $M_1^{-1}$ to all of $\gamma$, we obtain Figure \ref{fig:building section}a. We denote the natural elements of $PSL(2,\Z)$ enacting $T^i$ by $M_i^{-1}$.

\begin{figure}[ht]
\includegraphics[width=.95\textwidth]{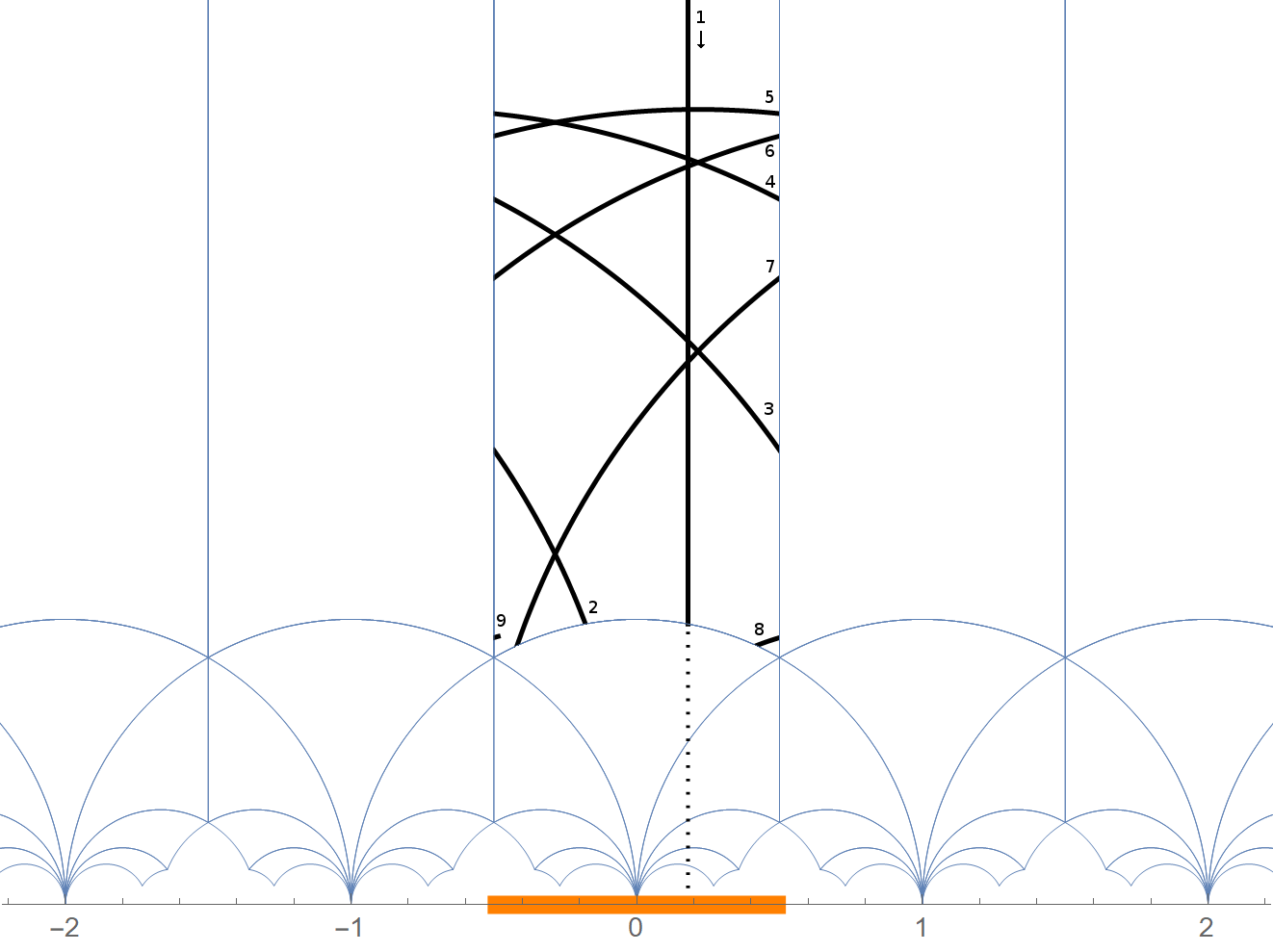}
\caption{The wall-crossings of the vertical geodesic $x=0.1795$ can be used to renormalize it to always stay within the fundamental domain for $\Mod$.}
\label{fig:geodesic and projection}
\end{figure}

Consider now the subsegment $\gamma'$ of $\gamma$ strictly between the intersection with $\Sph$ and before the intersection with $M_1\Sph$.  The intersection of $\gamma$ with $M_1\Sph$ can be used to recover the first CF digit of $\gamma_+$, since we have that  $M_1^{-1}(z)=-1/z+a_1$. However,  intersections of $\gamma'$ with other $\Mod$-translates of $\Sph$ do not correspond to digits of $\gamma_+$. We wish to find a subset of $\Sph$ for $\gamma$ to intersect with that does \emph{not} detect these ``spurious'' intersections of $\gamma'$ seen in Figure \ref{fig:building section}a, but does continue to detect (most of) the crossings of $\gamma$ with $M_i\Sph$. In this way intersections of $\gamma$ with $\Mod\Sph$ correspond strongly to iterations of the shift map $T^i$ on $\gamma_+$, and so the behavior of  geodesic flow will strongly correlate to the behavior of the shift map $T$. We will do this in the unit tangent bundle of $\Hyp^2_\R$, by restricting the allowed unit vectors over $\Sph$. The process is summarized in Figure \ref{fig:building section}.

We first quickly prove that $M_1^{-1}\gamma$ in fact crosses $\Sph$. Indeed, we have that $M_1^{-1}\gamma_+$ is inside $\Sph$, while $\norm{a_i}\geq 2$, so that $M_1^{-1}(\gamma_-)=M_1^{-1}(\infty)=-a_i$ is outside of $\Sph$. While this bound appears to deteriorate to $\norm{M_i^{-1}\gamma_-}\geq 1$ with additional iterations, by looking at the permissible digits one shows that $\norm{M_i^{-1}\gamma_-}$ is bounded below by the golden ratio $\phi$. See Remark \ref{rmk:what goes wrong} for the more general approach to this step. 

In Figure \ref{fig:building section}a, we see that $\gamma'$ has (at least) two intersections occur that we do \emph{not} want to code: the intersection with the sphere centered at the point $(1,0)$, and the intersection with the vertical line $x=1.5$. The first of these is avoided simply by restricting to the vectors in $T^1\Sph$ that point towards $K=[-1/2,1/2)$ (i.e., whose corresponding geodesics terminate in $K$). By completeness, any (non-identity) $\Zee$-translate of these vectors must land outside of $K$. In particular, the corresponding vectors on the sphere centered at $(1,0)$ point to the interval $[1/2,3/2)$, whereas we know that $M_1^{-1}\gamma_+\in K$. Thus this  intersection is avoided.

The second spurious intersection in our example requires more work, and we rule it out by making two observations about $M_1^{-1}\gamma'$ that are predicated on the use of horoheight and horoballs (shown in green in Figure \ref{fig:building section}, cf.~Ford circles). We may measure horoheight either from $\infty$, in which case horoheight is simply the $y$ coordinate and a horoball is a  set of the form $\{(x,y):y>y_0\}$, or from a (rational) point on the $x$-axis, in which case horoheight can be thought of as depth into the corresponding cusp, and horoballs appear as Euclidean disks tangent to the $x$-axis. For our first observation, the fact that $\norm{M^{-1}_1\gamma_+}\le 1/2$ and $\norm{M^{-1}_1 \gamma_-} \ge \phi$  implies that the intersection of $M^{-1}_1\gamma'$ with $\Sph$ occurs away from the $x$-axis, in the smaller ``wall'' region (see Figure \ref{fig:building section}b) $$\W=\left\{z\in \Sph \mst \Im(z)>\sqrt{\frac{3}{2}(5\sqrt{5}-11)}\approx 0.52\right\},$$ and that the intersection with $M^{-1}_1\Sph$ is likewise bounded away from the $x$-axis. That is, $M_1^{-1}\gamma'$ is contained in a horoball $\mathcal B=\{y>\epsilon\}$ for some $\epsilon>0$. For our second observation, consider a mapping $M\in \Mod$ that sends the line $x=1.5$ to $\Sph$. Normalizing $M_1^{-1}\gamma'$ further by $M$, we see (Figure \ref{fig:building section}c) that $MM_1^{-1}\gamma' \subset M \mathcal B$ is now contained in one of the horoballs based at a finite rational point. In particular, this provides (see Corollary \ref{cor:h0}) an upper bound $c=\left(\frac{3}{2}(5\sqrt{5}-11)\right)^{-1/2}\approx 1.923$ on how far $MM_1^{-1}\gamma'$ travels away from the $x$-axis. We can reject the intersection of $MM_1^{-1}\gamma'$ with $\Sph$ (and thus the intersection of $M_1^{-1}\gamma'$ with the line $x=1.5$) by restricting to the vectors in $T^1\W$ that are returning from a cusp excursion towards $\infty$ of depth at least $c$. We will denote by $\CW$ the set of such vectors that also point towards $K$. This is our desired refinement of $\Sph$ (Figure \ref{fig:building section}d).

\begin{figure}[ht]
\begin{subfigure}[b]{0.45\textwidth}\includegraphics[width=\textwidth]{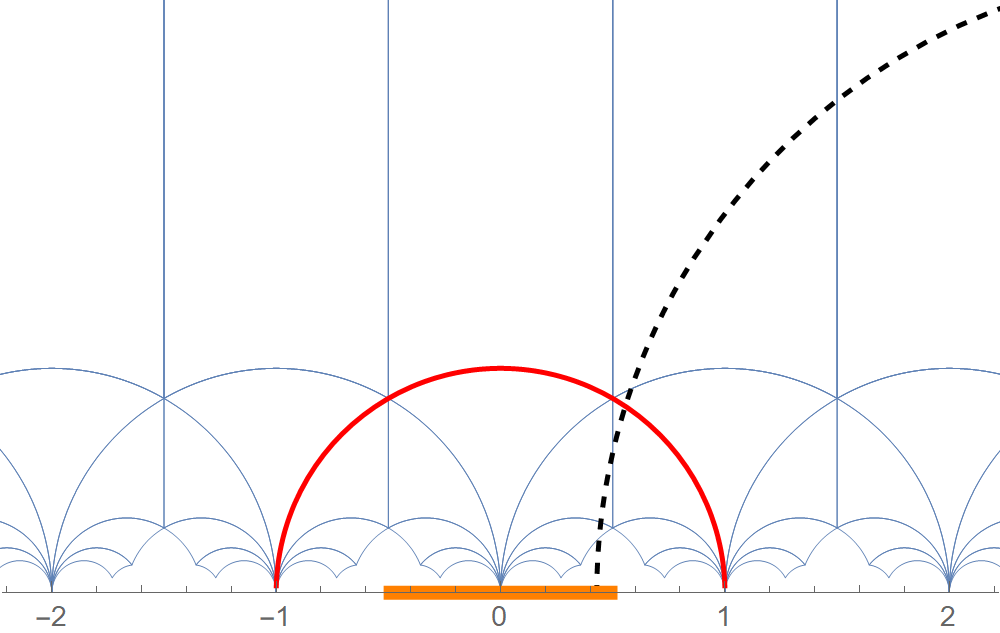}\caption{The geodesic $M_1^{-1}\gamma$ (dashed) intersects the sphere $\Sph$ (red). The segment $M^{-1}_1\gamma'$ has spurious intersections with $\Mod$-translates of $\Sph$, e.g. the line $x=1.5$.}\end{subfigure}
\hspace{.05\textwidth}
\begin{subfigure}[b]{0.45\textwidth}\includegraphics[width=\textwidth]{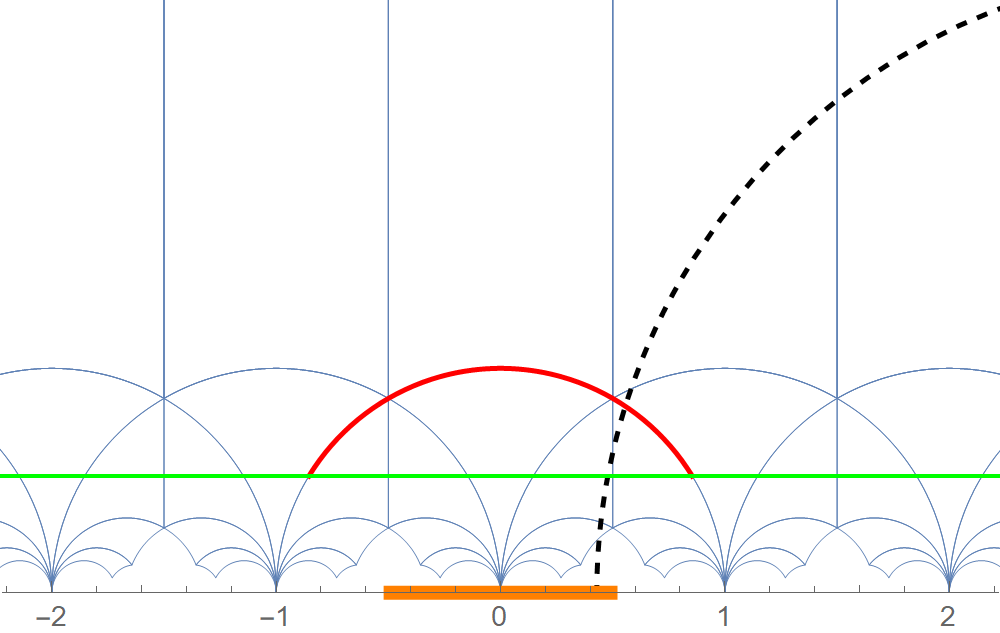}\caption{The intersection occurs in the horoball $y>0.52\ldots$ (green boundary), so we $\Sph$ is replaced with the wall $\W$ (still red)}\end{subfigure}
\begin{subfigure}[b]{0.45\textwidth}\includegraphics[width=\textwidth]{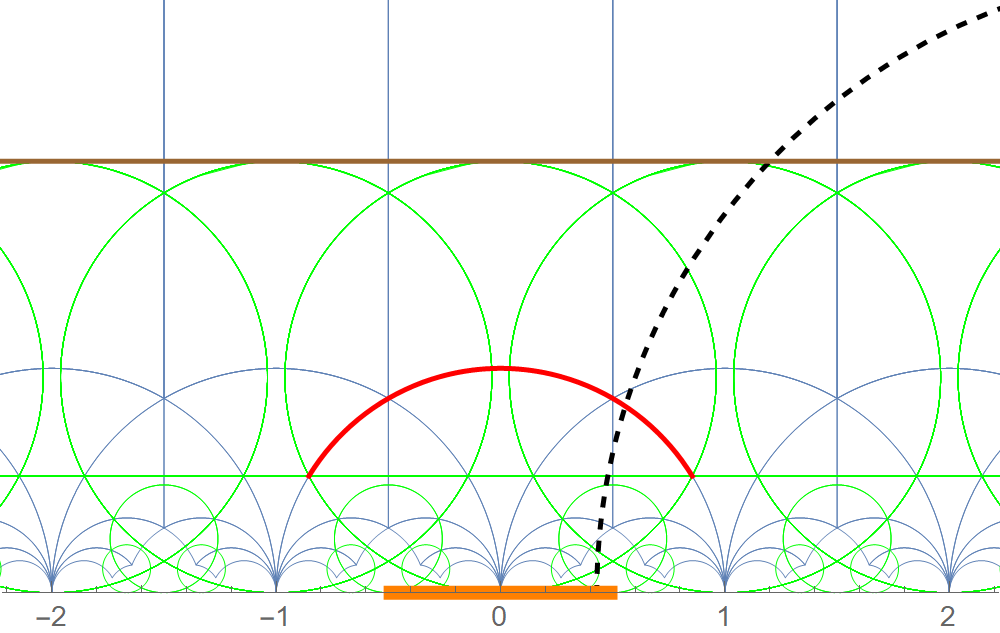}\caption{Translates of the horoball by $\Mod$ to other cusps have horoheight bounded above by  $y=1.923\ldots$ (brown line).}\end{subfigure}
\hspace{.05\textwidth}
\begin{subfigure}[b]{0.45\textwidth}\includegraphics[width=\textwidth]{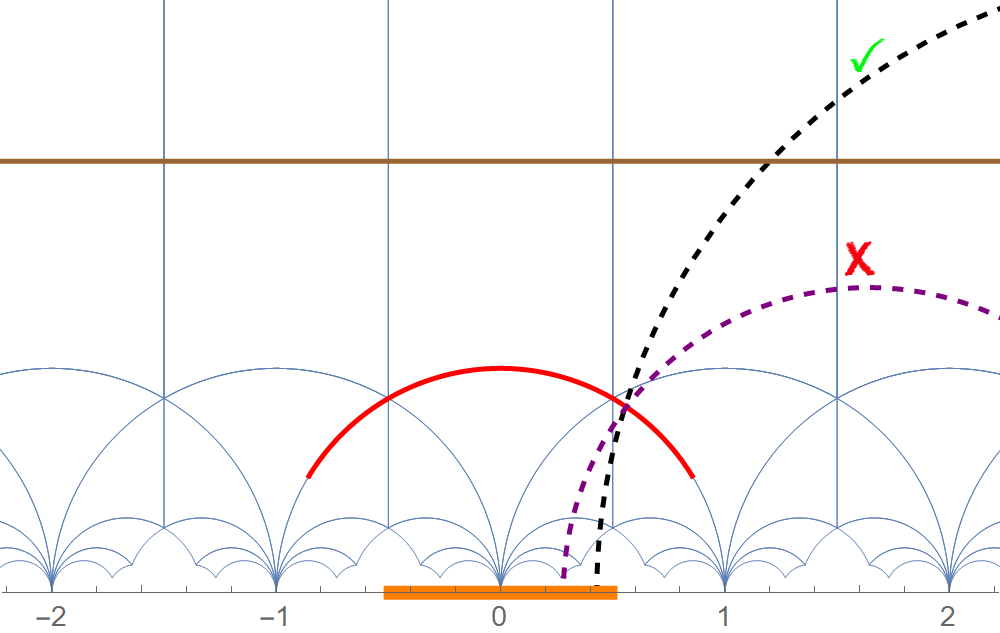}\caption{Any geodesics that have small horoheight are excluded from the unit tangent bundle of $\W$ to produce $\CW$.}\end{subfigure}
\caption{Constructing the section $\CW$ of geodesic flow.}
\label{fig:building section}
\end{figure}

Our choice of $\CW$ by construction avoids all spurious intersections, but may also inadvertently ignore some intersections corresponding to small digits or even entire geodesics. We say that a geodesic $\gamma$ is \emph{markable} if it intersects $\Mod$-translates of $\CW$ infinitely often in both the past and future, and it is easy to see that almost every geodesic is markable, see Corollary \ref{cor:markablegeneric}.  The Markable Geodesic Theorem \ref{thm:markable} records the desired link between the CF digits of a markable geodesic $\gamma$ and its intersections with $\Mod$-translates of $\CW$: intersections occur only with walls of the form $M_i\CW$, the intersections occur in the desired order, and no other intersections occur.

With the section $\CW$ in hand, we return to the question of ergodicity. We begin by working through a number of closely related functions acting on different spaces, pulling ergodicity from one function to the next. The ergodicity of geodesic flow on the modular surface $\Mod\backslash \Hyp^2_\R$ implies the ergodicity of the first return map to the projection of $\CW$ onto the modular surface. We can then lift this first return map back to $\Hyp^2_\R$ to obtain an isomorphic and thus equally ergodic map $\psi:\CW\to\CW$ (Proposition \ref{prop:phiergodic}). We then conjugate this system with the projection $\pi$ from the unit tangent bundle of $\Hyp^2_\R$ to $\hat \R\times \hat \R$ that takes any geodesic $\gamma$ to its forward and backward endpoints $(\gamma_+,\gamma_-)$, obtaining an ergodic mapping $\Psi=\pi\circ\psi\circ \pi^{-1}$ on $\pi(\CW)$. This map acts by taking a point $(\gamma_+, \gamma_-)$ to $(M_i^{-1} \gamma_+,M_i^{-1} \gamma_-)$ for some $i$. Thus, in the first coordinate, $\Psi$ acts by $T^i$, where $i$ may depend on the value of $\gamma_+$, i.e., this is a jump transformation associated to $T$.

Although we could conclude this jump transformation is ergodic, the ergodicity of a jump transformation does not imply the ergodicity of the original map in general. So to recover the ergodicity of $T$, we step back to $\hat \R\times \hat \R$. Namely, we consider a natural-extension-like function $\hat{T}$ on $K\times \hat\R$ such that the action of $\hat{T}$ on the first coordinate is simply $T$. We show that the action of $\hat{T}$ on $\overline{K}=\bigcup_{i=0}^\infty \hat{T}^i \pi(\CW)$ is well-behaved (Lemma \ref{lemma:Tinvariance}) and that, in fact, $\Psi$ is simply the map induced by restricting $\hat{T}$ to $\pi(\CW)$ (Lemma \ref{lemma:induced}). Induced maps have far greater structure than jump transformations and so we are able to conclude the ergodicity of $\hat{T}$ on $\overline{K}$ from the ergodicity of $\Psi$ (Lemma \ref{lemma:kbarergodic}) and from there conclude the ergodicity of $T$ by restricting to the first coordinate.

\begin{remark}
\label{rmk:what goes wrong}
There are two sources of complexity in the full proof of Theorem \ref{thm:main}. 
The first is that we would like to work in sufficient generality to include Heisenberg continued fractions. This requires working with hyperbolic spaces defined over complex numbers and quaternions, and obtaining some new results about inversions for the corresponding horospherical coordinates with boundary, see Theorem \ref{thm:InversionIdentities}.
The second source of complexity is the fact that, even for simple CF algorithms the point $M_i^{-1}\gamma_-$ need not remain outside of the sphere $\Sph$, and the properness assumption is necessary to guarantee that some intersections do occur. For example, the $\alpha$-CF algorithm with $\alpha >2/3$ and inversion $\iota_+$ would send the geodesic with endpoints $\gamma_+=\frac{2}{3}$ and $\gamma_-=\infty$ to  the geodesic with endpoints $M_1\gamma_+=1/2$ and $M_1\gamma_-=-1$, which does not intersect $\Sph$. However, $M_i\gamma$ cannot remain under $\Sph$ forever: applying the identity $\norm{1/x-1/y}\norm{x}\norm{y}=\norm{x-y}$, we see that additional iterations of the shift map must pull the endpoints of $M_i \gamma$ apart and  push $M_i \gamma_-$ out of the unit circle. 
The need to wait several iterations before a collision is detected then complicates the construction of the wall region $\W$ and the section $\CW$.
\end{remark}

\subsection{Further Remarks}
\label{sec:further}

Iwasawa CFs are the most general setting for our methods, which rely heavily on the fact that Iwasawa inversion spaces are boundaries of rank one symmetric spaces of non-compact type. Indeed, Iwasawa inversion spaces are precisely the spaces with this property, with the exclusion, due to the break down of vector-space-based techniques, of the exceptional $\X^1_{\mathbb O}$ that can be defined over the non-associative octonions.  Our notion of \emph{Iwasawa inversion space} differs slightly from the notion of \emph{Iwasawa groups} of \cite{MR4126256}, which excludes $\X^n_\R$ and allows $\X^1_{\mathbb O}$.

We remark further that boundaries of rank one symmetric spaces of non-compact type are arguably the most general setting for geometric CFs and Diophantine theory:  they are characterized \cite{MR3283670, COWLING19911} as homogeneous geodesic locally compact spaces admitting both a dilation (a notion of fraction) and a well-behaved inversion. (The Cygan metric we work with is not itself geodesic, but gives rise to a geodesic path metric.)

The present work suggests the following further directions of study:

\begin{question}
Under what conditions is the invariant measure for the CF shift map finite or (piecewise) analytic? 
\end{question}

\begin{question}
Is the CF shift map mixing?
\end{question}

\begin{question}
Does Theorem \ref{thm:main} hold for incomplete Iwasawa CFs, or for improper Iwasawa CFs with weak contact with the unit sphere (such as J.~Hurwitz CFs)?
\end{question}

\begin{question}
Can one characterize periodic Iwasawa CF expansions, analogously to the quadratic surd characterization of periodic regular CFs in $\R$ (cf.~\cite{Vperiodic})?
\end{question}

\begin{question}
Can one describe the set of exceptions to tail-equivalence in Theorem \ref{thm:IntroTail} (cf.~\cite{Lakein})?
\end{question}

\begin{question}
What Iwasawa CF algorithms are not represented in Table \ref{tab:examples}?
\end{question}

\subsection{Outline of the paper}
Following this introduction, in \S \ref{sec:defn} we provide the general theory and definitions for Iwasawa inversion spaces. In \S \ref{sec:IwasawaCF} we define Iwasawa CFs, give further examples (including Table \ref{tab:examples}) and study conditions that guarantee discreteness, properness, and completeness. In \S \ref{sec:convergence}, we quickly prove the convergence of Iwasawa CFs. In \S \ref{sec:markable}, we will build up the theory surrounding markable geodesics, culminating in the Markable Geodesic Theorem. In \S \ref{sec:ergodicity}, we use the Markable Geodesic Theorem to prove the ergodicity of the CF shift map for an Iwasawa CF expansion and, in applications of this result, prove Theorems \ref{thm:notmain} and \ref{thm:IntroTail}.

\subsection{Acknowledgements}
A.L. was supported by University of Michigan NSF RTG grant 1045119. This article was written during visits by the authors to University of Texas at Tyler, George Mason University, University of Michigan, and the Ohio State University. The authors thank these institutions for their hospitality, and Simons Travel Grant and GEAR Grant NSF DMS 11-07452 for the travel funding. The authors would also like to thank Jayadev Athreya  and Ralf Spatzier for their helpful comments, and the anonymous referee for the extensive suggestions that improved the exposition of this paper.

\section{General Theory}\label{sec:defn}
We now outline the structure of Iwasawa Inversion spaces $\X=\X^n_k$, the associated upper half-spaces $\Hyp^{n+1}_k$, and the continued fraction algorithms that can be built on $\X$ using this structure. We encourage the reader to skip this section on the first reading, following the intuition of the Euclidean space $\X=\X^n_\R=\R^n$ and hyperbolic half-space $\Hyp=\Hyp^{n+1}_\R$ lying above it.

\subsection{Iwasawa Inversion Spaces}
\label{sec:space}
Abstractly, an Iwasawa inversion space $\X$ is an Iwasawa $N$-group associated by the Iwasawa (KAN) decomposition to a non-exceptional rank one semi-simple Lie group $G$ and the parabolic boundary at infinity of the rank one symmetric space $G/K$. We now recall the explicit construction and Euclidean-like structure of these spaces. Most of the contents of this section can be found in \cite{MR4126256,MR1695450,MR1146815}.

Fix an associative division algebra $k$ over the reals --- the real, complex, or quaternionic numbers\footnote{When working over quaternions, we will use the convention $\frac{p}{q}:=pq^{-1}$.}  --- and an integer $n\geq 1$. (It appears that one could also consider the exceptional case of octonions, but we will not do so here.)  Recall that $k$ has a real part $\Re(k)$ isomorphic to $\R$ and a complementary imaginary part $\Im(k)$ satisfying $\dim_\R(\Im(k))=\dim_\R(k)-1$. We denote the standard norm of an element of $k$ or $k^n$ by $\Norm{\cdot}$, and refer to $\Norm{\cdot}$-preserving $k$-linear automorphisms of $k^n$ as \emph{unitary} transformations.

\begin{remark}For $k=\R$, one has $\Im(k)=\{0\}$. Note that $\Im(k)$ remains a subset of $k$; in particular, we do not identify $\Im(k)$ with $\R$ when $k=\C$. We furthermore exclude nonholomorphic transformations such as $z\mapsto \overline z$ from the unitary group, purely for notational convenience.
\end{remark}

\begin{defi}[Iwasawa Inversion Space]
The \emph{Iwasawa inversion space} $\X=\X_k^n$ is the set $k^n\times \Im(k)$ with coordinates $(z,t)$ and group law
\(
(z,t)*(z',t')=(z+z', t+t'+2\Im \langle z, z'\rangle),
\)
where the inner product of the vectors $z, z'$ is given by $\langle z, z'\rangle=\sum_i \overline{z_i}z'_i$. 
\end{defi}

Over the reals, $\X^n_\R$ reduces to $\R^n$ with $*$ acting by the usual vector addition. For $k\neq \R$, $\X^n_k$ is a step-2 nilpotent group (one uses $*$ to emphasize the non-commutativity), with identity $(0,0)$, and the inverse of a group element $(z,t)$ given by $(-z,-t)$. 

One gives $\X$ a gauge $\norm{\cdot}$ and Cygan metric $d$ (also known in different contexts as the Kor\'anyi metric or gauge metric) by defining
\begin{linenomath}\begin{equation*}\norm{(z,t)}:=\Norm{\Norm{z}^2+t}^{1/2}, \hspace{.35in} d((z,t), (z',t')):=\norm{(-z,-t)*(z,t)}.
\end{equation*}\end{linenomath}

The Cygan metric is largely analogous to the Euclidean metric, insofar as its automorphisms include analogs of translations (left multiplication by an element of $\X$ is an isometric isomorphism); dilations (for each $r>0$, the mapping $\delta_r(z,t)=(rz,r^2t)$ is a group isomorphism that rescales the metric by factor $r$); and rotations (unitary automorphisms of $k^n$ extend to isometric group isomorphisms of $\X$). 

On the other hand, the metric is fractal for $k\neq \R$: it is not a path metric (cf.~the closely associated Carnot-Carath\'eodory path metric) and gives $\X$ Hausdorff dimension $n\dim_\R(k)+2(\dim_\R(k)-1)$ which is not equal to its topological dimension $(n+1)\dim_\R(k)-1$. The latter is due to the fact that large metric balls are stretched by $\delta_r$ along the $t$ direction, while small ones are flattened out along the $z$ direction.

The \emph{Kor\'anyi} inversion $\iota_-: \X\setminus\{0\}\rightarrow \X\setminus\{0\}$ is defined by
\(\iota_-(z,t)=\left(\frac{-z}{\Norm{z}^2+ t}, \frac{-t}{\Norm{z}^4+\Norm{t}^2}\right).\)

The Kor\'anyi inversion is a natural generalization of the mapping $x\mapsto -1/x$, and in particular satisfies the following pair of identities for $h,h'\in \X\setminus\{0\}$, \cite{COWLING19911}:
\[
\label{eq:inversionidentitiesBasic}
\norm{\iota_- h}=\frac{1}{\norm{h}}, \hspace{.5in} d(\iota_- h, \iota_- h')=\frac{d(h,h')}{\norm{h}\norm{h'}}.
\]

In particular, $\iota_-$ sends each sphere $S(0,r)$ to the sphere $S(0,1/r)$, and preserves the unit sphere. We prove the identities in a broader context in Theorem \ref{thm:InversionIdentities}.

More generally, $\X$ admits inversions of the form 
\(\iota(z,t)=\left(\frac{-A(z)}{\Norm{z}^2+ t}, \frac{-(\det A) t}{\Norm{z}^4+\Norm{t}^2}\right),\)
where $A$ is a unitary transformation of $k^n$. We show in Lemma \ref{lemma:inversion} that all inversions satisfy generalizations of Equations \ref{eq:inversionidentitiesBasic}.

\subsection{Upper Half-Space}
\label{sec:hyp}
Fix an Iwasawa inversion space $\X=\X^n_k$. We extend the structure and Cygan metric of $\X$ to $k^{n+1}$ as follows, motivated by Parker \cite{MR1146815}:

\begin{defi}\label{def:extendcygan}
Extend the Heisenberg group law to $k^n\times k=k^{n+1}$ as 
\(
(z,w)*(z',w')=(z+z', w+w'+2\Im \langle z, z'\rangle),
\)
and the gauge and metric as:
\begin{linenomath}\begin{equation*}\norm{(z,w)}=\Norm{\Norm{z}^2+\Norm{\Re(w)}+\Im(w)}^{1/2}, \hspace{.15in} d((z,t), (z',t')):=\norm{(-z,-t)*(z,t)}.
\end{equation*}\end{linenomath}
\end{defi}

\begin{remark}
In the case $k=\R$, the Heisenberg group law on $k^{n+1}$ reduces to $(z,w)*(z',w')=(z+z', w+w')$, and the gauge reduces to the Euclidean-like $\norm{(z,w)}=\left(\Norm{z}^2+\Norm{w}\right)^{1/2}$. 
One could adjust Definition \ref{def:extendcygan}, by taking a square root along the $\Re(w)$ direction, so that it agrees with the Euclidean metric in the real case. We will not do so.
\end{remark} 

\begin{defi}
 The \emph{upper half-space}  $\Hyp^{n+1}_k\subset k^{n+1}$ is the set 
 \(\Hyp^{n+1}_k = \{ (z,w)\in k^n\times k \st \Re(w)>0\},\) satisfying $\partial \Hyp=\X$.
 
 One gives $\Hyp$ two natural metrics: the restriction of the Cygan metric $d$ on $k^{n+1}$ (this was introduced by Parker in \cite{
MR1146815} for $\Hyp^2_\C$ and generalized by Cao-Parker to $\Hyp^2_{\mathbb O}$ in \cite {MR3764717}); and the negatively-curved hyperbolic metric $d_\Hyp$, defined via an embedding into $\mathbb{P}(k^{n+2})$. 
Unless otherwise noted, $\Hyp$ will always be equipped with the metric $d_\Hyp$.
\end{defi}

\begin{defi}[Projective Embedding]
\label{defi:ProjectiveEmbedding}Let $\phi: k^{n+1}\rightarrow k^{n+2}$ be given by $\phi(z,w)=(1,\sqrt{2}z, w+\Norm{z}^2)$, and set $\Phi=\mathbb{P}\circ \phi: k^{n+1}\rightarrow \mathbb{P}(k^{n+2})$.
\end{defi}

Consider the Hermitian form $\langle \cdot, \cdot \rangle_J$ of signature $(n+1,1)$ defined on $k^{n+2}$ by
\(
J=\begin{bmatrix}
    0 & 0_n & -1\\
    0_n & \operatorname{id}_n & 0_n\\
    -1 & 0_n & 0
\end{bmatrix},
\)
and let $\mathcal S=\{(1:a:b)\st \Norm{a}<2\Re(b)\} \subset \mathbb P(k^{n+2})$ be the Siegel region. One can show that $\Phi$ induces a bijection between $\Hyp$ and $\mathcal S$, and furthermore $\mathcal S$ is the projectivization of the negative cone of $J$. This induces an action of the projective unitary group $G=\mathbb{P}U(J)$ on $\Hyp$, cf.~\S \ref{sec:convergence}.

\begin{defi}(Hyperbolic metric)
The hyperbolic metric $d_\Hyp$ on $\Hyp$ is the unique $G$-invariant Riemannian metric on $\Hyp$ with sectional curvature pinched in the range $[-1, -1/4]$ if $k\neq \R$ or equal to $-1$ if $k=\R$.
\end{defi}

For $\Hyp=\Hyp^2_\R$, $d_\Hyp$ is agrees with the familiar metric $\frac{1}{y}ds$ if one takes $x=z$ and $y=w^2$. One has $\Phi(\Hyp)=\{(1:a:b) \st 2b>a^2\}\subset \mathbb{RP}^2$, and a projective change of coordinates recovers the Klein disk model of $\Hyp^2_\R$ with its $SO(2,1)$-invariant metric. 

In general, the Siegel region is projectively equivalent to a unit ball in projective space $\mathbb P(k^{n+2})$. The mapping $\Phi\vert_\X:\partial \Hyp \rightarrow \partial \Phi(\Hyp)$ omits a single point, which we identify with the point $\infty$ in the one-point compactification of  $k^{n+1}$ (and its subsets $\X$ and $\overline{\Hyp}$).

\subsection{Inversion Theorem}
Returning to the Cygan metric, we record two connections to the projective embedding:
\begin{lemma}[Parker \cite{MR1146815}]
\label{lemma:Jform}
Suppose $p,q\in \overline{\Hyp}$, with either $p$ or $q$ in $\X=\partial \Hyp$. Then the Cygan metric satisfies $d(p,q)=\Norm{\langle \phi(p),\phi(q)\rangle_J}^{1/2}$.
\end{lemma}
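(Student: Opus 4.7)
The plan is to verify the identity by direct computation, separately expanding the two sides and comparing them in the decomposition $k = \R \oplus \Im(k)$. I would do this in the following steps.

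First, I would compute $\langle \phi(p), \phi(q)\rangle_J$ explicitly. Writing $p=(z,w)$, $q=(z',w')$ and using $\phi(z,w)=(1,\sqrt{2}z,w+\Norm{z}^2)$, a direct multiplication with the off-diagonal matrix $J$ gives
\(
\langle \phi(p),\phi(q)\rangle_J \;=\; -\bar{w}-w'+2\langle z,z'\rangle-\Norm{z}^2-\Norm{z'}^2,
\)
where $\langle z,z'\rangle=\sum \bar{z_i}z'_i$ is the $k$-valued inner product. I would then use the polarization identity $\Norm{z-z'}^2=\Norm{z}^2+\Norm{z'}^2-2\Re\langle z,z'\rangle$, together with $\langle z,z'\rangle=\Re\langle z,z'\rangle+\Im\langle z,z'\rangle$, to rewrite the right-hand side as
\(
\langle \phi(p),\phi(q)\rangle_J = -\Norm{z-z'}^2-\bar{w}-w'+2\Im\langle z,z'\rangle.
\)

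Second, I would expand $d(p,q)^2$ using the group law. From $(-z,-w)*(z',w')=(z'-z,\ w'-w-2\Im\langle z,z'\rangle)$ and the definition of the gauge, we get
\(
d(p,q)^2=\Norm{\Norm{z-z'}^2+\Norm{\Re(w'-w-2\Im\langle z,z'\rangle)}+\Im(w'-w-2\Im\langle z,z'\rangle)}.
\)
Here the hypothesis that either $p$ or $q$ lies in $\X$ is crucial: WLOG $q\in\X$, so $\Re(w')=0$. Since $p\in\overline{\Hyp}$ forces $\Re(w)\geq 0$, the quantity $\Re(w'-w-2\Im\langle z,z'\rangle)=-\Re(w)$ is nonpositive, so its absolute value is simply $\Re(w)$ and the troublesome $\Norm{\Re(\cdot)}$ collapses to $-\Re(w'-w)$.

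Third, I would compare the two expressions using the decomposition $k=\R\oplus\Im(k)$. Both of the resulting elements of $k$ have real part equal to $-\bigl(\Re(w)+\Norm{z-z'}^2\bigr)$ (up to an overall sign), and their imaginary parts $w'-\Im(w)-2\Im\langle z,z'\rangle$ and $-w'+\Im(w)+2\Im\langle z,z'\rangle$ differ only by a sign. Because the norm on $k$ satisfies $\Norm{a+v}^2=a^2+\Norm{v}^2$ for $a\in\R$, $v\in\Im(k)$, and is sign-insensitive on the imaginary part, we obtain
\(
\Norm{\langle \phi(p),\phi(q)\rangle_J}^2 \;=\; d(p,q)^4,
\)
which is the claimed identity after a square root.

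The computation is routine once the conventions are fixed; the main subtlety — and the only place the hypothesis $p\in\X$ or $q\in\X$ enters — is the asymmetric term $\Norm{\Re(w)}$ in the extended Cygan gauge, which is why the formula fails to identify $d$ with $\Norm{\langle\phi(\cdot),\phi(\cdot)\rangle_J}^{1/2}$ for two interior points of $\Hyp$. The secondary book-keeping task is making sure the polarization identity $\Norm{z-z'}^2=\Norm{z}^2+\Norm{z'}^2-2\Re\langle z,z'\rangle$ is used correctly over a non-commutative $k$, noting that $\langle z',z\rangle=\overline{\langle z,z'\rangle}$ so that only the \emph{real} part of $\langle z,z'\rangle$ appears in the expansion.
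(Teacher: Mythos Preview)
Your computation is correct. The paper does not actually prove this lemma; it is stated with attribution to Parker \cite{MR1146815} and used as a black box. Your direct verification---expanding both sides and comparing real and imaginary parts, using the boundary hypothesis to collapse the $\Norm{\Re(\cdot)}$ term---is the natural argument and matches the spirit of Parker's original computation for $\Hyp^2_\C$. One small remark: your two expressions are in fact exact negatives of each other (not merely matching in norm after splitting into real and imaginary parts), since $\bar w=\Re(w)-\Im(w)$ turns $-\Norm{z-z'}^2-\bar w-w'+2\Im\langle z,z'\rangle$ into $-(\Norm{z-z'}^2+\Re(w)+w'-\Im(w)-2\Im\langle z,z'\rangle)$ directly; this slightly streamlines the final comparison.
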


\begin{lemma}\label{lemma:thirdcoordinate}
Let $h\in \overline{\Hyp}$ and denote $\phi(h)=(1,a,b)$. Then $\norm{h}=\Norm{b}^{1/2}$.
\begin{proof}
This is immediate from Definitions \ref{def:extendcygan} and \ref{defi:ProjectiveEmbedding}.
\end{proof}
\end{lemma}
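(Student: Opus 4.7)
The plan is to simply unpack the two definitions and observe that the condition $h\in \overline{\Hyp}$ is exactly what is needed to make them line up.

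Write $h=(z,w)\in k^n\times k$ with $\Re(w)\geq 0$ (which is precisely the defining condition for $\overline\Hyp$). By Definition \ref{defi:ProjectiveEmbedding}, $\phi(h)=(1,\sqrt 2\, z, w+\Norm{z}^2)$, so $b=w+\Norm{z}^2$. Decomposing $w=\Re(w)+\Im(w)\in \R\oplus \Im(k)$ and using that $\Norm{z}^2\in \R$ is nonnegative, I have
\[
b \;=\; \bigl(\Norm{z}^2+\Re(w)\bigr)+\Im(w),
\]
which is the decomposition of $b\in k$ into its real and imaginary parts.

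On the other hand, by Definition \ref{def:extendcygan},
\(
\norm{h}=\Norm{\Norm{z}^2+\Norm{\Re(w)}+\Im(w)}^{1/2}.
\)
Here the assumption $\Re(w)\geq 0$ is essential: it gives $\Norm{\Re(w)}=\Re(w)$, so the expression inside the outer norm is exactly $\Norm{z}^2+\Re(w)+\Im(w)=b$.

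Combining the two gives $\norm{h}=\Norm{b}^{1/2}$, which is the claim. There is no real obstacle; the only conceptual point is noticing that the absolute value on $\Re(w)$ in the definition of the gauge becomes superfluous on $\overline{\Hyp}$, so that the formula collapses to the third projective coordinate. The result does not hold outside $\overline\Hyp$, which is why the hypothesis $h\in\overline\Hyp$ is needed.
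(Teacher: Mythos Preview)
Your proof is correct and is precisely the unpacking of the two definitions that the paper leaves implicit when it says the result is ``immediate.'' The one substantive observation you make explicit---that $\Re(w)\geq 0$ on $\overline{\Hyp}$ so that $\Norm{\Re(w)}=\Re(w)$---is exactly the point that makes the gauge collapse to $\Norm{b}^{1/2}$, so there is no difference in approach.
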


With the above machinery, we can provide a simple description of the Kor\'anyi inversion, extended to $\overline \Hyp$, and prove the inversion identities \eqref{eq:inversionidentitiesBasic}.
\begin{lemma}
\label{lemma:inversion}
The Kor\'anyi inversion $\iota_-: \overline{\Hyp}\setminus\{0\}\rightarrow \overline{\Hyp}\setminus\{0\}$ given by the mapping
\((z,w)\mapsto \left(\frac{-z}{\Norm{z}^2+w}, \frac{\overline{w}}{\Norm{\Norm{z}^2+w}^2}\right)\)
is induced by the matrix $J\in G$. That is, setting $\phi(z,w)=(1,a,b)$, one has $\phi(\iota_-(z,w))=(1,-a/b,1/b)=\frac{\phi(z,w)}{-b}$, and in $\mathbb{P}(k^{n+2})$ one has $\Phi(\iota_-(z,w))=J\Phi(z,w)$.
\begin{proof}
We have $\phi(z,w)=(1,\sqrt{2}z, \norm{z}^2+w),$ so that $J\phi(z,w)=( -(\norm{z}^2+w),\sqrt{2}z,-1)$. Up to a factor of ${-(\Norm{z}^2+w)}$, this is equivalent to \begin{linenomath}\begin{align*}
    \left(1,\sqrt{2}\frac{-z}{\Norm{z}^2+w},\frac{1}{\Norm{z}^2+w}\right)
    &=\left(1,\sqrt{2}\frac{-z}{\Norm{z}^2+w},\Norm{\frac{-z}{\Norm{z}^2+w}}^2+\frac{\overline{w}}{\Norm{\Norm{z}^2+w}^2}\right),
\end{align*}\end{linenomath}
which in turn is equal to $\phi(\iota_-(z,w))$ as desired.
\end{proof}
\end{lemma}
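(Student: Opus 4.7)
The strategy is a direct computation: apply $J$ to $\phi(z,w)$, normalize the resulting projective vector so its first entry becomes $1$, and then read off the second and third entries as $\sqrt{2}z'$ and $\|z'\|^2+w'$ respectively, where $(z',w')=\iota_-(z,w)$.

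More concretely, I would first compute
\(
J\phi(z,w) = J\bigl(1,\sqrt{2}z,\Norm{z}^2+w\bigr) = \bigl(-(\Norm{z}^2+w),\,\sqrt{2}z,\,-1\bigr),
\)
and then right-scale by $\lambda = -(\Norm{z}^2+w)^{-1}$ (being careful, since $k$ may be noncommutative, to use right multiplication consistently with the convention for projective coordinates over $k$). This puts a $1$ in the first slot and rewrites the vector as $(1,\,-\sqrt{2}z(\Norm{z}^2+w)^{-1},\,(\Norm{z}^2+w)^{-1})$. Matching with the general form $\phi(z',w')=(1,\sqrt{2}z',\Norm{z'}^2+w')$ immediately yields $z' = -z(\Norm{z}^2+w)^{-1}$.

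The third coordinate then forces $\Norm{z'}^2 + w' = (\Norm{z}^2+w)^{-1}$, so
\(
w' = (\Norm{z}^2+w)^{-1} - \Norm{z'}^2.
\)
Using $(\Norm{z}^2+w)^{-1} = \overline{(\Norm{z}^2+w)}/\Norm{\Norm{z}^2+w}^2 = (\Norm{z}^2+\overline{w})/\Norm{\Norm{z}^2+w}^2$ and $\Norm{z'}^2 = \Norm{z}^2/\Norm{\Norm{z}^2+w}^2$, the real $\Norm{z}^2$ terms cancel and one is left with $w' = \overline{w}/\Norm{\Norm{z}^2+w}^2$, which is exactly the stated formula.

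The step I expect to require the most care is the scaling in the second sentence, because for $k$ non-commutative (quaternions) right versus left multiplication matters, and one must confirm that the chosen convention for $\mathbb{P}(k^{n+2})$ is consistent with the action $\Phi\mapsto J\Phi$ coming from $G=\mathbb{P}U(J)$. Once that convention is fixed, everything reduces to the algebraic identity above. A brief sanity check at the end---verifying that $\Re(w')\ge 0$ whenever $(z,w)\in\overline{\Hyp}\setminus\{0\}$, so that $\iota_-$ genuinely maps $\overline{\Hyp}\setminus\{0\}$ to itself---completes the argument; this follows from $\Re(\overline{w}) = \Re(w) \ge 0$ and the positivity of $\Norm{\Norm{z}^2+w}^2$.
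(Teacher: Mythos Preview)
Your proposal is correct and follows essentially the same computation as the paper: apply $J$ to $\phi(z,w)$, normalize the first coordinate to $1$, and identify the resulting vector as $\phi(\iota_-(z,w))$ via the algebraic identity $(\Norm{z}^2+w)^{-1}=\Norm{z'}^2+\overline{w}/\Norm{\Norm{z}^2+w}^2$. Your added remarks on right versus left scaling over noncommutative $k$ and the check that $\Re(w')\ge 0$ are useful extra care, but do not change the approach.
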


\begin{thm}[Inversion Theorem]
\label{thm:InversionIdentities}Let $h\in (\Hyp\cup \X)\setminus\{0\}$ and $h'\in \X\setminus\{0\}$. The following identities hold for the Kor\'anyi inversion $\iota_-$, Cygan metric $d$, and gauge $\norm{\cdot}$:
\[\label{eq:inversionidentitiesAdvanced}\norm{\iota_- h}=\frac{1}{\norm{h}} \hspace{.25in}\text{and} \hspace{.25in} d(\iota_- h, \iota_- h')=\frac{d(h,h')}{\norm{h}\norm{h'}}.
\]
\begin{proof}
Write $\phi(h)=(1,a,b)$ and $\phi(h')=(1,a',b')$.  By Lemma \ref{lemma:inversion}, $\phi(\iota_-(h))=(1,-a/b,1/b)$, and the first identity thus follows from Lemma \ref{lemma:thirdcoordinate}.

Since $h'\in \X$, Lemma \ref{lemma:Jform} gives $d(h,h')=\Norm{\left\langle \phi(h),\phi(h')\right\rangle_J}^{1/2}$ and $d(\iota_- h, \iota_- h')=\Norm{\left\langle \iota_- \phi(h),\iota_- \phi(h')\right\rangle_J}^{1/2}$. Using Lemmas \ref{lemma:inversion} and \ref{lemma:thirdcoordinate}, we obtain:
\begin{linenomath}\begin{align*}
d(\iota_- h, \iota_- h')=
        \Norm{\left\langle\frac{\phi(h)}{-b},\frac{\phi(h')}{-b'}\right\rangle_J}^{1/2}=
        \frac{
            d(h,h')}
            {\Norm{b}^{1/2} \Norm{b'}^{1/2}}
        = \frac{d(h,h')}{\norm{h}\norm{h'}},
\end{align*}\end{linenomath}
providing the second identity.
\end{proof}
\end{thm}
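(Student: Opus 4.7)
The plan is to leverage the projective description of the Koranyi inversion from the preceding lemma ($\phi(\iota_- h) = \phi(h)/(-b)$ where $\phi(h) = (1,a,b)$) together with the two earlier lemmas expressing the gauge and the Cygan distance in terms of $\phi$ and the Hermitian form $\langle \cdot, \cdot \rangle_J$. This reduces both identities to algebraic manipulation of scalar prefactors of projective representatives.

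First I would verify the gauge identity $\norm{\iota_- h} = 1/\norm{h}$. Writing $\phi(h) = (1,a,b)$, Lemma \ref{lemma:thirdcoordinate} gives $\norm{h} = \Norm{b}^{1/2}$. By Lemma \ref{lemma:inversion}, $\phi(\iota_- h) = (1, -a/b, 1/b)$, whose third coordinate has gauge $\Norm{1/b}^{1/2} = 1/\Norm{b}^{1/2}$. Applying Lemma \ref{lemma:thirdcoordinate} once more to $\iota_- h \in \overline{\Hyp}\setminus\{0\}$ yields the claim. One only needs to note that $\iota_- h$ still lies in $\overline{\Hyp} \setminus \{0\}$, so the third-coordinate lemma applies.

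For the distance identity, since $h' \in \X$, Lemma \ref{lemma:Jform} is applicable to both $d(h,h')$ and $d(\iota_- h, \iota_- h')$; the latter is allowed because $\iota_- h' \in \X$ as well (the Koranyi inversion preserves the boundary). Write $\phi(h) = (1,a,b)$ and $\phi(h') = (1,a',b')$. Using Lemma \ref{lemma:inversion} we obtain
\[
d(\iota_- h, \iota_- h') = \Norm{\left\langle \tfrac{\phi(h)}{-b}, \tfrac{\phi(h')}{-b'} \right\rangle_J}^{1/2}.
\]
The Hermitian form is sesquilinear, so scalars factor out of each slot with (at most) a conjugation; passing to the gauge $\Norm{\cdot}$ kills conjugation and signs, yielding the multiplicative factor $\Norm{b}^{-1}\Norm{b'}^{-1}$. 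Taking square roots and invoking Lemmas \ref{lemma:Jform} and \ref{lemma:thirdcoordinate} once more identifies this with $d(h,h')/(\norm{h}\norm{h'})$.

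The main thing to be careful about is the sesquilinearity step in the quaternionic setting, where scalars do not commute with each other: one must verify that $\langle \lambda v, \mu w\rangle_J = \bar{\lambda}\langle v,w\rangle_J \mu$ (or whichever convention is in force) behaves correctly under $\Norm{\cdot}$, which follows from the multiplicativity of the quaternionic norm. Apart from this minor check, the proof is a short computation; there is no serious obstacle once Lemmas \ref{lemma:Jform}, \ref{lemma:thirdcoordinate}, and \ref{lemma:inversion} are available.
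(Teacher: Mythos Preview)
Your proposal is correct and follows essentially the same approach as the paper: both arguments write $\phi(h)=(1,a,b)$, invoke Lemma~\ref{lemma:inversion} to get $\phi(\iota_- h)=(1,-a/b,1/b)$, read off the gauge identity from Lemma~\ref{lemma:thirdcoordinate}, and then use Lemma~\ref{lemma:Jform} together with sesquilinearity of $\langle\cdot,\cdot\rangle_J$ to pull out the scalar factors in the distance identity. Your added remarks about why Lemma~\ref{lemma:Jform} applies to $d(\iota_-h,\iota_-h')$ and about the quaternionic sesquilinearity convention are welcome clarifications but do not constitute a different method.
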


\begin{remark}
Surprisingly, Lemma \ref{lemma:Jform} and the second identity of Theorem \ref{thm:InversionIdentities} fail when both $h$ and $h'$ lie in $\Hyp$.
\end{remark}

Compositions of diagonal elements of $G$ (as well as certain conjugation actions) with the Kor\'anyi inversion continue to satisfy the conclusions of Theorem \ref{thm:InversionIdentities}. We define:
\begin{defi}
\label{defi:inversion}
An \emph{inversion} is a M\"obius transformation $\iota: \X\setminus\{0\}\rightarrow \X\setminus\{0\}$ satisfying the conclusions of Theorem \ref{thm:InversionIdentities}. \end{defi}

It follows from the classification of isometries of $\Hyp$ that every inversion factors as a composition of a rotation and the Kor\'anyi inversion.
\begin{lemma}
\label{lemma:inversionform}
If $\iota$ is an inversion, then there exists a unitary mapping $f: k^n\rightarrow k^n$ such that $\iota=f\circ \iota_-$.
\begin{proof}
Since $\iota$ is a M\"obius transformation, it extends to an isometry of $\Hyp$. The mapping $f=\iota \iota_-$ is an isometry of $\Hyp$ that fixes the points $0$ and $\infty$. It therefore maps the geodesic $\gamma$ joining $0$ and $\infty$ to itself. Since $\iota_-$ and $\iota$ fix the point $(0,1)\in \gamma$ by the first part of \eqref{eq:inversionidentitiesAdvanced}, the same must be true for $f$. Thus, $\iota$ is represented in $U(J)$ by a matrix of the form 
\[\label{eq:inversionmatrix}
\begin{bmatrix}
    0 & 0_n & -1\\
    0_n & A & 0_n\\
    -1 & 0_n & 0
\end{bmatrix},
\]
where $A$ is a unitary matrix over $k^n$.
\end{proof}
\end{lemma}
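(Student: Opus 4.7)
The plan is to factor $\iota$ through the Koranyi inversion $\iota_-$ and argue that the ``error'' $f := \iota \circ \iota_-$, viewed projectively, sits in the stabilizer of two boundary points and one interior point, hence must be a unitary rotation of the $k^n$-factor. The overall strategy mirrors the classical argument that a M\"obius involution of $\Hyp^2_\R$ swapping two boundary points is determined up to a rotation fixing the joining geodesic.

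First, I would invoke Remark \ref{rmk:QC} to lift $\iota$ and $\iota_-$ to isometries of $(\Hyp, d_\Hyp)$ represented in $\mathbb{P}U(J)$, so that $f = \iota \circ \iota_-$ is well-defined as an element of the projective unitary group. The first identity in \eqref{eq:inversionidentitiesAdvanced} forces both $\iota$ and $\iota_-$ to swap the boundary points $0$ and $\infty$ (since these are the unique boundary points of gauge $0$ and $\infty$, respectively), so $f$ fixes the pair $\{0,\infty\}$ and preserves the hyperbolic geodesic $\gamma$ joining them.

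Next, I would show that $f$ fixes the particular point $(0,1) \in \gamma$. The gauge identity gives $\norm{\iota_-(0,1)} = \norm{\iota(0,1)} = 1$, and within $\gamma = \{(0,s) : s>0\}$ the gauge $\norm{(0,s)} = \Norm{s}^{1/2}$ separates points, so $(0,1)$ is the unique point of $\gamma$ with gauge $1$. Since both $\iota_-$ and $\iota$ send this point to a point of gauge $1$ on the image geodesic (which is $\gamma$ itself), each fixes $(0,1)$; therefore so does $f$. An isometry of the negatively curved space $\Hyp$ fixing two boundary points and an interior point of the connecting geodesic must fix that geodesic pointwise.

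Finally, I would classify the subgroup of $\mathbb{P}U(J)$ fixing $0$, $\infty$, and $(0,1)$. Writing a representative matrix in the block decomposition suggested by $J$, the fixing of $\infty$ forces the Heisenberg--dilation--rotation form, fixing $0$ kills translations, and fixing the point $(0,1)$ on the axis kills the dilation; what remains is a matrix of the form
\[
\begin{bmatrix}
1 & 0_n & 0 \\
0_n & A & 0_n \\
0 & 0_n & 1
\end{bmatrix},
\]
with $A \in U(k^n)$. This matrix represents precisely the extension of the unitary map $f : k^n \to k^n$, $z \mapsto A(z)$, to $\Hyp$. Composing with $\iota_-$ on the right and using that $\iota_- \circ \iota_- = \id$ projectively yields $\iota = f \circ \iota_-$. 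The main technical point is the last step---identifying the stabilizer---but since the matrix shape of elements of $U(J)$ fixing $\infty$ (the $(1,0,\ldots,0)$ null line) and $0$ (the $(0,\ldots,0,1)$ null line) is already nearly the form \eqref{eq:inversionmatrix} up to scalars, this reduces to a short bookkeeping check with no real analytic content.
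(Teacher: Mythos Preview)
Your proposal is correct and follows essentially the same approach as the paper: define $f=\iota\iota_-$, observe that it fixes $0$, $\infty$, and the point $(0,1)$ on the joining geodesic (using the gauge identity), and conclude that $f$ is a unitary rotation of the $k^n$-factor. Your write-up supplies slightly more detail than the paper on why the inversions swap $0\leftrightarrow\infty$ and on the explicit stabilizer computation in $U(J)$, but the argument is otherwise identical.
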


In addition to the (negative) Kor\'anyi inversion $\iota_-$, we will also be interested in the positive inversion $\iota_+$ corresponding to the matrix $A=-I_n$ in \eqref{eq:inversionmatrix}, and the conjugation inversion $\iota_c$ corresponding to the diagonal matrix $A$ with diagonal entries $(-1, 1, 1, \ldots, 1)$. For example, for $p=(x,y,z)\in \R^3$, one has $\iota_-(p)=-p/\Norm{p}^2$, $\iota_+(p)=p/\Norm{p}^2$, and $\iota_c(p)=(x,-y,-z)/\Norm{p}^2$. Note that under the standard identification of $\C$ with $\R^2$, the mapping $z\mapsto 1/z$ corresponds to the inversion $\iota_c$.

\subsection{Isometries, Lattices, and Fundamental Domains}

We thus have an Iwasawa inversion space $\X$ and associated hyperbolic space $\Hyp$, with the unitary group $G$ acting on $\Hyp$ by isometries with respect to the Riemannian metric $d_\Hyp$, and by generalized M\"obius transformations on $\widehat X = \X\cup\{\infty\}$. One shows that $G$ is in fact the holomorphic isometry group of $\Hyp$, and the group of (1-quasi-)conformal mappings of $\X\cup\{\infty\}$. Restricting $G$ to the set of transformations $\Stab_G(\infty)$ preserving infinity provides an action on $\X$ that can be identified with the group of similarities of $\X$. This allows us to think of $\Isom(\X)$ as a subgroup of $\Isom(\Hyp)$. 

The group $G$ is, in fact, a rank-one simple Lie group, with an Iwasawa decomposition $G=KAN$. One can identify the subgroup $N$ with the space $\X$ (with the group structure provided above), and the subgroup $A$ with the group of dilations $\{\delta_r\st r>0\}$. The subgroup $K$ can be identified with the stabilizer of the point $(0,1)\in \Hyp$, and includes the Kor\'anyi inversion.

We will be interested in lattices and fundamental domains in $\Isom(X)$ and $\Isom(\Hyp)$, equipped with the respective Haar measures. 
\begin{defi}\label{defi:fundamentaldomain}
Let $Y$ be a metric space with an $\Isom(Y)$-invariant measure. A \emph{lattice} is a discrete subgroup $\Gamma\subset \Isom(Y)$ such that the quotient $\Gamma \backslash \Isom(Y)$ has finite measure. The lattice is \emph{uniform} if $\Gamma \backslash \Isom(Y)$ is furthermore compact, and non-uniform otherwise.

A \emph{fundamental domain} for $\Gamma$ is a measurable set $K\subset Y$ such that $\X=\bigcup_{a\in \Gamma} aK$ and the overlap $K\cap \bigcup_{a(\neq \id)\in \Gamma} aK$ has measure $0$. 

A \emph{rounding mapping} $[\cdot]:Y\rightarrow \Gamma$ associated to $\Gamma$ and $K$ is defined, almost everywhere, by the property that for each $a\in \Gamma$ and $x\in K$, one has $[a(x)]=a$. 
This property defines $[\cdot]$ uniquely away from the overlap, and $[\cdot]$ provides some choice of admissible values has been made for points in the overlap.
\end{defi}

\section{Iwasawa Continued Fractions}\label{sec:IwasawaCF}
We can now define Iwasawa continued fractions and establish some auxiliary terminology and notation.

\begin{defi}(Iwasawa Continued Fraction)
The Iwasawa Continued Fraction Algorithm is defined by the following data:
\begin{enumerate}
    \item An associative division algebra $k$ over $\R$ and integer $n\geq 1$,
    \item The associated Iwasawa inversion space $\X=\X^n_k$,
    \item An inversion $\iota$ (see Definition \ref{defi:inversion}),
    \item A lattice $\Zee \subset \Isom(\X)$, a fundamental domain $K\subset \X$ for $\Zee$, and an associated  rounding mapping $[\cdot]: \X\rightarrow \Zee$ (see Definition \ref{defi:fundamentaldomain}).
\end{enumerate}
Associated to an Iwasawa CF algorithm, we have:
\begin{enumerate}	
\setcounter{enumi}{4}
	\item The hyperbolic space $\Hyp=\Hyp^{n+1}_k$ satisfying $\partial \Hyp=\X$,
	\item The holomorphic isometry group $G$ of $\Hyp$,
	\item The modular group $\Mod=\langle \iota, \Zee\rangle \subset G$.
	\item The shift map $T:K\rightarrow K$ defined  by $T(0)=0$ if $0\in K$ and otherwise by \(T(x)=[\iota(x)]^{-1}(\iota(x)).\) 
\end{enumerate}
\end{defi}

For a point $x\in \X$, we can then inductively define the continued fraction digits $a_i\in \Zee$ and forward iterates $x_i\in K$ by taking
\begin{linenomath}\begin{align*}
    &a_0 = [{x}], &&x_0=a_0^{-1}(x),\\
    &a_{i+1} = [\iota(x_i)], &&x_{i+1}=a_{i+1}^{-1}(\iota(x_i))=T(x_i),
\end{align*}\end{linenomath}
where the sequences terminate if at some point $x_i=0$. The (possibly finite) sequence $(a_i)$ of elements of $\Zee$ is the \emph{continued fraction sequence} of $x$. (Note that later in the paper, we will assign a bi-infinite string of digits to pairs of points one of which is in $K$, resulting in a different notion of $a_0$. For this reason, for points in $K$ we will leave $a_0$ undefined.)

Given a sequence $(a_i)$ of elements of $\Zee$ (possibly arising from the above algorithm), one defines the \emph{convergent mappings} $M_i\in \Mod$ inductively by setting $M_0$ to be the identity mapping and $M_{n+1}= M_n\circ \iota^{-1} \circ a_{n+1}$. (In the sequel, we will often suppress the $\circ$ notation for convenience.) By construction, we see that $x_0=M_n (x_n)$. For each $i$, the $i^{th}$ \emph{convergent} of the continued fraction is then the point $M_i(0)$. Note that $T^i x_0=x_i=M_i^{-1}(x_0)$.

We will be interested in conditions on the continued fraction algorithm that guarantee the following properties:
\begin{defi}
The continued fraction algorithm is \emph{convergent} if the continued fraction digits of almost every point $x\in K$ produce convergents $M_i(0)$ that indeed converge to $x$ (clearly, every finite expansion is convergent). The algorithm is \emph{ergodic} if the shift map $T$ is ergodic.
\end{defi}

We will use the following definition of ergodicity:
\begin{defi}
Let $(A,\mu)$ be a measure space and $f:A\rightarrow A$ a measurable (but not necessarily measure-preserving) transformation. Then, $f$ is said to be ergodic with respect to $\mu$ if for every measurable $B\subset A$, $\mu(f^{-1}B\triangle B)=0$ implies that $\mu(B)=0$ or $\mu(A\setminus B)=0$. If $\phi:A\rightarrow A$ is a measurable flow, then $\phi$ is ergodic with respect to $\mu$ if for every measurable $B\subset A$,  $\mu(\phi_t(B)\triangle B)=0$ for all $t\in\mathbb{R}$ implies that $\mu(B)=0$ or $\mu(A\setminus B)=0$.
\end{defi}

\begin{remark}
Note that with this definition, ergodicity with respect to a measure $\mu$ implies ergodicity with respect to any measure that is equivalent to $\mu$. In this paper, the relevant measure (or class of equivalent measures) will always be clear from context, and will often be a Lebesgue or Haar measure.
\end{remark}

We will prove the convergence of the Iwasawa CFs under the assumptions of \emph{properness} and \emph{discreteness}:
\begin{defi}[Properness and Discreteness]
The Iwasawa continued fraction is \emph{proper} if the closure of $K$ is bounded away from the unit sphere: $\rad(K)=\sup\{\norm{x}\st x\in K\}<1$. It is \emph{discrete} if $\Mod$ is a discrete subgroup (and therefore, by construction, a lattice) in $G$.
\end{defi}

There do exist convergent Iwasawa continued fractions that are not proper, most notably regular continued fractions on $\R$ and J.~Hurwitz continued fractions on $\C$. Likewise, one can construct proper but non-discrete Iwasawa continued fractions: for example, let $\X=\R$, $\Zee=\epsilon \Z$, and $K=(-\epsilon/2,\epsilon/2]$. The resulting continued fraction is generally not discrete, but will be convergent by the \'Sleszy\'nski-Pringsheim Theorem \cite{sleshinskiy} for $\epsilon<1/2$.

To prove ergodicity, we will need a further assumption of \emph{completeness}, which rules out hidden symmetries:
\begin{defi}[Completeness]
The Iwasawa continued fraction is \emph{complete} if one has $\Stab_\Mod(\infty)=\Zee$.

For an incomplete continued fraction, one may pass to the \emph{completion} by replacing $\Zee$ with the lattice $\Stab_\Mod(\infty)$ and making a corresponding modification to the fundamental domain $K$ and rounding function $[\cdot]$. This will result in what are often termed ``folded'' variants (see \S \ref{subsubsec:folded}).
\end{defi}

\begin{defi}
\label{defi:centrallySymmetric}
The Iwasawa continued fraction is \emph{incomplete with $n$ central symmetries} if there exists a set $\mathcal{R}\subset\Isom(\X)$ such that
\begin{enumerate}
    \item Every element of $\mathcal{R}$ fixes $0$, i.e., is a rotation around the origin,
    \item The only element of $\Zee$ to fix $0$ is the identity,
    \item $\Stab_\Mod(\infty)=\langle \Zee,\mathcal{R}\rangle$,
    \item Every element of $\Stab_\Mod(\infty)$ can be written uniquely  as $ra$ for some $r\in \mathcal{R}$, $a\in \Zee$, and uniquely as $a'r'$ for some $a'\in \Zee$, $r'\in \mathcal{R}$, and,
    \item $\mathcal R$ contains $n$ elements.
\end{enumerate}
The set $\mathcal{R}$ is said to be the set of central symmetries of $\Mod$. We say that the fundamental domain $K$ for $\Zee$ is symmetric if for any $r\in \mathcal{R}$, $rK$ is $K$ up to a set of measure zero.
\end{defi}

\subsection{Further Examples}\label{sec:examples}~
With all of our notation now in place, we may describe many examples of Iwasawa continued fractions. In Table \ref{tab:examples}, we list several types of continued fractions, and for each of them denote the Iwasawa inversion space $\X$ on which it exists; the lattice $\Zee$, which will often act by left-translation by a subset of $\X$; the fundamental domain $K$; the inversion, which in all cases will be identified by a $\iota$ signature; whether it is complete and proper (the columns C and P respectively); and some basic references.

\begin{table}
\caption{Examples of Iwasawa continued fractions. The examples in $\X^2_\R=\R^2$ are usually presented as complex CFs. See \S\ref{subsec:landscape} and \S \ref{sec:examples} for more information about the algorithms.} \label{tab:examples}

\begin{tabular}{| p{2.3cm} | c | p{2cm} | p{2.2cm} | c | c | c | p{1.5cm} | }
\hline
     Name:& $\X$ & $\Zee$ & $K$ & $\iota$ & C&P  & References  \\
     \hline
     \hline
     Regular & $\X_{\R}^1$ & $\Z$ & $[0,1)$ & $\iota_+$ & N&N & \cite{Series1} \\
     \hline
     Backwards & $\X_{\R}^1$ & $\Z$ & $[0,1)$ & $\iota_-$ & Y&N & see \S \ref{subsec:backwardsCF} \\
     \hline
     Nearest \mbox{Integer} & $\X_{\R}^1$ & $\Z$ & $\left[-\frac{1}{2},\frac{1}{2}\right)$ & $\iota_+$ & N&Y & \cite{IT} \\ 
     \hline
        Nearest \mbox{Integer} \mbox{(variant)} & $\X_{\R}^1$ & $\Z$ & $\left[-\frac{1}{2},\frac{1}{2}\right)$ & $\iota_-$ & Y&Y & \cite{Hurwitz} \\      \hline
     Folded Nearest Integer & $\X_{\R}^1$ & $\langle\Z,x\mapsto-x\rangle$ & $\left[0,\frac{1}{2}\right]$ & $\iota_+$ & Y&Y & \cite{MMY} \\
        \hline
     Nakada $\alpha$, $\alpha\in(0,1)$ & $\X_{\R}^1$ & $\Z$ & $[\alpha-1,\alpha]$ & $\iota_+$ & N&Y & cf.~\cite{AS,NakadaAlpha} \\ 
     \hline
     Even & $\X_{\R}^1$ & $2\Z$ & $[-1,1)$ & $\iota_-$ & Y & N & cf.~\cite{BL,KU2007} \\
      \hline
     Rosen for $q\in \mathbb{N}_{\ge 3}$ & $\X_{\R}^1$ & $\lambda \Z, \ \lambda=2\cos\frac{\pi}{q}$ & $\left[-\frac{\lambda}{2},\frac{\lambda}{2}\right)$ & $\iota_-$ & Y&Y  & \cite{MS}, cf.~\cite{BKS} \\
      \hline
     $\alpha$-Rosen for $q\in \mathbb{N}_{\ge 3}$& $\X_{\R}^1$ & $\lambda \Z, \ \lambda=2\cos\frac{\pi}{q}$ & $\left[\lambda(\alpha-1),\lambda\alpha\right)$, $\alpha\in[1/2,1/\lambda)$ & $\iota_-$ & Y&Y  & new, cf.~\cite{DKS} \\
     \hline
     \hline
     Hurwitz& $\X_{\R}^2$ & $\Z^2$ & $\left[-\frac{1}{2},\frac{1}{2}\right)^2$ & $\iota_c$ & N&Y & \cite{cijsouw,Hensley} \\
     \hline
     Folded \mbox{Hurwitz}  & $\X_{\R}^2$ & $\langle\Z^2,(x,y)\mapsto(-x,-y)\rangle$ & $\begin{aligned}&\left[-\tfrac{1}{2},\tfrac{1}{2}\right)\\ &\quad \times \left[-\tfrac{1}{2},0\right]\end{aligned}$ & $\iota_c$ & Y&Y & cf.~\cite{Pollicott}\\
          \hline
     Hurwitz Hexagonal  & $\X_{\R}^2$ & $\Z[\rho]$, with $\rho=\frac{1+\sqrt{-3}}{2}$ & Dirichlet region & $\iota_c$ & N&Y & \cite{MR1554754} \\
     \hline
     J.~Hurwitz or Tanaka& $\X_{\R}^2$ & $\{(a,b)\in\Z^2: a+b\text{ even}\}$ & Dirichlet region & $\iota_c$ & Y&N & \cite{cijsouw,MR800085} \\
          \hline
     Shallit  & $\X_{\R}^2$ & $\Z^2$ & See Rmk.~\ref{rmk:Shallit} & $\iota_c$ & N&N & \cite{cijsouw} \\
          \hline
     SKT  &$\X_{\R}^2$ & $\Z[\rho]$, with $\rho=\frac{1+\sqrt{-3}}{2}$ & $\begin{aligned}&\left[0,1\right)\rho  \\&\quad \times \left[0,1\right)\overline{\rho}\end{aligned}$ & $\iota_c$ & N&N & \cite{MR0429773} \\
     \hline
          Bianchi, $d=1,2,3,7,11$  & $\X_{\R}^2$ & $\mathcal{O}_d$, ring of integers & Dirichlet region & $\iota_c$ & N&Y & \cite{Dani,Hines} \\
     \hline
     \hline
          3d  & $\X_{\R}^3$ & $\Z^3$ & $\left[-\tfrac{1}{2},\tfrac{1}{2}\right)^3$ & $\iota_+$ & N&Y & new\\
     \hline
     \hline
     Quaternionic & $\X_{\R}^4$ & $\Z^4$ & $\left[-\tfrac{1}{2},\tfrac{1}{2}\right)^4$ & $\iota_c$ & N&N & \cite{Hamilton1,Hamilton2}\\
     \hline
     Hurwitz Quaternionic & $\X_{\R}^4$ & Hurwitz \mbox{integers} & Dirichlet region & $\iota_c$ & N&Y & \cite{Mennen}\\
     \hline
     \hline
     Octonionic & $\X_{\R}^8$ & Cayley \mbox{integers} & Dirichlet region & $\iota_c$ & N&Y & new\\
     \hline
     \hline
     Heisenberg & $\X_{\C}^1$ & $\Z^3$ & $\left[-\frac{1}{2},\frac{1}{2}\right)^3$ & $\iota_-$ & N&Y & \cite{LV}\\
     \hline
     Folded \mbox{Heisenberg} & $\X_{\C}^1$ & $\langle\Z^3,(z,t)\mapsto(\ii z,t)\rangle$ & $\begin{aligned}&\left[-\tfrac{1}{2},0\right]^2\\ &\quad \times \left[-\tfrac{1}{2},\tfrac{1}{2}\right)\end{aligned}$ & $\iota_-$ & Y&Y & new\\
     \hline     Heisenberg Hexagonal & $\X_{\C}^1$ & $\Z[\rho]\times \sqrt{3}\Z$ & See Ex. \ref{ex:HeisHex}  & $\iota_-$ & N&Y & new \\
     \hline
     \hline
          Heisenberg Quaternionic & $\X_{\mathcal{H}}^1$ & $(\Z^4\cup(\Z+1/2)^4)\times \Z^3$& Dirichlet region & $\iota_-$ & N&N & new\\
     \hline
\end{tabular}
\end{table}

It should be noted that all cases under consideration are discrete. 

In some cases where the fundamental domain is too complicated to write succinctly, we have labeled it with the Dirichlet region. In this case, we mean the set of points that are closer to $0$ than to any translate of $0$ under $\Zee$, with some choice of boundary. 

\begin{remark}\label{rmk:Shallit}
Note as well that the fundamental domain $K$ for the Shallit complex CF algorithm is a rectangle with corners at $.5-.5\ii$, $1$, $\ii$, and $-.5+.5\ii$ \cite{cijsouw}.
\end{remark}

The complex continued fractions, quaternionic continued fractions, and octonionic continued fractions are embedded in higher-dimensional real spaces in the standard way, $\C\cong \R^2$, $\mathcal{H}\cong \R^4$, and $\mathbb{O}\cong \R^8$. The inversion $\iota_c$ listed in all these cases is equivalent to $z\mapsto 1/z$ on $\C$, $\mathcal{H}$, or $\mathbb{O}$. 
One reason for identifying these spaces is that the existence of maximal orders, the Gaussian and Eisenstein integers in $\mathbb{C}$, the Hurwitz integers in $\mathcal{H}$, and the Cayley integers in $\mathbb{O}$, give rise to lattices on $\mathbb{R}^2$, $\mathbb{R}^4$, and $\mathbb{R}^8$ that in turn generate \emph{proper} fundamental domains $K$.
The Hurwitz integers in $\mathcal{H}$ are given by
\[\label{eq:Hurwitzintegers}\{a+b\ii+c\mathbbm{j}+d\mathbbm{k}:a,b,c,d\in\Z \text{ or }a,b,c,d\in \Z+1/2\}
\]
The Cayley integers in $\mathbb{O}$ are defined in Chapter 9 of \cite{CSbook} (where they are referred to by the less common name of octavian integers), with properness of the corresponding Dirichlet region following from Lemma 6 of that chapter. 

We should emphasize that Table \ref{tab:examples} does not cover all well-studied CF algorithms. For example, odd CFs \cite{BM}, CFs related to triangle groups \cite{CS}, CFs related to the Jacobi-Perron algorithm or other subtraction algorithms \cite{SchweigerBook}, regular chains \cite{Schmidt}, and general $(a,b)$-continued fractions \cite{KU2012} do not fit into our framework. The $N$-continued fractions \cite{DKvdW} and $u$-backwards continued fraction \cite{GH} use an $\iota$ which is not an inversion by our definition; however, our proofs could be modified to compensate. Regardless, they would still not be proper.

\begin{remark}
We are not the first to encounter problems with the incompleteness of the Hurwitz CF algorithm. Pollicott \cite{Pollicott} studied a similar folded continued fraction, albeit using conjugation in place of negation. Nakada \cite{Nakada} studied the full Hurwitz CF, but took as his hyperbolic space the disjoint union of two different spaces and let negation additionally act by swapping between the two.
\end{remark}

\subsection{Discreteness and Properness}

The difficulty of pushing into higher dimensions (either by taking $k\neq \R$ or $n\geq 2$) is in finding an appropriate lattice $\Zee$ and fundamental domain $K$ such that the resulting continued fraction is both discrete and proper.

The following proposition gives a useful framework for which to prove discreteness:
\begin{prop}\label{prop:easydiscrete}
Fix an Iwasawa inversion space $\X=\X^n_k$, an inversion $\iota$ that is either $\iota_+$, $\iota_-$, or $\iota_c$, and a discrete subring $R\subset k$ such that $2\in R$. Consider the subgroup $\Zee\subset \Isom(X)$ consisting of left-translations by points $(z,t)\in \X$ such that $z\in R^n$ and  $\Norm{z}^2+t\in R$. Then $\Mod=\langle \Zee, \iota\rangle\subset \Isom(\Hyp)$ is discrete.
\end{prop}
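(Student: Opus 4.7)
My plan is to exhibit, after a change of basis in the projective model of \S \ref{sec:hyp}, a realization of $\Mod$ in which every element has matrix entries in the discrete ring $R$, and then use the discreteness of $R$ in $k$ to force $\Mod$ to be discrete in $G$.

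By Lemma \ref{lemma:inversionform} and \eqref{eq:inversionmatrix}, each admissible inversion $\iota \in \{\iota_-, \iota_+, \iota_c\}$ is represented in $PU(J)$ by a matrix all of whose entries lie in $\{-1, 0, +1\}$. A direct calculation on $\phi(z, w) = (1, \sqrt{2}\, z, \Norm{z}^2 + w)$ from Definition \ref{defi:ProjectiveEmbedding} shows that left-translation by $(z_0, t_0) \in \X$ is realized projectively by
\[
M_{(z_0, t_0)} = \begin{bmatrix} 1 & 0 & 0 \\ \sqrt{2}\, z_0 & I_n & 0 \\ \Norm{z_0}^2 + t_0 & \sqrt{2}\, z_0^{*} & 1 \end{bmatrix}.
\]
The only obstruction to having entries in $R$ is the two $\sqrt{2}$ factors. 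I would clear them by conjugating the entire projective representation by $D = \operatorname{diag}(1, \sqrt{2}\, I_n, 1)$. This $D$ commutes with each inversion matrix (which is block anti-diagonal with a diagonal middle block), and transforms the translation into
\[
D^{-1} M_{(z_0, t_0)} D = \begin{bmatrix} 1 & 0 & 0 \\ z_0 & I_n & 0 \\ \Norm{z_0}^2 + t_0 & 2\, z_0^{*} & 1 \end{bmatrix}.
\]
Every entry now lies in $R$ by the hypotheses $z_0 \in R^n$, $\Norm{z_0}^2 + t_0 \in R$, and $2 \in R$, together with the fact that the discrete subrings arising in $\R$, $\C$, and $\mathcal{H}$ are automatically stable under conjugation (ensuring $z_0^{*} \in R^n$). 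Thus in the new coordinates each generator, hence each element, of $\Mod$ has a matrix representative with all entries in $R$.

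To conclude, I would argue discreteness by contradiction. Suppose there were a sequence of non-identity classes $[M_j] \in \Mod$ converging to $[\id]$ in the projective unitary group of the conjugated form $J' = D^{-1} J D^{-1}$. Lifting to $U(J')$, pick representatives $M_j$ with entries in $R$; there exist scalars $\lambda_j \in k$ with $\Norm{\lambda_j} = 1$ (from the central-scalar condition on $U(J')$) such that $\lambda_j M_j \to I$. Then $\Norm{M_j[i, \ell]} \to \delta_{i\ell}$ for every entry, so each $M_j[i, \ell] \in R$ lies eventually in a bounded subset of $R$; by discreteness of $R$, this is a finite set. Along a subsequence, $M_j$ is constant, equal to some $M \in U(J')$; since $\lambda_j M \to I$ with $\lambda_j$ scalar, the matrix $M$ must itself be a scalar, forcing $[M_j] = [M] = [\id]$ along the subsequence, contrary to assumption. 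Hence $\Mod$ is discrete in $G$.

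The principal technical step is the $\sqrt{2}$ rescaling via conjugation by $D$: this is precisely what requires the hypothesis $2 \in R$, because without it, iterated products of translation matrices would accumulate powers of $\sqrt{2}$ that admit no global normalization into $R$. Everything else is a standard sequential compactness argument exploiting the discreteness of $R$ in the finite-dimensional algebra $k$.
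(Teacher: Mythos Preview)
Your proof is correct and takes essentially the same approach as the paper: both exhibit a discrete, multiplicatively closed set of matrices containing the generators of $\Mod$. The paper does this by writing down a ``checkerboard'' set $S$ of matrices whose entries lie in $R$ or $\sqrt{2}R$ according to position and checking that $S$ is closed under products; your conjugation by $D=\operatorname{diag}(1,\sqrt{2}I_n,1)$ is just a cleaner repackaging of the same idea (conjugation by $D$ carries $S$ into $M_{n+2}(R)$), with the bonus that closure under multiplication becomes immediate from the ring structure of $R$. You are also more explicit than the paper about two points it glosses over: the conjugation-closure of $R$ needed to place $\bar z_0$ in $R^n$, and the passage from discreteness in $U(J')$ to discreteness in the projective quotient. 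One small quibble: the entry $2z_0^*$ already lies in $R^n$ because $r+r\in R$ for any additive group, so the hypothesis $2\in R$ is not actually what makes that step work.
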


\begin{example}
\label{ex:Heisenberg}
For example, in the case of the first Heisenberg group $\X^1_\C$, we might chose $R=\mathbb{Z}[\ii]$ so $z\in \mathbb{Z}[\ii]$ and $t\in \ii\mathbb{Z}$. 
\end{example}

\begin{proof}
We can embed $\Mod$ as a subgroup of $GL(n+2,k)$ by mapping $\iota$ to a matrix $J_\iota$ of the form \eqref{eq:inversionmatrix}, and left-translation by $(z,t)$ to the matrix $A_{(z,t)}$, where
\[\label{Eq:Aztmatrix}
A_{(z,t)}=\left[ \begin{array}{ccc} 1 & 0_n & 0 \\ \sqrt{2}z & \operatorname{id}_n & 0_n \\ \Norm{z}^2+t & \sqrt{2}\overline{z} & 1 \end{array}\right].
\]
It is now easy to check that $\Zee$ is a group.

Unless $\sqrt{2}\in R$, the matrices $A_{(z,t)}$ will not be matrices over $R$ itself. However, consider the discrete set $S$ of $(n+2)\times(n+2)$ matrices $(a_{i,j})_{i,j=1}^{n+2}$ such that $a_{i,j}\in \sqrt{2}R$ if $i$ or $j$ (but not both!) is equal to $1$ or $n+2$, and otherwise $a_{i,j}\in R$. It is easy to check that $S$ is closed under multiplication. Moreover, the generators $J_\iota$ and $A_{(z,t)}$ of $\Mod$ belong to $S$, so that $\Mod\subset S$, so $\Mod$ must be discrete.
\end{proof}

For the rest of this section, we will assume that all the hypotheses of Proposition \ref{prop:easydiscrete} are satisfied, so that the only remaining difficulty is proving properness. 

\begin{example}\label{ex:3d}
Let us consider higher-dimensional generalizations of the nearest-integers CFs. Let $k=\R$, $\X=\X_\R^n=\R^n$, for some $n\ge 1$, and $\iota=\iota_+$. The space $\R^n$ admits the standard lattice $\Zee=\Z^n$ with fundamental domain $K=[-1/2,1/2)^n$.

When $n=1$, we get the usual nearest-integer CFs. When $n=2$, we get a variant of the Hurwitz complex CFs ($\iota_+$ acts like $z\mapsto 1/\overline{z}$). When $n=3$, we get a 3d CF which we do not believe has been studied before. However, when $n\ge4$, the corresponding $K$ is no longer proper.
\end{example}

Examples \ref{ex:Heisenberg} and \ref{ex:3d} fit into the framework of Proposition \ref{prop:easydiscrete} very easily. However, in general, $t$ may not belong to the ring $R$, but does belong to the additive subgroup $R'$ of $\Im(R)$ defined by
\(
R'=\{t\in \Im(R): \Norm{z}^2+t\in R, \exists z\in R^n\}\subset  \Im(R).
\)
One shows that, as a set, we have $\Zee=R^n\times R'$. 

Let $K_1$ be the Dirichlet domain around $0$ for $R$ and let $K_2$ be the Dirichlet domain around $0$ for $R'$ with respect to the Euclidean metrics on $k^n$ and $\Im(k)$. Then a fundamental domain for $\Zee$ in $\X$ is given by $K=K_1^n\times K_2$. In particular, the radius of $K$ is 
\(
\rad(K)=\sqrt[4]{n^2\rad(K_1)^4+\rad(K_2)^2}.
\)
Thus, to obtain a proper system, we require $n^2\rad(K_1)^4+\rad(K_2)^2<1$.

\begin{example}\label{ex:HeisHex}
Suppose $k=\mathbb{C}$ and $R=\Z[\ii]$. Then we have  $R'=\ii\Z$, $K_1=[-1/2,1/2)^2$, $K_2=[-1/2,1/2)\ii$. In this case $\rad(K_1)=2^{-1/2}$ and $\rad(K_2)=2^{-1}$. When $n=1$, this implies that $K$ is proper, and results in the Heisenberg continued fractions in Table \ref{tab:examples} above. However, $\rad(K)<1$ only for $n=1$ and so this cannot be directly generalized to higher Heisenberg groups. 

It is tempting to get around this by replacing $R$ with $\Z[e^{2\pi \ii/3}]$, the Eisenstein integers, as then $K_1$ is a hexagon with radius $3^{-1/2}$. However, this gives $R'=\sqrt{3}\ii \Z$, so that $K_2=[-\sqrt{3}/2,\sqrt{3}/2)\ii$, and again $\rad(K)<1$ only for $n=1$.

We would, more generally, be interested in CFs on the Heisenberg group with coordinates related to the ring of integers of imaginary quadratic fields. However, if we use $R=\mathcal{O}_d$ for $d=2,7,11$, then the resulting fundamental domain $K_1\times K_2$ is not proper even when $n=1$.
\end{example}

\begin{example}\label{ex:hurwitzquat}
Let $k=\mathcal{H}$ be the quaternions, $n=1$, and $R$ the Hurwitz integers \eqref{eq:Hurwitzintegers}, so that $R'=\Z[\ii,\mathbbm{j},\mathbbm{k}]$. Then $\rad(K_1)=2^{-1/2}$ (see \cite{Mennen}) and $K_2=[-1/2,1/2)^3$ so $\rad(K_2)=\sqrt{3}/2$. In particular, if we look at $X^1_\mathcal{H}$, we have $\rad(K)=1$, narrowly missing the properness criterion. Other nearly-proper CF algorithms such as the J.~Hurwitz complex CFs are known to be convergent and ergodic, so we hope to be able to extend our results to this case.
\end{example}

\subsection{Completeness and Incompleteness}\label{sec:appendix}

We now demonstrate how one can identify complete CFs, or identify symmetries of incomplete CFs.

\begin{prop}\label{prop:firstcomplete}
CF algorithms  associated to $\X^1_\R$, $\Zee=\Z$, and $\iota_+(x)=1/x$ (e.g., regular or $\alpha$-CFs) are incomplete with two central symmetries. CF algorithms associated to  $\X^1_\R$, $\Zee=\Z$, and $\iota_-(x)=-1/x$ (e.g., backwards) are complete.
\begin{proof}
Let $\Mod_+$ and $\Mod_-$ be the modular groups associated to $\iota_+$ and $\iota_-$, respectively. We take advantage of the fact that one can embed $\Mod_-$ into $SL(2,\Z)$, while $\Mod_+$ naturally embeds into the larger $GL(2,\Z)$.

That is, we may identify elements of $\Zee$ and the inversions $\iota_\pm$ with matrices in $GL(2,\mathbb{Z})$, acting by the usual linear fraction transformations on $\R$, with
\(
\Zee=\left\{ A_n=\left(\begin{array}{cc} 1 & n \\ 0 & 1 \end{array}\right): n\in \mathbb{Z}\right\} \qquad \iota_\pm = \left( \begin{array}{cc} 0 & \pm 1 \\ 1 & 0 \end{array}\right).
\)
(Note that in the standard convention, translations  act by upper-triangular matrices, cf.~\eqref{Eq:Aztmatrix}.) 
To test for completeness, note that matrices in $\Stab_{\Mod_\pm}(\infty)$ have the form
\(
\left( \begin{array}{cc} a & b\\ 0 & d\end{array}\right).
\)
Since $a,d\in \Z$ and $\norm{ad}=1$, $a,d$ must be units, so we can decompose the matrix as 
\(
\left( \begin{array}{cc} a & b\\ 0 & d\end{array}\right)=\left( \begin{array}{cc} 1 & b(d^{-1}) \\ 0 & 1\end{array}\right) \left( \begin{array}{cc} a & 0\\ 0 & d\end{array}\right),
\)
a product of an element of $\Zee$ and a diagonal matrix. So the only things that can potentially cause incompleteness are diagonal matrices in $\Mod$. Since the only diagonal matrices in $SL(2,\Z)$ are $\pm I$, which act by the identity, we can conclude $\Mod_-=\Stab_{\Mod_-}(\infty)$.

For $GL(2,\Z)$, the only potential additional symmetry is given by $x\mapsto -x$, corresponding to a diagonal matrix with $a=-d$. Indeed, this is contained in $\Mod_+$, represented by the word $\iota A_1 \iota A_{-1} \iota A_{1}$. In particular, CFs associated with $\iota_+$ are incomplete with $2$ central symmetries.
\end{proof}
\end{prop}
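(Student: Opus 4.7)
The plan is to represent the modular group $\Mod_\pm = \langle \Zee, \iota_\pm\rangle$ faithfully inside a $2\times 2$ integer matrix group acting by linear fractional transformations on $\R\cup\{\infty\}$, and then reduce the completeness question to an elementary determinant calculation. Specifically, I would send a translation $x\mapsto x+n$ to $A_n = \bigl(\begin{smallmatrix}1 & n\\ 0 & 1\end{smallmatrix}\bigr)$ and the inversions $\iota_\pm$ to $\bigl(\begin{smallmatrix}0 & \pm 1\\ 1 & 0\end{smallmatrix}\bigr)$. Since $\det\iota_- = 1$ and $\det A_n = 1$, we land in $SL(2,\Z)$ for $\Mod_-$; since $\det\iota_+ = -1$, we only land in $GL(2,\Z)$ for $\Mod_+$. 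The key structural observation is that a Möbius transformation fixes $\infty$ iff its matrix is upper triangular, and an upper-triangular integer matrix of determinant $\pm 1$ must have diagonal entries $a,d\in\{\pm 1\}$, so it factors uniquely as a product of a unitriangular matrix (an element of $\Zee$) and a diagonal matrix. Completeness therefore reduces to the question: which diagonal matrices lie in $\Mod_\pm$?

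For $\Mod_-\subset SL(2,\Z)$, the only diagonal matrices with determinant $1$ and entries in $\{\pm 1\}$ are $\pm I$, both of which act as the identity on $\R$, so $\Stab_{\Mod_-}(\infty)$ coincides with $\Zee$ and the CF is complete. For $\Mod_+\subset GL(2,\Z)$, one additionally gets the diagonals with $a=-d$, which act on $\R$ by the nontrivial central symmetry $x\mapsto -x$. I would verify that this symmetry actually lies in $\Mod_+$ by using the word equation $\iota_+ A_1 \iota_+ A_{-1} \iota_+ A_1$ already exhibited in \eqref{eq:invexample}; expanded as iterated fractions, this simplifies to $z\mapsto -z$. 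Thus $\Stab_{\Mod_+}(\infty) \supsetneq \Zee$, so the CF is incomplete.

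To get precisely two central symmetries in the sense of Definition \ref{defi:centrallySymmetric}, I would set $\mathcal{R} = \{\id, x\mapsto -x\}$ and check the five conditions. Both elements of $\mathcal{R}$ fix $0$, and no nontrivial translation in $\Zee$ does; $|\mathcal{R}|=2$; the preceding paragraph shows $\Stab_{\Mod_+}(\infty)=\langle \Zee,\mathcal{R}\rangle$; and the unique factorizations in both orders $ra = a'r'$ follow from the matrix-level decomposition of an upper-triangular matrix as unitriangular times diagonal (with the ordering on the other side obtained by conjugating: $\bigl(\begin{smallmatrix}-1&0\\0&1\end{smallmatrix}\bigr) A_n = A_{-n}\bigl(\begin{smallmatrix}-1&0\\0&1\end{smallmatrix}\bigr)$).

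The only real subtlety, and the step I would flag as the main obstacle, is making sure the word produced for $x\mapsto -z$ really represents an element of $\Mod_+$ under our identification (in particular that signs and the fact that inversions correspond to $\iota_+$ rather than $\iota_-$ are tracked correctly); but this is a direct computation that matches \eqref{eq:invexample}. Everything else is a consequence of the rigidity of integer upper-triangular matrices of unit determinant.
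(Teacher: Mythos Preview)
Your proposal is correct and follows essentially the same argument as the paper: embed $\Mod_\pm$ in $GL(2,\Z)$ via the indicated matrices, reduce the stabilizer of $\infty$ to upper-triangular matrices, factor these as unitriangular times diagonal, and then use the determinant constraint to classify the diagonal factor; the word $\iota_+ A_1 \iota_+ A_{-1} \iota_+ A_1$ realizing $x\mapsto -x$ is exactly the one the paper uses. Your explicit verification of the five conditions in Definition~\ref{defi:centrallySymmetric} is a welcome addition that the paper leaves implicit.
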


A proof similar to the above also implies that the Rosen CFs are complete.

\begin{prop}\label{prop:secondcomplete}
Let $k$ be the complex, quaternionic, or octonionic division algebra, with $\Zee$ given by translation by Gaussian  or Eisenstein integers, quaternionic or Hurwitz integers, or Cayley integers respectively. Any $k$-CFs with associated with $\Zee$ and an inversion of either $z\mapsto 1/z$ or $z\mapsto -1/z$ is incomplete with at least two central symmetries.
\begin{proof}
One argues along the same lines as the proof of Proposition \ref{prop:firstcomplete}, embedding $\Mod$ into $GL(2,\mathcal{O}_k)$, where $\mathcal{O}_k$ is the corresponding ring of integers. If $\iota(z)=1/z$ then $\iota A_1 \iota A_{-1} \iota A_{1}$ is again the central symmetry $z\mapsto -z$. If $\iota(z)=-1/z$, then the central symmetry $z\mapsto -z$ can be represented by the word $\iota A_{i} \iota A_{-i} \iota A_i$. In the Hurwitz complex CF case, no other central symmetries can be obtained because the matrices of $GL(2,\mathcal{O}_k)$ obtained by the embedding have determinant $\pm 1$, and hence the only diagonal matrices have $a=\overline{d}$ or $a=-\overline{d}$.
\end{proof}
\end{prop}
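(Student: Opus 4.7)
The plan is to mimic the proof of Proposition \ref{prop:firstcomplete}: embed $\Mod$ into $GL(2,\mathcal{O}_k)$, decompose $\Stab_\Mod(\infty)$ into (diagonal matrices) $\cdot\, \Zee$, and then exhibit a non-scalar diagonal matrix inside $\Mod$ as an explicit product of generators; such a matrix corresponds to a non-identity rotation about $0$ and so provides the second element of $\mathcal{R}$ in Definition \ref{defi:centrallySymmetric}.

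After setting up the embedding, with $A_\alpha$ corresponding to $\begin{pmatrix} 1 & \alpha \\ 0 & 1 \end{pmatrix}$ and $\iota$ to either $\begin{pmatrix} 0 & \mp 1 \\ 1 & 0 \end{pmatrix}$, I would produce the required diagonal element by direct matrix multiplication. For $\iota(z)=1/z$, the three-inversion word $\iota A_1 \iota A_{-1} \iota A_1$ multiplies out to the diagonal matrix whose M\"obius action is $z\mapsto -z$; since it uses only $\pm 1\in\mathcal{O}_k$, the same word works uniformly across the Gaussian, Eisenstein, Hurwitz, and Cayley cases. For $\iota(z)=-1/z$, an analogous computation shows that for any unit $\beta\in\mathcal{O}_k$ the word $\iota A_{\beta^{-1}}\iota A_{-\beta}\iota A_{\beta^{-1}}$ collapses (forcing $\alpha\beta = \gamma\beta = 1$ in the off-diagonal entries) to $\mathrm{diag}(-\beta,-\beta^{-1})$, acting as $z\mapsto \beta z\beta$. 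In each of the four rings one can then choose a unit $\beta\notin\{\pm 1\}$: take $\beta=i$ for the Gaussian, Hurwitz, and Cayley integers, and $\beta=\rho=e^{2\pi i/3}$ for the Eisenstein integers. The resulting diagonal element lies in $\Stab_\Mod(\infty)\setminus\Zee$ and is a non-identity rotation fixing $0$, so together with the identity it gives $|\mathcal{R}|\geq 2$, establishing incompleteness with at least two central symmetries.

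The main obstacle I anticipate is the Eisenstein case under $\iota(z)=-1/z$: since no Eisenstein integer squares to $-1$, the concrete symmetry produced is $z\mapsto \rho^2 z$ rather than $z\mapsto -z$, so the argument must be phrased in terms of an abstract unit $\beta$ rather than the specific $\beta=i$ used in the Gaussian computation. A secondary technicality is the non-commutativity of the quaternionic and octonionic settings, where $\mathrm{diag}(-\beta,-\beta^{-1})$ acts as $z\mapsto \beta z \beta$ rather than as scalar multiplication by $\beta^2$; one must still check that this is a non-identity map, which follows from the fact that left and right multiplication by a unit are both Cygan isometries fixing the origin and that $\beta z\beta\neq z$ on a generic $z$ once $\beta\neq\pm 1$.
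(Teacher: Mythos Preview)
Your approach is the same as the paper's: embed $\Mod$ in $GL(2,\mathcal O_k)$ and exhibit an explicit three-inversion word that multiplies out to a non-scalar diagonal matrix. For $\iota(z)=1/z$ you use exactly the paper's word $\iota A_1\iota A_{-1}\iota A_1$. For $\iota(z)=-1/z$ the paper simply writes down $\iota A_i\iota A_{-i}\iota A_i$, while you parametrize by a general unit $\beta$; this is a genuine improvement, since $i\notin\Z[\rho]$ and so the paper's word is unavailable in the Eisenstein case, whereas your choice $\beta=\rho$ works. Your observation that in the Eisenstein case one obtains $z\mapsto\rho^2 z$ rather than $z\mapsto -z$ is exactly right and still yields the required non-identity element of $\mathcal R$.

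One small slip: with $\iota=\begin{pmatrix}0&-1\\1&0\end{pmatrix}$, the general computation of $\iota A_\alpha\iota A_\gamma\iota A_\delta$ gives the off-diagonal entries $\alpha\gamma-1$ and $1-\gamma\delta$, so diagonality forces $\alpha=\delta=\gamma^{-1}$. Your claimed result $\mathrm{diag}(-\beta,-\beta^{-1})$ is correct, but it comes from the word $\iota A_{\beta^{-1}}\iota A_{\beta}\iota A_{\beta^{-1}}$ (middle entry $\beta$, not $-\beta$); with your stated word the off-diagonal terms do not vanish. This is a harmless sign error and does not affect the argument.
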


\begin{prop}
The J.~Hurwitz complex CF algorithm is complete.
\begin{proof}
As in Proposition \ref{prop:secondcomplete}, we embed $\Mod$ into $GL(2,\Z[\ii])$, with $\iota=\iota_+$ and $\Zee=\{A_n:n\in(1+\ii)\Z[\ii]\}$. However, by taking $\Mod$ modulo $4$ and performing an exhaustive computational search, one can confirm that the central symmetry $z\mapsto -z$ never appears.
\end{proof}
\end{prop}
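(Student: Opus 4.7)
The plan is to follow the approach of Propositions \ref{prop:firstcomplete} and \ref{prop:secondcomplete} and reduce the completeness of the J.~Hurwitz algorithm to a finite group-theoretic computation modulo $4$. First, I embed $\Mod$ into $GL(2,\Z[\ii])$ by sending each translation $A_n$ (with $n \in (1+\ii)\Z[\ii]$) to $\bigl(\begin{smallmatrix} 1 & n \\ 0 & 1 \end{smallmatrix}\bigr)$ and the inversion to $\bigl(\begin{smallmatrix} 0 & 1 \\ 1 & 0 \end{smallmatrix}\bigr)$. Following the factorization argument of Proposition \ref{prop:firstcomplete}, any element of $\Stab_\Mod(\infty)$ is upper triangular and decomposes as a $\Zee$-translation followed by a diagonal matrix $\operatorname{diag}(a,d)$. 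Since matrices coming from the embedding have determinant $\pm 1$ and $a,d \in \Z[\ii]^\times = \{\pm 1,\pm\ii\}$, the constraint $ad=\pm 1$ forces $a/d \in \{\pm 1\}$. Projectively, the only candidate hidden symmetry is thus the central negation $z \mapsto -z$, represented by $\operatorname{diag}(1,-1)$.

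Second, I pass to the finite quotient $\pi\colon GL(2,\Z[\ii]) \to GL(2,\Z[\ii]/4\Z[\ii])$. Because $-1 \not\equiv 1 \pmod{4\Z[\ii]}$, the map $\pi$ separates $\operatorname{diag}(1,-1)$ from the identity and from every projectively trivial scalar matrix. If $\operatorname{diag}(1,-1)$ were to lie in $\Mod$, its image would lie in $\pi(\Mod)$, which is the subgroup of the finite group $GL(2,\Z[\ii]/4\Z[\ii])$ generated by $\pi(\iota)$ together with the finitely many $\pi(A_n)$, where $n$ ranges over coset representatives of $(1+\ii)\Z[\ii]$ in $\Z[\ii]$ modulo $4\Z[\ii]$.

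I would then compute $\pi(\Mod)$ by a breadth-first enumeration, starting from these generator images and repeatedly forming products until no new elements appear; the search terminates because the ambient group is finite. If $\pi(\operatorname{diag}(1,-1))$ is absent from the resulting list, then $z \mapsto -z \notin \Mod$, so $\Stab_\Mod(\infty) = \Zee$ and the algorithm is complete. The main obstacle is purely computational: the group $\pi(\Mod)$ may be sizable, so the enumeration must be implemented carefully, with matrices normalized up to projective scaling by units of $\Z[\ii]/4\Z[\ii]$ to avoid spurious duplication. The modulus $4$ is chosen to be just large enough to separate $-1$ from $1$ inside $\Z[\ii]$ while keeping $GL(2,\Z[\ii]/4\Z[\ii])$ small enough that the search is tractable; a smaller modulus fails, since $-1 \equiv 1 \pmod{(1+\ii)^2}$.
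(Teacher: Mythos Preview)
Your approach is essentially identical to the paper's: embed $\Mod$ into $GL(2,\Z[\ii])$, reduce the completeness question to whether the diagonal symmetry $z\mapsto -z$ lies in $\Mod$, and settle this by reducing modulo $4$ and exhaustively enumerating the resulting finite group. You supply more implementation detail than the paper (which simply asserts the computational search), but the underlying strategy is the same.
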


\begin{prop}
Standard Heisenberg continued fractions are incomplete with four central symmetries.
\begin{proof}
Embed $\Mod$ into $GL(3,\Z[\ii])$ using \eqref{Eq:Aztmatrix}. Diagonal matrices then correspond to the rotations $(z,t)\mapsto (\ii^k z,t)$. All four of these are, in fact, realized, since one has
\(
\iota A_{(0,1)}\iota A_{(0,1)} \iota A_{(0,1)} = \left( \begin{array}{ccc} -\ii & 0 & 0\\ 0 & 1 & 0 \\ 0 & 0 & -\ii\end{array}\right),
\)
corresponding to the rotation $(z,t)\mapsto(\ii z,t)$.
\end{proof}
\end{prop}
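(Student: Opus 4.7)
The plan is to mimic the strategy of Propositions \ref{prop:firstcomplete} and \ref{prop:secondcomplete}: embed $\Mod$ as a subgroup of $GL(3, \Z[\ii])$ via \eqref{Eq:Aztmatrix} together with the matrix representation of $\iota_-$ from \eqref{eq:inversionmatrix}, then identify $\Stab_\Mod(\infty)$ with the block-upper-triangular matrices in $\Mod$. Since $\infty$ corresponds to the projective point $(0:0:1)$ under $\Phi$, stabilizing $\infty$ forces a matrix in $\Mod$ to be upper-triangular with respect to the first and last rows, and then the Hermitian-form-preserving property (namely that we lie in $U(J)$) further forces the $(1,1)$ and $(3,3)$ entries to be multiplicative inverses of units in $\Z[\ii]$, while the $2\times 2$ middle block must itself be unitary over $\Z[\ii]$, i.e., multiplication by a unit $\lambda \in \{1,-1,\ii,-\ii\}$.

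Next, I would factor any element of $\Stab_\Mod(\infty)$ as a product of a translation matrix $A_{(z,t)}$ (with $z\in \Z[\ii]$, $t\in \ii\Z$, as in Example \ref{ex:Heisenberg}) and a diagonal matrix of the form $\operatorname{diag}(\lambda, \lambda, \lambda)$ — equivalently, after normalization, $\operatorname{diag}(1,\lambda,1)$ acting on the Heisenberg boundary as the rotation $(z,t)\mapsto(\lambda z,t)$. This gives a candidate set of central symmetries $\mathcal{R}=\{(z,t)\mapsto(\lambda z, t) : \lambda\in\{1,-1,\ii,-\ii\}\}$, each of which visibly fixes the origin and which, together with $\Zee$, accounts for every possible upper-triangular element of the embedding.

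The remaining step — and the only one requiring a computation — is to verify that each of these four candidate rotations is \emph{actually} realized inside $\Mod=\langle \Zee, \iota_-\rangle$. For this I would exhibit the explicit word $\iota_- A_{(0,\ii)} \iota_- A_{(0,\ii)} \iota_- A_{(0,\ii)}$ (the natural Heisenberg analogue of the word $\iota A_1\iota A_{-1}\iota A_1$ used in the proof of Proposition \ref{prop:firstcomplete}) and compute it in $GL(3,\Z[\ii])$, checking that it yields the diagonal matrix giving the rotation $(z,t)\mapsto(\ii z,t)$ of order four. Iterating this word produces the other three elements of $\mathcal R$, so $\Stab_\Mod(\infty)=\langle \Zee,\mathcal R\rangle$ with $|\mathcal R|=4$.

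Finally, I would confirm the bookkeeping conditions of Definition \ref{defi:centrallySymmetric}: the only element of $\Zee$ fixing $0$ is the identity (since $\Zee$ acts by left-translations by nontrivial Heisenberg elements), and the uniqueness of the decompositions $ra=a'r'$ follows because multiplication by $\lambda\in\{\pm 1,\pm \ii\}$ preserves $\Z[\ii]\times\ii\Z$ setwise, so pushing any central symmetry past a translation yields another translation, giving a semidirect-product structure $\Stab_\Mod(\infty)\cong \Zee\rtimes \mathcal R$. The main obstacle is really just the matrix multiplication in the explicit word, which is routine but needs to be carried out carefully because of the $\sqrt{2}$ entries in \eqref{Eq:Aztmatrix}.
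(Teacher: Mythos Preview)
Your proposal is correct and follows essentially the same approach as the paper: embed $\Mod$ in $GL(3,\Z[\ii])$, identify the possible central symmetries as the diagonal matrices acting by $(z,t)\mapsto(\ii^k z,t)$, and realize the order-four rotation via the explicit word $(\iota A_{(0,\ii)})^3$ (the paper writes $A_{(0,1)}$ for the same Heisenberg translation, identifying $\Im(\C)$ with $\R$). Your write-up is in fact more thorough than the paper's --- you spell out the $U(J)$ constraint on diagonal entries and verify the conditions of Definition~\ref{defi:centrallySymmetric}, which the paper leaves implicit; the only slip is the stray phrase ``$\operatorname{diag}(\lambda,\lambda,\lambda)$'' (a scalar matrix, hence projectively trivial) before you correct to $\operatorname{diag}(1,\lambda,1)$, and your worry about the $\sqrt{2}$ entries is moot here since $z=0$.
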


\section{Convergence}\label{sec:convergence}

Convergence in the specific case of proper and discrete Iwasawa continued fractions with $k=\C$, $n=1$, and $\Zee$ left-translations by the integer Heisenberg group was given in \cite{LV}, Lemma 3.19 through Theorem 3.21. We now extend this to the following result:

\begin{thm}
\label{thm:convergence}
Fix a proper and discrete Iwasawa continued fraction algorithm, and let $x\in K$. If $x$ has infinitely many CF digits, then the convergents $M_i(0)$ converge to $x$; otherwise, if $x$ has exactly $i$ CF digits, then $M_i(0)=x$.
\end{thm}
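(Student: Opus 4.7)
The plan is to adapt the convergence proof of \cite{LV}, given for Heisenberg continued fractions, to the full Iwasawa setting, isolating the single step that relied on discreteness of the Gaussian integers and replacing it with a Ford-circle argument based on discreteness of $\Mod$ in $G$. Everything else transfers essentially verbatim, so the outline below is organized around three ingredients: a distance--denominator estimate, geometric growth of denominators under properness, and a discrete lower bound on denominators.

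First I would express $d(x, M_i(0))$ in terms of ``denominators''. Since $x = M_i(x_i)$ with $x_i \in K$, we have $d(x, M_i(0)) = d(M_i(x_i), M_i(0))$. Writing $M_i$ as a product of left-translations and copies of $\iota$ and telescoping the Koranyi identity $d(\iota_- h, \iota_- h') = d(h,h')/(\norm{h}\norm{h'})$ from Theorem \ref{thm:InversionIdentities} through this composition, one isolates natural denominators $q_i$---essentially the gauges of $M_i^{-1}(\infty)$ via Lemma \ref{lemma:thirdcoordinate}, read off the bottom entry of the matrix representation \eqref{Eq:Aztmatrix}---and arrives at an estimate of the form
\begin{equation*}
d(x, M_i(0)) \;\le\; \frac{\rad(K)}{\norm{q_i}\,\norm{q_{i-1}}}.
\end{equation*}
Convergence thus reduces to showing $\norm{q_i}\,\norm{q_{i-1}} \to \infty$.

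Next, properness $\rad(K) < 1$ produces geometric growth. The recursion $M_{i+1} = M_i \iota^{-1} a_{i+1}$ induces a two-term recursion on the denominators whose multiplier is controlled by the translation $a_{i+1}$. Since $\iota(T^i x)$ lies outside the open unit Cygan ball while its translate $a_{i+1}^{-1}\iota(T^i x) = T^{i+1} x$ lies in $K \subset \overline{B(0, \rad K)}$, the gauge of $a_{i+1}$ is bounded below by a constant strictly larger than $1$, and a standard calculation in the spirit of \'Sleszy\'nski--Pringsheim gives $\norm{q_{i+1}}/\norm{q_i}$ eventually bounded below by some $\mu > 1$, \emph{provided} $\norm{q_i}$ does not accumulate at $0$.

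Finally, one must rule out $\liminf \norm{q_i} = 0$. In \cite{LV} this was immediate from the discreteness of the Gaussian integers, but in the general Iwasawa case the matrix entries of elements of $\Mod$ need not lie in any discrete subring of $k$, so I would instead prove Lemma \ref{lemma:discreteness} via a Ford-circle argument. For each $M \in \Mod \setminus \Stab_\Mod(\infty)$, the quantity $\norm{q_M}^{-2}$ is the Cygan size of the horoball at $M(\infty) \in \X$ obtained as the $M$-image of a fixed large horoball at $\infty$. Were $\norm{q_i}$ allowed to be arbitrarily small, infinitely many $\Zee = \Stab_\Mod(\infty)$-cosets in $\Mod$ would have representatives carrying $\infty$ into a fixed bounded Cygan region, and the corresponding horoballs would pile up on a compact piece of $\Hyp$, contradicting discreteness of the $\Mod$-orbit of any point of $\Hyp$. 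This yields the uniform lower bound $\norm{q_i} \ge c > 0$ and, combined with the previous step, forces $d(x, M_i(0)) \to 0$; the finite case is trivial since $x_i = 0$ gives $M_i(0) = M_i(x_i) = x$. The Ford-circle step is the main obstacle, being the only place where the argument genuinely needs input beyond the translation of \cite{LV}.
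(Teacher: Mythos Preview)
Your proposal is correct and follows the same strategy as the paper: defer to the convergence argument of \cite{LV} for the distance--denominator estimate and the properness-driven growth of $\Norm{q_i}$, and replace the one step that used discreteness of the Gaussian integers (Lemma~3.20 of \cite{LV}) with a Ford-circle horoball argument based on discreteness of $\Mod$.

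The only difference is in how the Ford-circle step is executed. The paper invokes the Margulis thick-thin decomposition (Theorem~\ref{thm:thick-thin}) to obtain a horoball $\mathcal B_\infty(C_0)$ whose $\Mod$-translates based at distinct parabolic fixed points are pairwise disjoint; disjointness from $\mathcal B_\infty(C_0)$ itself forces $\height_\infty(M_m\mathcal B)<C_0$, while a direct computation gives $\height_\infty(M_m\mathcal B)\gtrsim\Norm{q_m}^{-1}$, and the lower bound on $\Norm{q_m}$ follows in one line (Lemma~\ref{lemma:discreteness}). Your version instead argues by accumulation of arbitrarily large horoballs over a bounded base region, contradicting discreteness of a $\Mod$-orbit. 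This is the same idea, but as written it needs a little more care: horoballs are non-compact, so ``piling up on a compact piece of $\Hyp$'' does not immediately force orbit accumulation at a point of $\Hyp$---the preimages of a fixed compact set could escape to the cusp inside $\mathcal B$. The clean way to close this is exactly the disjointness coming from the Margulis lemma, which is what the paper uses.
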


As the proof is nearly identical to that in \cite{LV} with some notational changes, we only highlight the general method and the new aspects of the proof. In \cite{LV}, convergence is proven by extending a regular CF formula for the distance between a point and its convergents, which  reads as follows:
\(
d(x,M_i(0))= \dfrac{\prod_{j=0}^i \norm{T^j x}}{\Norm{q_i}^{1/2}}
\)
where $q_i$ is the denominator of $M_i(0)$ (see below for a more precise definition). The proof of this formula extends unchanged from the Heisenberg case, and since $\norm{T^j x}\le \rad K<1$ for proper Iwasawa CFs, convergence is immediate provided $\Norm{q_i}$ is bounded away from $0$. It is this last point where new techniques are required.  In Lemma 3.20 of \cite{LV}, the discreteness of the Gaussian integers was used to prove that $q_i\neq 0$, and thus, since $q_i\in\mathbb{Z}[\ii]$, we must have that $\Norm{q_i}\ge 1$. However, in the general Iwasawa CF case, the rings generated by the coefficients of $\Mod$ (in a given matrix representation) need not be discrete, so a new technique is needed.

We proceed by first fixing a proper and discrete Iwasawa continued fraction algorithm. Note that we will not use properness explicitly, but it is necessary for the remainder of the proof in \cite{LV}.

Recall from \S \ref{sec:hyp} that $\Hyp$ is the set $\{h=(z,w)\in k^n\times k \st \Re(w)>0\}$ with boundary $\partial \Hyp=\X$. The coordinate $\Re(w)$ is the \emph{horoheight (at infinity)} $\height_\infty(h)$. Restricting horoheight from below produces a \emph{horoball at $\infty$}, and applying a mapping $M\in \Mod$ produces a horoball at the point $M(\infty)$. These can be defined directly using the horoheight  $\height_{M(\infty)}(h):=\height_{\infty}(M^{-1}(h))$. It follows from the characterization of horoballs as limits of metric balls that horoballs are geodesically convex. We denote the horoball of height $C$ based at a point $M(\infty)$ by $\mathcal B_{M(\infty)}(C)=\{h\in \Hyp\st \height_{M(\infty)}(h)\geq C\}$.

The following generalizes the disjointness result for Ford circles:
\begin{thm}
\label{thm:thick-thin}
There exists $C_0>0$ such that for every $C\geq C_0$ and $M_1, M_2\in \Mod$ satisfying $M_1(\infty)\neq M_2(\infty)$, the horoballs $\mathcal B_{M_1(\infty)}(C)$ and $\mathcal B_{M_2(\infty)}(C)$ are disjoint.
\begin{proof}[Sketch of Proof]
The result follows from the Margulis Lemma by way of the Thick-Thin Decomposition (see, e.g., \S5.10 of Thurston's notes \cite{ThurstonNotes}) of the quotient orbifold $\Mod\backslash \Hyp$, which has a cusp corresponding to the point $\infty$. To see that it has this cusp, note that the translation length for elements of $\Zee\subset \Hyp$ goes to zero at large horoheight (note that one can compare actions at different horoheights by conjugating by the dilation $\delta_r$), so that a horoball of sufficiently large horoheight must be contained in the thin part of $\Mod \backslash \Hyp$.
\end{proof}
\end{thm}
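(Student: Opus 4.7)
The plan is to apply the Margulis Lemma to the discrete group $\Mod \subset G$ and exploit the cusp structure at $\infty$. By the $\Mod$-equivariance of horoballs, namely $M \mathcal B_{p}(C) = \mathcal B_{Mp}(C)$ isometrically for any $M \in \Mod$, it suffices to show that there exists $C_0 > 0$ such that for every $C \geq C_0$ and every $M \in \Mod$ with $M\infty \neq \infty$, the horoballs $\mathcal B_\infty(C)$ and $\mathcal B_{M\infty}(C)$ are disjoint; applying $M_1$ to this statement then yields the desired result for arbitrary $M_1, M_2$.

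I would first establish that $\infty$ is indeed a cusp of $\Mod \backslash \Hyp$, by showing that small translates by elements of $\Zee \subseteq \Stab_\Mod(\infty)$ can be found at every point of a sufficiently high horoball. The dilation $\delta_r \in G$ conjugates the horosphere $\{\height_\infty = h\}$ isometrically to $\{\height_\infty = r^2 h\}$ and rescales the Cygan-type metric induced on it; since $\Zee$ acts on each horosphere by translations with respect to this metric, the minimal hyperbolic displacement of nontrivial elements of $\Zee$ at points of $\{\height_\infty = h\}$ tends to $0$ as $h \to \infty$. Consequently, for any $\epsilon > 0$ there is a threshold $C_\epsilon$ so that every $p \in \mathcal B_\infty(C_\epsilon)$ admits some nontrivial $a \in \Zee$ with $d_\Hyp(ap, p) < \epsilon$.

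Next, I would invoke the Margulis Lemma: there is a universal constant $\epsilon_0 > 0$ (depending only on the type and dimension of $\Hyp$) such that for any discrete subgroup $\Gamma \subset G$ and any $p \in \Hyp$, the group $\Gamma_{\epsilon_0}(p) := \langle g \in \Gamma : d_\Hyp(gp, p) < \epsilon_0 \rangle$ is virtually nilpotent. Set $C_0 := C_{\epsilon_0}$ and suppose for contradiction that there exist $C \geq C_0$, $M \in \Mod$ with $M\infty \neq \infty$, and $p \in \mathcal B_\infty(C) \cap \mathcal B_{M\infty}(C)$. By the previous paragraph applied to $p$ (which lies in $\mathcal B_\infty(C_0)$) and to $M^{-1}p$ (which lies in $\mathcal B_\infty(C_0)$ after applying $M^{-1}$), we obtain nontrivial $a \in \Stab_\Mod(\infty)$ and nontrivial $b \in M \Stab_\Mod(\infty) M^{-1} \subseteq \Stab_\Mod(M\infty)$, both displacing $p$ by less than $\epsilon_0$. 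Hence $\langle a, b\rangle \subseteq \Mod_{\epsilon_0}(p)$ is virtually nilpotent.

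The main obstacle is the final step: deriving a contradiction from the coexistence of two parabolic elements with distinct fixed points in a virtually nilpotent discrete subgroup of $G$. This is a classical fact for rank-one symmetric spaces of non-compact type: any virtually nilpotent discrete subgroup of isometries is elementary, and in particular all of its parabolic elements share a common fixed point at infinity (otherwise a ping-pong argument produces a free loxodromic subsemigroup, contradicting polynomial growth). Since $a$ fixes $\infty$ while $b$ fixes $M\infty \neq \infty$, we obtain a contradiction, and the claim follows with $C_0$ as above.
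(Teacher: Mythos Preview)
Your proposal is correct and follows essentially the same approach as the paper's sketch: both invoke the Margulis Lemma together with the observation (via conjugation by the dilation $\delta_r$) that $\Zee$-translation lengths tend to zero at large horoheight, so that $\infty$ is a genuine cusp. You have simply unpacked the thick-thin decomposition argument in more detail than the paper's two-sentence sketch, spelling out the reduction by $\Mod$-equivariance, the construction of the short $a,b$ at the putative intersection point, and the elementary-group contradiction for two parabolics with distinct fixed points inside a virtually nilpotent discrete subgroup.
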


We can conclude, in particular, that horoballs based at points other than $\infty$ are quantitatively bounded with respect to horoheight from $\infty$.
\begin{cor}
\label{cor:smallballs}
Let $\mathcal B=\mathcal B_\infty(h_1)$ be a horoball of height $h_1$ based at $\infty$. Then for every $M\in \Mod$ satisfying $M(\infty) \neq \infty$, one has \(\height_\infty(M(\mathcal B)):=\sup \{\height_\infty(h) \st h\in M(\mathcal B)\}\le C_0^2/h_1=:h_2.\)
\end{cor}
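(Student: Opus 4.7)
The plan is to reduce Corollary~\ref{cor:smallballs} to a product-of-horoheights dichotomy for horoballs at distinct cusps, and then apply the Thick--Thin bound (Theorem~\ref{thm:thick-thin}).

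First, I would establish the following claim: for any $A,B>0$, the horoballs $\mathcal{B}_\infty(A)$ and $\mathcal{B}_{M(\infty)}(B)$ intersect if and only if $AB\le c(M)$ for a positive constant $c(M)$ depending only on $M$. Let $\gamma$ be the geodesic in $\Hyp$ joining $M(\infty)$ to $\infty$. Since horoballs are geodesically convex (as noted already in the excerpt) with boundary horospheres that meet every geodesic to the associated cusp perpendicularly, two disjoint horoballs based at distinct cusps are automatically disjoint ``along $\gamma$'', in the sense that their intersections with $\gamma$ are disjoint rays; equivalently, the minimising pair for $d_\Hyp$ between the two horospheres lies on $\gamma$. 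Parametrise $\gamma$ by arc length with $\gamma(t)\to\infty$ as $t\to+\infty$. The standard theory of Busemann functions in negatively curved space gives $\log\height_\infty(\gamma(t))=t+\alpha$ and $\log\height_{M(\infty)}(\gamma(t))=-t+\beta$ for some constants $\alpha,\beta$, so the product
\[
\height_\infty(\gamma(t))\cdot \height_{M(\infty)}(\gamma(t))=e^{\alpha+\beta}=:c(M)
\]
is constant on $\gamma$. The rays $\mathcal{B}_\infty(A)\cap\gamma=\{\gamma(t):t\ge \log A-\alpha\}$ and $\mathcal{B}_{M(\infty)}(B)\cap\gamma=\{\gamma(t):t\le\beta-\log B\}$ overlap precisely when $AB\le c(M)$.

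Applying Theorem~\ref{thm:thick-thin} with $A=B=C_0$ shows $\mathcal{B}_\infty(C_0)\cap\mathcal{B}_{M(\infty)}(C_0)=\emptyset$, which by the dichotomy forces $c(M)<C_0^2$. For any $h\in M(\mathcal B)=\mathcal{B}_{M(\infty)}(h_1)$, setting $H:=\height_\infty(h)$ gives $h\in \mathcal{B}_\infty(H)\cap\mathcal{B}_{M(\infty)}(h_1)\ne\emptyset$, so $Hh_1\le c(M)<C_0^2$, i.e. $H<C_0^2/h_1$. Taking the supremum over $h$ yields $\height_\infty(M(\mathcal B))\le C_0^2/h_1=h_2$, as desired.

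The main obstacle is verifying the product-of-horoheights identity along $\gamma$: one must check that the paper's definition $\height_{M(\infty)}(h)=\height_\infty(M^{-1}h)$ is compatible with the Busemann-function picture, so that $\log\height_{M(\infty)}\circ\gamma$ is indeed linear of slope $-1$. In the upper half-space model of $\Hyp$ this is immediate by direct computation (the dilation $\delta_r$ scales $\height_\infty$ by $r^2$ and translates the vertical geodesic by hyperbolic distance $\log r$), and for geodesics between arbitrary cusps it reduces to this case by applying an isometry that sends the geodesic to the vertical one. Once this linearity is established, the remainder of the argument is a short application of the Thick--Thin theorem.
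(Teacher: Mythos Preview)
Your argument is correct and takes a genuinely different route from the paper. The paper establishes the scaling law $\height_\infty(M(\mathcal B_\infty(h)))=C_M h^{-1}$ by writing $M$ as a word $\iota a_n\cdots a_1\iota$ in the generators of $\Mod$ and tracking explicitly, via formulas from \cite{1510.06033}, how each inversion and translation transforms the horoheight of a horoball; this produces the concrete expression $C_M=\prod_i\norm{x_i}^{-2}$, after which the Thick--Thin bound gives $C_M<C_0^2$ exactly as you conclude with your $c(M)$. Your approach replaces this generator-by-generator computation with the intrinsic geometry of horoballs: linearity of $\log\height$ along the connecting geodesic forces the product of the two horoheights to be constant there, and convexity of horoballs (equivalently, that the sum of the two Busemann functions is convex and hence minimized on $\gamma$) gives the intersection dichotomy. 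Your proof is cleaner and model-independent---it would go through verbatim in any pinched negatively curved manifold---while the paper's version yields an explicit formula for the constant and ties in with the CF-digit machinery used elsewhere.

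Two small remarks. First, the slope you write as $\pm1$ is not literally $1$ in the paper's normalization (the dilation identity you quote already gives slope $2$, and for $\Hyp^2_\R$ with $\height_\infty=w=\sqrt y$ one gets $1/2$); but since your overlap computation only uses that the two slopes are negatives of each other, nothing in the argument changes. Second, your ``equivalently, the minimising pair lies on $\gamma$'' is exactly the nontrivial half of the dichotomy (intersect $\Rightarrow$ intersect along $\gamma$); it is standard---the nearest-point projection to a horoball is along the ray to its basepoint, forcing a minimising pair onto the geodesic joining the two basepoints---but it is worth saying explicitly rather than folding into the convexity remark.
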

\begin{proof}We first show that for each $M\in \Mod$ there exists a $C_M>0$ such that $\height_\infty(M (\mathcal B_\infty(h)))=C_Mh^{-1}$ for each $h>0$.  To verify this, we use the fact that $\Mod=\langle \Zee, \iota\rangle$ to expand $M=\iota  a_n \cdots a_1 \iota$ for $a_i\in \Zee$, noting that initial and final translations don't affect horoheight. On the other hand, each inversion acts, by Lemmas 3.6 and 3.8 of \cite{1510.06033}, via:
\(\height_\infty(\iota (\mathcal B_\infty(h)))=1/h, \hspace{.75in} \height_\infty(\iota (\mathcal B_x(h))) = h \norm{x}^{-2}.\)
Thus, as long as, for each $i$, $x_i:=(a_i \iota  \cdots  a_1  \iota )(\infty) \neq 0$, we have 
\(\height_\infty(M(\mathcal B_\infty(h)) = h_1^{-1} \prod_{i=1}^{n} \norm{x_i}^{-2}.\)
If at some point $x_i=0$, then we must have $(\iota a_i\iota  \cdots a_1 \iota)(\mathcal B_\infty(h))=\mathcal B_\infty(h)$, so that digits $a_1, \ldots, a_i$ may be removed without altering the effect of $M$ on $\mathcal B_\infty(h)$. With the reduction implemented, the product $C_M:=\prod_{i=1}^{n} \norm{x_i}^{-2}$ is well-defined and has the desired property.

To complete the argument, note that from Theorem \ref{thm:thick-thin} we have that $h^{-1}C_M<h$ for $h=C_0$, so $C_M<C_0^2$ and $\height_\infty(M(\mathcal B))<h_2$, as desired.
\end{proof}

Recall that we have an embedding $\phi:\X\to k^{n+2}$ given by $\phi(z,t)=(1,\sqrt{2}z,\Norm{z}^2+t)$; with a corresponding embedding of $\Mod$ into $U(J)\subset GL(n+2,k)$ acting on these vectors. Isometries of $\X$ then embed as lower block triangular mappings of the form
\(
\begin{bmatrix}
    a & 0_n & 0\\
    b & A & 0_n\\
    c & b^\dagger & \overline a
\end{bmatrix},
\)
where $\norm{a}=1$ and $A$ is a unitary transformation. The matrix associated to the inversion is given by Lemma \ref{lemma:inversionform}.

Now, given a point $x\in K$ with at least $m$ continued fraction  digits (note that \cite{LV} uses the variable $n$ instead), let $q_m$ be the denominator of $M_m(0)$; that is, the first coordinate of the vector $M_m\phi(0)$. Thus in the matrix representation of $M_m$, the top-left entry is $q_m$ and the top-right entry, in norm, is $\Norm{q_{m-1}}$, matching the matrix representation in Lemma 3.16 of \cite{LV}.

\begin{lemma}
\label{lemma:discreteness}
Under the assumptions of Theorem \ref{thm:convergence}, there exists $C>0$ such that $q_m\neq 0$ implies $\Norm{q_m}>C$.
\begin{proof}
By Theorem \ref{thm:thick-thin}, there exists a horoball $\mathcal B$ based at $\infty$ of some horoheight $C_1$ such that the $\Mod$-orbit of $\mathcal B$ consists of disjoint horoballs. Moreover, the proof of Lemma 3.9 of \cite{1510.06033} (again, readily extended to the current setting) gives a constant $s_0$ such that if $q_m \neq 0$ then \(\height_\infty(M_m(\mathcal B)):=\sup\{\height_\infty(h)\st h\in M_m(\mathcal B)\}\geq s_0\Norm{q_m}^{-1}.\)
The disjointness requirement forces $\height_\infty(M_m(\mathcal B))<C_1$, so $\Norm{q_m}>s_0/C_1=:C$. 
\end{proof}
\end{lemma}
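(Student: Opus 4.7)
My plan is to extract a lower bound on $\Norm{q_m}$ by converting it into an upper bound on the horoheight of a translate of a fixed horoball based at $\infty$, then closing off the argument via the disjointness result furnished by Theorem \ref{thm:thick-thin}.

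First, I would appeal to Theorem \ref{thm:thick-thin} to fix a constant $C_1 \geq C_0$ and consider the horoball $\mathcal B = \mathcal B_\infty(C_1)$ based at $\infty$. By Theorem \ref{thm:thick-thin}, any two $\Mod$-translates $M_1 \mathcal B$ and $M_2 \mathcal B$ with $M_1\infty \neq M_2\infty$ are disjoint; equivalently, if $M\infty \neq \infty$, then $M\mathcal B$ is disjoint from $\mathcal B$ itself, so $M\mathcal B$ must lie below horoheight $C_1$. This is the rigidity ingredient supplied by discreteness of $\Mod$.

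Next, I would connect the size of $q_m$ to how far $M_m \mathcal B$ protrudes toward the cusp at $\infty$. The matrix representation recorded just before the statement identifies $q_m$ as the top-left entry of $M_m$ in the embedding into $U(J)$. The proof of Lemma 3.9 of \cite{1510.06033}, whose argument transfers essentially verbatim to the present Iwasawa setting (the horoheight at $\infty$ of the image of a horoball at $\infty$ under a Möbius-type isometry scales like the reciprocal of the squared norm of the bottom-left denominator coefficient), gives a constant $s_0 > 0$, depending only on the geometry, such that whenever $M_m\infty \neq \infty$ one has
\[
\height_\infty(M_m \mathcal B) \;\geq\; s_0 \Norm{q_m}^{-1}.
\]
Since $q_m \neq 0$ is exactly the condition $M_m\infty \neq \infty$, combining this estimate with the upper bound $\height_\infty(M_m \mathcal B) < C_1$ coming from disjointness yields $s_0 \Norm{q_m}^{-1} < C_1$, so $\Norm{q_m} > s_0/C_1 =: C$, as required.

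The main obstacle I anticipate is not in the argument's outline but in verifying that the quoted estimate from \cite{1510.06033}, originally formulated for the first Heisenberg group, is genuinely available in full Iwasawa generality. One must check that the ingredients used there, namely the action of the inversion $\iota$ on horoball heights and the factorization of an arbitrary $M_m$ as a word in $\Zee$ and $\iota$, behave the same way here; but both facts have already been put in place by Corollary \ref{cor:smallballs} and its proof, which explicitly track the horoheight through such a factorization. In particular, the product formula $\height_\infty(M_m \mathcal B_\infty(h)) = h^{-1}\prod_i \Norm{x_i}^{-2}$ derived in the proof of Corollary \ref{cor:smallballs} identifies the same geometric quantity that appears on the right-hand side of Lemma 3.9, and so the constant $s_0$ may be read off from this product once $q_m$ is identified with the relevant denominator. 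Modulo this bookkeeping, the lemma follows immediately.
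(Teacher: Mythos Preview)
Your proposal is correct and follows essentially the same route as the paper: invoke Theorem~\ref{thm:thick-thin} for a disjoint family of horoballs, quote the horoheight estimate from Lemma~3.9 of \cite{1510.06033} to bound $\height_\infty(M_m\mathcal B)$ below by $s_0\Norm{q_m}^{-1}$, and combine with the disjointness upper bound to get $\Norm{q_m}>s_0/C_1$. Your added discussion about verifying the cited estimate in the general Iwasawa setting via the factorization in Corollary~\ref{cor:smallballs} is a useful elaboration; the only minor slip is the parenthetical reference to the ``bottom-left'' coefficient, since the paper records $q_m$ as the top-left entry of $M_m$ in its matrix convention.
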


From here, it remains to show that $q_m\neq 0$. This is just the content of Lemma 3.20 of \cite{LV} and we can extend the argument to the general case by citing Lemma \ref{lemma:discreteness} above in place of the fact that non-zero Gaussian integers have norm at least 1.

\section{Markable Geodesics}\label{sec:markable}
We now study the way a geodesic $\gamma$ interacts with the modular group $\Mod$ related to a proper, discrete, and complete Iwasawa continued fraction algorithm, with the goal of proving the Markable Geodesic Theorem \ref{thm:markable} below.  We will track the passage of a geodesic through $\Mod\backslash\Hyp$ by detecting intersections with the unit sphere
\(\Sph=\{ h \in \Hyp \st \norm{h}=1\}\)
and its images under elements of $\Mod$. We will obtain an analog of geodesic coding for certain \emph{markable} geodesics, and then show that markability is a generic condition. Note that $\partial \Sph$ is the unit sphere in $\X$, and that $\iota(\Sph)=\Sph$.

\begin{thm}[Markable Geodesic Theorem]\label{thm:markable}
Fix a complete, proper, and discrete Iwasawa CF algorithm on an Iwasawa inversion space $\X$, with the associated hyperbolic space $\Hyp$, modular group $\Mod$, and fundamental domain $K\subset\X$ for the lattice $\Zee=\Stab_\Mod(\infty)$. 

There exists a codimension-one set $\CW \subset T^1 \Hyp$ and a \emph{marking} that assigns to every markable geodesic satisfying $\gamma(0)\in \CW$
\begin{itemize}
    \item digits $a_i\in \Zee$ and mappings $M_i\in \Mod$, for each $i\in \Z$, 
    \item increasing  indices $i_j\in \Z$ and times $t_j$, for each $j\in \Z$, with $i_0=0, t_0=0$
\end{itemize}
collectively called the marking of the geodesic $\gamma$ such that:
\begin{enumerate}
    \item (Full Coverage) The segments $[t_{j-1}, t_j]$ have length uniformly bounded below and hence cover all of $\R$,
    \item (Relation to Shift Map) For each $i\geq 1$,  $a_i$ is the $i^{th}$ CF digit of $\gamma_+$, and  $M_i$ is the branch of $T^{-i}$ associated to the shift map $T$ at $\gamma_+$,
    \item (Cusp Detection)
    If, for $t\in [t_{j-1}, t_j]$, the horoheight of $\gamma(t)$ from $M\infty$ satisfies $\height_{M\infty} \gamma(t)>h_0$, and  if $M^{-1} \gamma_+\in K$ for some $M\in \Mod$, then $M=M_{i_j}$,
    \item (Intersection Detection) Let $M\in \Mod$ and  $t\in \R$. Then one has $\gamma(t)\in M\CW$ if and only if for some $j$ one has $t=t_j$ and $M=M_{i_j}$,
    \item (Shifted Gauss Equivariance) Let $k\in \Z$. The marking $\{a_i',M'_i,i'_j, t'_j\}$ associated to the markable geodesic $\gamma'(t):=M_{i_k}^{-1}\gamma(t+t_k)$ satisfies: $t'_j=t_{j+k}-t_k$, $i'_j=i_{j+k}-i_k$, $a'_i=a_{i+i_k}$, and $M'_i=M_{i_k}^{-1}M_{i+i_k}$.
\end{enumerate}
\end{thm}

To begin with, in $\Hyp_\R^2$, it is apparent from the geometry that any geodesic can only intersect $\Sph$ transversely at a single point; however, in other hyperbolic spaces, even a generic geodesic may intersect $\Sph$ at more than one point; indeed when $k\neq \R$, $\Hyp$ does not admit \emph{any} geodesically convex codimension-1 hypersurfaces. However, a generic geodesic intersects $\Sph$ in finitely many points, so we may speak of the \emph{last} intersection with $\Sph$:

\begin{lemma}\label{lemma:finiteintersection}
Let $\gamma$ be a geodesic in $\Hyp$ not contained in $\Sph$. Then the set of intersections $\gamma\cap \Sph$ is finite. Furthermore, if there are times $t_1, t_2$ such that $\norm{\gamma(t_1)}>1$ and $\norm{\gamma(t_2)}<1$, then $\gamma$ does intersect $\Sph$.
\begin{proof}
The existence of the intersection follows from the definition of $\Sph$ by $\norm{\cdot}=1$.

Finiteness follows by an algebraic argument. Because $\Isom(\Hyp)$ acts transitively on geodesics, we may write $\gamma=g(\gamma_2)$, where $g\in G$ and $\gamma_2$ is the geodesic joining $0$ and $\infty$. Because $g$ and acts by projective transformations on $\Hyp$, the condition $\norm{g(\gamma_2(t))}=1$ induces an algebraic condition on $t$. Thus, if the condition were to be satisfied for infinitely many $t$, it must be satisfied for all $t$, so that $\gamma\subset \Sph$, a contradiction.
\end{proof}
\end{lemma}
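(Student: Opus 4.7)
Existence is immediate by the intermediate value theorem applied to the continuous function $t\mapsto \norm{\gamma(t)}$, which takes values on either side of $1$ at $t_1$ and $t_2$. For finiteness, the strategy is to transport $\gamma$ to a concrete model geodesic via the transitive action of $G=\mathbb P U(J)$ and translate the condition $\norm{\cdot}=1$ into a polynomial equation in the parameter, so that the fundamental theorem of algebra forces either finiteness or a full containment $\gamma\subset\Sph$.

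Let $\gamma_0$ denote the vertical geodesic with image $\{(0,s):s>0\}\subset\Hyp$, parameterized so that $\phi(\gamma_0(s))=(1,0,s)$. Since $G$ acts transitively on unparameterized geodesics of $\Hyp$, write $\gamma=g(\gamma_0)$ for some $g\in G$, realized as a matrix in $U(J)$ with columns $c_1,\ldots,c_{n+2}$. Then $g\phi(\gamma_0(s))=c_1+c_{n+2}\,s$; let $\alpha_1,\beta_1$ and $\alpha_2,\beta_2\in k$ denote the first and last entries of $c_1$ and $c_{n+2}$, respectively. After dehomogenizing the first coordinate (by right-multiplication by $(\alpha_1+\alpha_2 s)^{-1}$), Lemma \ref{lemma:thirdcoordinate} gives
\begin{equation*}
\norm{g\gamma_0(s)}^2=\Norm{(\beta_1+\beta_2 s)(\alpha_1+\alpha_2 s)^{-1}}=\frac{\Norm{\beta_1+\beta_2 s}}{\Norm{\alpha_1+\alpha_2 s}},
\end{equation*}
so $\norm{g\gamma_0(s)}=1$ becomes
\begin{equation*}
P(s):=\Norm{\beta_1+\beta_2 s}^2-\Norm{\alpha_1+\alpha_2 s}^2=0.
\end{equation*}
Expanding $\Norm{x+ys}^2=x\bar x+s(x\bar y+y\bar x)+s^2\,y\bar y$ for $x,y\in k$ and $s\in\R$, we see that $P$ is a real polynomial in $s$ of degree at most two. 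If $P\not\equiv 0$ it has at most two zeros, giving at most two points of $\gamma\cap\Sph$; if $P\equiv 0$ then $\norm{g\gamma_0(s)}=1$ on the dense set where $\alpha_1+\alpha_2 s\neq 0$, hence by continuity everywhere, so $\gamma\subset\Sph$, contradicting the hypothesis.

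The main subtlety I anticipate is confirming, for non-commutative $k=\mathcal H$, that $\Norm{x+ys}^2$ genuinely expands into a real polynomial in the scalar $s\in\R$. This rests on the facts that $z\mapsto z\bar z$ is a real-valued quadratic form on $k$ viewed as an $\R$-vector space and that real scalars are central, so the cross term reduces to $2\Re(x\bar y)\,s$ regardless of the ordering of factors. Once that bookkeeping is in hand, the remainder of the argument is essentially the fundamental theorem of algebra.
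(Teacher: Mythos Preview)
Your proof is correct and follows essentially the same route as the paper: reduce to the standard vertical geodesic via transitivity of $G$, then observe that $\norm{g\gamma_0(s)}=1$ is an algebraic condition in the parameter, forcing either finitely many solutions or $\gamma\subset\Sph$. Your version is more explicit---you actually compute the condition via Lemma~\ref{lemma:thirdcoordinate} and obtain a real quadratic $P(s)$, yielding the sharper bound $\#(\gamma\cap\Sph)\le 2$---whereas the paper leaves ``algebraic condition'' unexpanded. One quibble: you invoke the fundamental theorem of algebra, but all you need is that a nonzero real polynomial has finitely many real zeros.
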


We now establish the necessary results for the proof of the Markable Geodesic Theorem.

\subsection{Decomposing an Arbitrary Geodesic}\label{subsec:geodesic}
In the first stage of the proof, we will break up a geodesic $\gamma$ into segments punctuated by intersections with expected images of the sphere $\Sph$, in a way that gives us control of the intermediate horoheights. For a more formal statement, see Lemma \ref{lemma:upshot} below.

We start by restricting our attention to geodesics that intersect near the top of $\Sph$. Fix $\epsilon>0$ such that $\epsilon+1< \rad(K)^{-1}$ (this choice comes into play in Lemma \ref{lemma:exile}). We then have:
\begin{lemma}
\label{lemma:horocap}
Suppose $\gamma$ is a geodesic ray with $\norm{\gamma(0)}\ge 1+\epsilon$ and $\gamma_+\in K$. Then the horoheight of any intersection of $\gamma$ with $\Sph$ satisfies $\height_{\infty}(\gamma(t))\ge h_2$ for some $h_2\in(0,1)$ depending only on $\epsilon$.
\begin{proof}
The existence of the intersection follows from Lemma \ref{lemma:finiteintersection}.

To obtain the lower bound on the horoheight of each intersection, note that $\gamma$ is uniformly transverse to boundary $\X$ (note that we are not working in a conformal model, so $\gamma$ is not necessarily perpendicular to $\X$), as this is true for the vertical geodesic joining $0$ and $\infty$ and the endpoints of $\gamma$ are contained in the compact set $\overline{K}\times (\overline \Hyp \setminus B(0,1+\epsilon))$. Thus, there is a minimal horoheight $h_2$ (that we may assume is in $(0,1)$) that $\gamma$ must reach as it moves away from $\gamma_-$ and $\gamma_+$ before an intersection can occur. The same bound must hold for the intermediate segment by the convexity of horoballs.
\end{proof}
\end{lemma}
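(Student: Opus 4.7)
The plan is a compactness and contradiction argument. First, I would note that the hypotheses place $\gamma_+$ in the compact set $\overline{K}$ (with $\norm{\gamma_+}\le \rad(K)<1$) and the initial point $\gamma(0)$ in the compact set $\widehat{\Hyp}\setminus B(0,1+\epsilon)$ (with $\norm{\gamma(0)}\ge 1+\epsilon>1$). Existence of the intersection then follows from Lemma \ref{lemma:finiteintersection} applied at $t_1=0$ and at a sufficiently large $t_2$ where $\norm{\gamma(t_2)}$ is close to $\norm{\gamma_+}<1$; finiteness of $\gamma\cap \Sph$ also comes from that lemma.

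For the uniform lower bound on horoheight, I would argue by contradiction. If no $h_2>0$ worked, there would be a sequence of rays $\gamma_n$ satisfying the hypotheses and intersections $q_n=\gamma_n(t_n)\in \gamma_n\cap \Sph$ with $\height_\infty(q_n)\to 0$. Passing to subsequences, I extract limits $q_n\to q_\infty\in\overline{\Sph}$, $\gamma_n(0)\to p_0\in\widehat{\Hyp}$ with $\norm{p_0}\ge 1+\epsilon$, and $\gamma_{n,+}\to q_+\in\overline{K}$ with $\norm{q_+}\le \rad(K)<1$. Since $\height_\infty(q_n)\to 0$ and $q_n\in\Sph$, one has $q_\infty\in\X$ with $\norm{q_\infty}=1$. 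In particular $q_\infty\notin\{p_0,q_+\}$ and $p_0\ne q_+$, so the geodesic $\gamma_\infty$ from $p_0$ to $q_+$ is well-defined.

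The decisive step will be continuous dependence of geodesics on their endpoints: $\gamma_n$ converges to $\gamma_\infty$ in Hausdorff distance on compact subsets of $\overline{\Hyp}$. Combined with $q_n\to q_\infty$ and $q_n\in\gamma_n$, this forces $q_\infty\in\overline{\gamma_\infty}\cap\X\subset\{p_0,q_+\}$, contradicting the norm comparison above.

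The hard part will be the degenerate limits in which $p_0$ lies in $\widehat{\X}$ rather than in $\Hyp$, so that the ``limit ray'' becomes a full geodesic line (or a vertical ray when $p_0=\infty$): one must check that $\overline{\gamma_\infty}\cap \X$ is still contained in $\{p_0,q_+\}$ and that Hausdorff convergence on compact subsets of $\overline{\Hyp}$ is still available. Once those checks are in place, the fact that $p_0$ and $q_+$ have norms bounded away from $1$ on opposite sides (by $\epsilon$ and by $1-\rad(K)$ respectively) closes out the contradiction.
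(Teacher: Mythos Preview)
Your proposal is correct and is essentially the same compactness argument as the paper's, made explicit via a contradiction and subsequence extraction. The paper packages the same idea more tersely as ``uniform transversality over the compact parameter set $\overline{K}\times(\widehat\Hyp\setminus B(0,1+\epsilon))$'' followed by horoball convexity, whereas you spell out the limiting picture and use that $\overline{\gamma_\infty}\cap\X\subset\{p_0,q_+\}$ together with the norm gap to reach a contradiction; both rely on exactly the same facts (compactness of the endpoint data, continuous dependence of geodesics on their defining points, and that a geodesic only meets $\X$ at its endpoints).
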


We denote the subset of $\Sph$ having horoheight at least $h_2$ as $\W$, and refer to both $\W$ and its images under $\Mod$ as ``walls''.

We next fix a geodesic ray $\gamma$ originating in $\W$ and terminating in $K$ and let $M_i\in\Mod$ be the mappings associated to the CF expansion of $\gamma_+$. We now look for intersections of $\gamma$ with walls $M_i( \W)$ by iterating the shift map on $\gamma$ and identifying intersections of $M_i^{-1}(\gamma)$ with $\W$. This happens within finitely many iterations, with control over the intermediate digits:

\begin{lemma}
\label{lemma:exile}
There is a finite collection $\Mod_0\subset \Mod$ such that the following holds. Suppose $\gamma$ is a geodesic with $\gamma(0)\in \W$ satisfying $\gamma_+\in K\setminus \Mod \infty$. Then there exists a time $0<\tee_1<\infty$ and a universally bounded ${\ai_1}>0$ such that $M_{\ai_1}^{-1}(\gamma(\tee_1))\in \W$ and $M_{\ai_1-1}\in \Mod_0$.
\end{lemma}

At this point, for notational convenience, we will often drop parentheses when elements of $\Mod$ act on points or sets of points. 

\begin{proof}
We note first that since $\gamma_+\not\in \Mod\infty$, then the continued fraction expansion of $\gamma_+$ does not terminate and so $M_i^{-1} \gamma_+$ is well-defined and in $K$ for all $i\in \mathbb{N}$.

If $\norm{M_1\gamma(0)}\geq 1+\epsilon$, the result is immediate by Lemma \ref{lemma:horocap}.

If not, we proceed iteratively on $i$, starting at $i=1$, supposing at every stage that $\norm{M_{i-1}^{-1}\gamma(0)} < 1+\epsilon$ until we find the minimum positive $\ai_1$ for ${i}$ for which \[\norm{M_i^{-1}\gamma(0)}\ge 1+\epsilon.\label{eq:ibound}\] Note that $M_i^{-1}=a_i^{-1}\iota M_{i-1}^{-1}$, $M_0=\id$, and moreover that $a_i^{-1}$ is an isometry of the metric $d$.

When $i=1$, we have by the above observation and our definition of inversions that
\[
d(M_1^{-1}\gamma_+,M_1^{-1}\gamma(0))&= d(\iota M_{0}^{-1}\gamma_+,\iota M_{0}^{-1} \gamma(0))= \frac{d( M_{0}^{-1}\gamma_+, M_{0}^{-1} \gamma(0))}{\norm{M_{0}^{-1}\gamma_+}\norm{M_0^{-1}\gamma(0)}}\\
&\ge \frac{d( M_{0}^{-1}\gamma_+, M_{0}^{-1} \gamma(0))}{\rad(K)(1+\epsilon)}=\frac{d(\gamma_+,  \gamma(0))}{\rad(K)(1+\epsilon)}.\label{eq:radKdenom1}
\]
In particular, since $d(\gamma_+,\gamma(0))\ge d(K,\W)$ this implies that
\[
\norm{M_1^{-1}\gamma(0)}&\ge d(M_1^{-1}\gamma_+,M_1^{-1}\gamma(0))-\norm{M_1^{-1} \gamma_+}\\
&\geq \frac{d(K,\W)}{\rad(K)(1+\epsilon)}-\rad(K)\label{eq:radKdenom2}
\]
This lower inequality could be substantially improved if more was known about $M_0^{-1}\gamma_+$. In particular, if $\norm{M_0^{-1}\gamma_+}\le r$ for
\(
r=\frac{d(K,\W)}{(1+\epsilon)(1+\epsilon+\rad(K))},
\)
then we could replace the $\rad(K)$ in the denominator of \eqref{eq:radKdenom1} and \eqref{eq:radKdenom2} with $r$ and obtain that $\norm{M_1^{-1}\gamma(0)}\ge 1+\epsilon$, so that $i=1$ itself is the minimum index for which \eqref{eq:ibound} holds.

Now we begin the iteration. At every stage we see that
\(
d(M_i^{-1}\gamma_+,M_i^{-1}\gamma(0)) &\ge \frac{d(M_{i-1}^{-1}\gamma_+,M_{i-1}^{-1}\gamma(0))}{\norm{M_{i-1}^{-1}\gamma_+}\norm{M_{i-1}^{-1}\gamma(0)}}\\ &\ge \frac{d(M_{i-2}^{-1}\gamma_+,M_{i-2}^{-1}\gamma(0))}{\norm{M_{i-1}^{-1}\gamma_+}\norm{M_{i-1}^{-1}\gamma(0)}\norm{M_{i-2}^{-1}\gamma_+}\norm{M_{i-2}^{-1}\gamma(0)}}\\
&\dots\\
&\ge \frac{d(\gamma_+,\gamma(0))}{\prod_{j=0}^{i-1}\norm{M_{j}^{-1}\gamma_+}\norm{M_{j}^{-1}\gamma(0)}},
\)
and thus 
\[
\norm{M_i^{-1}\gamma(0)}\ge \frac{d(K,\W)}{(\rad(K)(1+\epsilon))^i}-\rad(K),\label{eq:iterativestep}
\]
noting again that if $\norm{M_{i-1}^{-1}\gamma_+}\le r$, then one copy of $\rad(K)$ in the denominator of the last inequality can be replaced with $r$. Thus $i$ satisfies \eqref{eq:ibound}.

Regardless of whether $\norm{M_{i-1}^{-1}\gamma_+}\le r$ at any stage, since $\rad(K)(1+\epsilon)<1$ by the initial choice of $\epsilon$, within a bounded number of steps independent of our choice of $\gamma$, the expression on the right of \eqref{eq:iterativestep} exceeds $1+\epsilon$. Thus, there must be a uniform bound on $\ai_1$ such that $\norm{M_{\ai_1}^{-1}\gamma(0)}>1+\epsilon.$

Moreover, we see that if ever in our iterative process, $\norm{M_{i-1}^{-1}\gamma_+}\le r$, then this $i$ must be the desired value $\ai_1$. Thus for $i=\ai_1$, we must have that $\norm{M_j^{-1}\gamma_+}>r$, $0\le j <i-1$. However, recall that $a_{j+1}=[{\iota M_{j}\gamma_+}] $. In particular, this tells us that $a_{j+1}$ must belong to a finite set of values for $0\le j <i-1$, and since $M_{i-1}=\iota^{-1} a_1 \iota^{-1} a_2\dots \iota^{-1} a_{i-1}$, there are finitely many options for what it could be.
\end{proof}

\begin{cor}
\label{cor:h1}
There exists a universal $h_1>0$ such that under the assumptions of the preceding lemma we have $\height_{\infty}(M_{\ai_1}^{-1}\gamma(t))>h_1$ for all $0\leq t \leq \tee_1$.
\begin{proof}
We already know that $\height_\infty(M_{\ai_1}^{-1}\gamma(\tee_1))\geq h_2$ since this point is contained in $\W$.

Let us next consider the possible horoheights of $M_{\ai_1}^{-1}\gamma(0) = a_{\ai_1}^{-1} \iota M_{\ai_1-1}^{-1}\gamma(0)$. The point $\iota M_{{\ai_1}-1}^{-1}\gamma(0)$ lies  in the relatively compact set $\cup\{\iota M^{-1}\W \st M\in \Mod_0\}$, so for some $h_3$ we obtain $\height_\infty (\iota M_{{\ai_1}-1}^{-1}\gamma(0))>h_3$. Since translation along $\X$ does not affect horoheight, we likewise have $\height_\infty(M_{\ai_1}^{-1}\gamma(0))>h_3$.

The lemma now follows with $h_1=\min(h_2,h_3)$ by convexity of horoballs.  
\end{proof}
\end{cor}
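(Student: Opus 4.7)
The plan is to prove the horoheight lower bound at the two endpoints $t=0$ and $t=\tee_1$ of the time interval separately, and then invoke geodesic convexity of horoballs at $\infty$ to propagate the bound to all intermediate times.

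The right endpoint $t=\tee_1$ is essentially free. Lemma \ref{lemma:exhile} places $M_{\ai_1}^{-1}\gamma(\tee_1)$ in $\W$, and by construction (Lemma \ref{lemma:horocap}) every point of $\W$ has horoheight $\height_\infty \geq h_2$.

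For the left endpoint $t=0$, I would decompose $M_{\ai_1}^{-1} = a_{\ai_1}^{-1}\circ\iota\circ M_{\ai_1-1}^{-1}$ and exploit the fact that Lemma \ref{lemma:exhile} confines $M_{\ai_1-1}$ to the fixed finite set $\Mod_0$. Since $\W$ is a closed subset of $\Sph$ whose points have $\height_\infty \geq h_2 > 0$, it is bounded away from $\X$ and therefore compact in $\Hyp$. Consequently the union $\bigcup_{M\in\Mod_0}\iota M^{-1}\W$ is a compact subset of $\Hyp$, on which $\height_\infty$ attains a positive minimum $h_3$. Since $\gamma(0)\in\W$, the intermediate point $\iota M_{\ai_1-1}^{-1}\gamma(0)$ lies in this compact union. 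Left translation by $a_{\ai_1}^{-1}\in\Zee$ preserves $\height_\infty$, so $\height_\infty\bigl(M_{\ai_1}^{-1}\gamma(0)\bigr) \geq h_3$.

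Setting $h_1 := \min(h_2,h_3)$, both endpoints of the image segment $M_{\ai_1}^{-1}\gamma([0,\tee_1])$ lie in the horoball $\mathcal{B}_\infty(h_1) = \{h\in\Hyp \st \height_\infty(h)\geq h_1\}$. Horoballs are geodesically convex, so the entire segment remains inside $\mathcal{B}_\infty(h_1)$, which is precisely the claimed bound. The main point to check carefully is that the ingredients in the compactness step --- the lower bound $h_2$ from Lemma \ref{lemma:horocap} and the finite collection $\Mod_0$ from Lemma \ref{lemma:exhile} --- are truly uniform in the geodesic $\gamma$; but since both were established uniformly in those lemmas, the resulting $h_1$ depends only on the CF algorithm, not on $\gamma$.
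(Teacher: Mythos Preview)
Your proposal is correct and follows essentially the same argument as the paper: bound the horoheight at $t=\tee_1$ using $\W$, bound it at $t=0$ via the decomposition $M_{\ai_1}^{-1}=a_{\ai_1}^{-1}\iota M_{\ai_1-1}^{-1}$ and the (relative) compactness of $\bigcup_{M\in\Mod_0}\iota M^{-1}\W$, then use convexity of horoballs to interpolate. Your added remarks on compactness of $\W$ and uniformity in $\gamma$ are sound and only make the argument more explicit.
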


We are now able to characterize $M_{\ai_1}$ as the (essentially) unique element of $\Mod$ that can detect large horoheights along the geodesic segment between $\gamma(0)$ and $\gamma(\tee_1)$. Let us define an exceptional set $E\subset K$ by
\[\label{eq:Edefn}
E= K\cap \bigcup_{a\in \Zee\setminus \{\id\}} aK.
\]
Since $K$ is a fundamental domain for $\Zee$, $E$ has measure zero.

\begin{cor}\label{cor:h0}
There is an $h_0>1$ such that the following holds under the conditions of Lemma \ref{lemma:exile}, and for all $0\leq t\leq \tee_1$. If $M^{-1}\gamma_+\in K\setminus E$, $M_{\ai_1}^{-1} \gamma_+\in K\setminus E$, and $\height_{M\infty}(\gamma(t))>h_0$, then $M=M_{\ai_1}$.
\begin{proof}
The geodesic segment $M_{\ai_1}^{-1}\gamma([0,\tee_1])$ is contained in the horoball $\mathcal B=\mathcal B_\infty(h_1)$, and by Corollary \ref{cor:smallballs} there is an $h_0$ such that the points  of $M\mathcal B$ have horoheight based at $\infty$ of at most $h_0$ when $M\infty \neq \infty$. In particular, this applies to the geodesic segment.

Thus, if $\height_{M\infty}(\gamma(t))>h_0$ for any $0\leq t\leq \tee_1$, then we conclude that $M\infty=M_{\ai_1}\infty$ and thus that $M^{-1}M_{\ai_1}\in \Stab_\Mod(\infty)=\Zee$, by completeness. Moreover, $\gamma_+\in M(K\setminus E)\cap M_{\ai_1}(K\setminus E)$ so that $M(K\setminus E)\cap M_{\ai_1}(K\setminus E) \neq \emptyset$. Thus $(M^{-1}M_{\ai_1} (K\setminus E))\cap (K\setminus E) \neq \emptyset$. By the definition of $E$, the only element of $\Zee$ that takes any part of $K\setminus E$ back to itself is the identity element. Thus $M=M_{\ai_1}$ as desired. 

We may assume without loss of generality that $h_0>1$.
\end{proof}
\end{cor}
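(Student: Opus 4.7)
The plan is to combine Corollary \ref{cor:h1} with the horoball disjointness encoded in Corollary \ref{cor:smallballs}, then invoke completeness and the definition of $E$ to pin down $M$ exactly.

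First, I would translate Corollary \ref{cor:h1} into a statement about $\gamma$ itself: the bound $\height_\infty(M_{\ai_1}^{-1}\gamma(t)) > h_1$ for $t \in [0, \tee_1]$ is equivalent, by definition of horoheight based at a cusp, to saying $\gamma([0,\tee_1])$ is contained in the horoball $\mathcal{B}_{M_{\ai_1}\infty}(h_1)$. So the whole segment stays in one horoball based at $M_{\ai_1}\infty$.

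Second, I would show that if $M\infty \neq M_{\ai_1}\infty$ then $\gamma(t)$ cannot reach large $\height_{M\infty}$ on $[0,\tee_1]$. Assuming $M\infty \neq M_{\ai_1}\infty$, the element $M^{-1}M_{\ai_1} \in \Mod$ sends $\infty$ to a point distinct from $\infty$. Applying Corollary \ref{cor:smallballs} to $\mathcal{B} = \mathcal{B}_\infty(h_1)$ and this element gives
\[
\height_\infty\bigl((M^{-1}M_{\ai_1})(\mathcal{B}_\infty(h_1))\bigr) \leq C_0^2/h_1.
\]
Since $\gamma([0,\tee_1]) \subset M_{\ai_1}\mathcal{B}_\infty(h_1)$, we get $M^{-1}\gamma([0,\tee_1]) \subset (M^{-1}M_{\ai_1})\mathcal{B}_\infty(h_1)$, so $\height_{M\infty}(\gamma(t)) \leq C_0^2/h_1$ on the interval. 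Setting $h_0 := \max(1, C_0^2/h_1)$ makes the hypothesis $\height_{M\infty}(\gamma(t)) > h_0$ incompatible with $M\infty \neq M_{\ai_1}\infty$. Therefore $M\infty = M_{\ai_1}\infty$, equivalently $M^{-1}M_{\ai_1} \in \Stab_\Mod(\infty)$, which by completeness equals $\Zee$.

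Finally, I would use the $K \setminus E$ hypotheses to upgrade from $M^{-1}M_{\ai_1} \in \Zee$ to $M^{-1}M_{\ai_1} = \id$. Write $a := M^{-1}M_{\ai_1} \in \Zee$ and set $y := M_{\ai_1}^{-1}\gamma_+ \in K \setminus E$. Then $ay = M^{-1}\gamma_+ \in K \setminus E$ by assumption, so $y \in (K \setminus E) \cap a^{-1}(K \setminus E) \subset K \cap a^{-1}K$. If $a \neq \id$, then $a^{-1} \neq \id$ and so $K \cap a^{-1}K \subset E$ by the definition \eqref{eq:Edefn} of $E$, contradicting $y \in K \setminus E$. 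Hence $a = \id$, i.e.\ $M = M_{\ai_1}$.

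The argument is short once Corollary \ref{cor:smallballs} and the completeness assumption are in hand; the only real subtlety is the last step, where one must recognize that the exceptional set $E$ is tailored precisely so that a nontrivial element of $\Zee$ cannot map $K \setminus E$ back into itself. Without the $K \setminus E$ hypothesis on both $M^{-1}\gamma_+$ and $M_{\ai_1}^{-1}\gamma_+$, one would only be able to conclude $M$ and $M_{\ai_1}$ differ by an element of $\Zee$, not that they are equal.
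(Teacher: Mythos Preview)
Your proof is correct and follows essentially the same route as the paper's: use Corollary~\ref{cor:h1} to trap the segment in a horoball, invoke Corollary~\ref{cor:smallballs} to force $M\infty=M_{\ai_1}\infty$, then use completeness and the definition of $E$ to conclude $M=M_{\ai_1}$. The only cosmetic difference is that you argue the last step via a single point $y$ rather than via intersecting translates of $K\setminus E$; one trivial nit is that $h_0:=\max(1,C_0^2/h_1)$ may equal $1$, so to get the strict inequality $h_0>1$ you should take, e.g., $h_0$ to be any value strictly greater than $\max(1,C_0^2/h_1)$ or simply note (as the paper does) that $h_0$ can be enlarged without loss.
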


Iterating the above results gives us a sequence of indices $\ai_j$ and times $\tee_j$ with the following properties:
\begin{lemma}\label{lemma:upshot} Let $h_0$ be the constant in Corollary \ref{cor:h0} and $\gamma$ a geodesic ray with $\gamma(0)\in \W$, $\gamma(t)\not\in \W$ for $t>0$, and $\gamma_+\in K\setminus \Mod (\{\infty\}\cup E)$. Then there is an increasing sequence $\ai_j$, $j\geq 0$, of indices starting with $\ai_0=0$ and an increasing sequence of times $\tee_j$, $j\geq 0$, starting with $\tee_0=0$ such that:
\begin{enumerate}
    \item For each $j\geq 0$: $\gamma(\tee_j)\in M_{\ai_j}\W$, while for $t>\tee_j$, $\gamma(t)\not\in M_{\ai_j}\W$,
    \item For each $j\geq 1$: If $\tee_{j-1}\leq t\leq \tee_j$ and a matrix $M\in \Mod$ satisfies both $M^{-1}\gamma_+\in K$ and $\height_{\infty}{M^{-1}\gamma(t)}>h_0$, then $M=M_{\ai_j}$.
\end{enumerate}
\begin{proof}Given $\gamma$ satisfying the assumptions of the lemma, the $j=0$ case of Conclusion (1) is trivial.

Moreover, we obtain $\ai_1$ and $\tee_1$ from Lemma \ref{lemma:exile}. There might be several choices of $\tee_1$ due to multiple intersections with $M_{\ai_1}\W$ (see Lemma \ref{lemma:finiteintersection}); however, we let $\tee_1$ be the last of these.  We then know that $M_{\ai_1}^{-1}\gamma(\tee_1)\in \W$, which is equivalent Conclusion (1) for $j=1$. We then obtain Conclusion (2) for $j=1$ from Corollary \ref{cor:h0}.

We now proceed inductively: once $\tee_j$ and $\ai_j$ are defined, we replace $\gamma$ with the geodesic segment $\gamma'(t')= M_{\ai_j}^{-1}\gamma(t'+\tee_j)$ restricted to $t'\in [0,\infty)$. We then obtain $\tee'_1$, $\ai'_1$, and $M_{\ai'_1}$ as before, and take $\tee_{j+1}=\tee_j+\tee'_1$ and $\ai_{j+1}=\ai_j+\ai'_1$. The desired properties follow from the fact that the shift map acts as a shift on the digits of $\gamma_+$, via the identity $M_{\ai_{j+1}}=M_{\ai_j}M'_{\ai'_1}$.

Finally, we note that since $h_0>1$, if $\height_{\infty}{M^{-1}\gamma(t)}>h_0$, then $t$ cannot be any of the $\tee_j$'s, so there is no ambiguity in Conclusion (2).
\end{proof}
\end{lemma}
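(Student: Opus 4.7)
My plan is to prove Lemma \ref{lemma:upshot} by induction on $j$, with Lemma \ref{lemma:exhile} and Corollary \ref{cor:h0} supplying the geometric content at each stage and shift equivariance of the Gauss map driving the iteration. The base case $j=0$ is immediate: set $\ai_0=0$ and $\tee_0=0$, so $M_{\ai_0}=\id$, and the two standing hypotheses on $\gamma$ are precisely conclusion (1) at $j=0$; conclusion (2) is vacuous.

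For the inductive step, given $\ai_j$ and $\tee_j$, I would apply Lemma \ref{lemma:exhile} to the shifted geodesic $\gamma'(t'):=M_{\ai_j}^{-1}\gamma(t'+\tee_j)$. Its hypotheses are inherited: $\gamma'(0)\in\W$ from conclusion (1) at stage $j$; $\gamma'_+=M_{\ai_j}^{-1}\gamma_+=T^{\ai_j}(\gamma_+)\in K$ by the standard identification of the Gauss-map branches; and $\gamma'_+\notin\Mod(\{\infty\}\cup E)$ because this exceptional set is $\Mod$-invariant. Lemma \ref{lemma:exhile} then returns $\ai'_1$ and $\tee'_1$, and I would set $\ai_{j+1}:=\ai_j+\ai'_1$ and $\tee_{j+1}:=\tee_j+\tee'_1$. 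The recursive definition $M_{i+1}=M_i\iota^{-1}a_{i+1}$ of the convergents delivers the key shift identity $M_{\ai_{j+1}}=M_{\ai_j}M'_{\ai'_1}$, so $\gamma(\tee_{j+1})=M_{\ai_j}\gamma'(\tee'_1)\in M_{\ai_j}M'_{\ai'_1}\W=M_{\ai_{j+1}}\W$, as required.

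To obtain the ``last intersection'' clause of conclusion (1), I would invoke Lemma \ref{lemma:finiteintersection} to note that $\{t\ge\tee_j : \gamma(t)\in M_{\ai_{j+1}}\W\}$ is finite, and choose $\tee'_1$ (hence $\tee_{j+1}$) to be the maximum such time; this stays compatible with Lemma \ref{lemma:exhile}, which only requires $\gamma'(\tee'_1)\in\W$. Conclusion (2) on $[\tee_j,\tee_{j+1}]$ then follows by applying Corollary \ref{cor:h0} to $\gamma'$ on $[0,\tee'_1]$ and pulling the candidate matrix $M$ back to $M_{\ai_j}^{-1}M$. The boundary-ambiguity remark at $t=\tee_j$ is handled by observing that every point of $\Sph$ satisfies $\height_\infty\le 1$, whereas $h_0>1$, so the horoheight hypothesis of (2) cannot be met while $M^{-1}\gamma(t)$ sits on $\Sph$.

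The main obstacle I anticipate is not a new geometric argument but careful index bookkeeping: one must verify cleanly that the shift identity $M_{\ai_{j+1}}=M_{\ai_j}M'_{\ai'_1}$ interleaves the four indices $(\ai_j,\tee_j,\ai'_1,\tee'_1)$ correctly, and that $\Mod$-invariance of $\Mod(\{\infty\}\cup E)$ lets Lemma \ref{lemma:exhile} reapply at every stage. A secondary subtlety is that conclusion (2) must still hold when $\tee_{j+1}$ is chosen as the \emph{last} intersection rather than the first; this works out automatically because Corollary \ref{cor:h0} gives uniform horoheight control on the entire interval $[0,\tee'_1]$ regardless of which admissible $\tee'_1$ is selected.
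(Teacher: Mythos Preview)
Your proposal is correct and follows essentially the same approach as the paper: induction on $j$, with Lemma \ref{lemma:exhile} producing the next index/time pair for the renormalized geodesic $\gamma'(t')=M_{\ai_j}^{-1}\gamma(t'+\tee_j)$, Lemma \ref{lemma:finiteintersection} allowing one to take the \emph{last} intersection, Corollary \ref{cor:h0} yielding conclusion (2), and the shift identity $M_{\ai_{j+1}}=M_{\ai_j}M'_{\ai'_1}$ carrying the induction. Your treatment is slightly more explicit than the paper's in verifying that the hypotheses of Lemma \ref{lemma:exhile} and Corollary \ref{cor:h0} persist under renormalization (via $\Mod$-invariance of $\Mod(\{\infty\}\cup E)$), but there is no substantive difference in strategy.
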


\subsection{Decomposing a Markable Geodesic}\label{subsec:markable}

Lemma \ref{lemma:upshot} tells us how geodesic rays leaving the wall $\W$ towards $K$ return to other walls $M\W$, for various $M\in \Mod$. In particular, if a point on our ray has large horoheight with respect to $M\infty$, then the ray should cross the wall $M\W$. We now use this to define a set $\CW\subset T^1 \Hyp$ lying over $\W$, where this ``if'' condition becomes ``if and only if.'' We will then call a geodesic \emph{markable} if it intersects $\Mod$-translates of $\CW$ infinitely often in the past and future, and show in the Markable Geodesic Theorem (Theorem \ref{thm:markable}) that the behavior of a markable geodesic's cusp excursions is directly related to the continued fraction expansion of the forward endpoint. We will see in Corollary \ref{cor:markablegeneric} that markable geodesics are generic.

\begin{defi}
\label{defi:CW}
Using the constant $h_0>1$ provided by Lemma \ref{lemma:upshot}, we define $\CW \subset T^1\Hyp$ as follows: a vector based at a point in $\W$ is in the set $\CW$ if and only if the corresponding geodesic \emph{line} $\gamma$ satisfies:
\begin{enumerate}
\item $\gamma(0)\in \W$, while for $t>0$, $\gamma(t)\notin \W$,
\item $\gamma_+\in K\setminus \Mod(\{\infty\}\cup E)$, where $E$ is the exceptional set \eqref{eq:Edefn},
\item there exists a \emph{spotter time} $\widehat t<0$  such that $ \height_\infty(\gamma(\widehat t))>h_0$.
\end{enumerate}
\end{defi}

Critically, the third condition tells us that $\gamma$ intersects some $M\CW$ for $M\in \Mod$ at some time $t_M$ if and only if there is an associated spotter time  $\widehat{t}_M<t_M$ satisfying $\height_{\infty}M^{-1}\gamma(\widehat t_M)>h_0$, or equivalently $\height_{M\infty}\gamma(\widehat t_M)>h_0$.

\begin{defi}
\label{defi:markable}
A geodesic $\gamma$ is \emph{markable} if it intersects $\Mod$-translates of $\CW$ infinitely many times in both the past and the future. Unless stated otherwise, we will also assume that $\gamma(0)\in \CW$.
\end{defi}

In the following lemma, we will show that, for markable geodesics, spotter times follow a natural progression. That is, if we see a spotter time $\widehat t$ associated to an intersection time $t$, then we must move beyond $t$ before seeing the spotter time associated to any other intersection.

\begin{lemma}\label{lemma:spotterorder}
Let $\gamma$ be a markable geodesic, and $M, M'\in \Mod$. Suppose that $\gamma(a) \in M \CW$ and $\gamma(b)\in M'\CW$, attested by the corresponding spotter times $\widehat a, \widehat b$. Then these must alternate order: if $a<b$ then $\hat a< a< \hat b<b$.
\begin{proof}
We will prove an equivalent statement: if $\max(\widehat a, \widehat b)< \min(a, b)$, then $a=b$. Suppose it is false. Since $\gamma$ is markable, we may assume without loss of generality  that $\gamma(0)\in C_\W$, $0<\hat a<\hat b<\min(a,b)$. 

Let $\tee_{j}$ be the sequence in Lemma \ref{lemma:upshot}.  Then for some fixed $j$, we have $\tee_{j-1}<\hat a \leq \tee_j$. Conclusion 2 of the same lemma states that, since $\hat a$ is in the correct range and $\gamma(a)\in M\CW$, we have $M=M_{i_j}$ and by the definition of $\tee_j$  (that is, Conclusion 1 of the lemma) we have $a=\tee_j$. Furthermore, $\tee_{j-1}<\hat b<a=\tee_j$, so by the same argument $b=\tee_j$, as desired.
\end{proof}
\end{lemma}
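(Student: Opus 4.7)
The plan is to use Lemma \ref{lemma:upshot} to ``pin'' each $\CW$-translate intersection on $\gamma$ to a specific time $\tee_j$ in the sequence associated to a suitably normalized geodesic. Strict monotonicity of $\{\tee_j\}$ will then force the desired ordering $\hat a < a < \hat b < b$ whenever $a < b$.

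\emph{Setup.} Assume $a < b$. Since $\gamma$ is markable in the past, I would choose a time $t_0 < \min(\hat a, \hat b)$ with $\gamma(t_0) \in N \CW$ for some $N \in \Mod$, and replace $\gamma$ by the normalized geodesic $\gamma'(s) := N^{-1}\gamma(s + t_0)$. Then $\gamma'(0) \in \CW$ and $\gamma'_+ = N^{-1}\gamma_+ \in K \setminus \Mod(\{\infty\} \cup E)$, so Lemma \ref{lemma:upshot} applies to $\gamma'$, producing sequences $(\ai_j)$ and $(\tee_j)$ with $\tee_0 = 0$. The intersections translate to $\gamma'(a - t_0) \in (N^{-1}M)\CW$ and $\gamma'(b - t_0) \in (N^{-1}M')\CW$, with corresponding positive spotter times $\hat a - t_0$ and $\hat b - t_0$.

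\emph{Identification.} Let $j$ be the unique index with $\tee_{j-1} < \hat a - t_0 \le \tee_j$. Since $\gamma'(a - t_0) \in (N^{-1}M)\CW$, Definition \ref{defi:CW}(2)--(3) supply both hypotheses of Lemma \ref{lemma:upshot}(2) at the time $\hat a - t_0$: namely the endpoint condition $(N^{-1}M)^{-1}\gamma'_+ \in K$ and the horoheight bound $\height_\infty (N^{-1}M)^{-1}\gamma'(\hat a - t_0) > h_0$. Hence $N^{-1}M = M_{\ai_j}$. Combining Definition \ref{defi:CW}(1) (which gives $\gamma'(s) \notin (N^{-1}M)\W$ for $s > a - t_0$) with Lemma \ref{lemma:upshot}(1) (which identifies $\tee_j$ as the last time $\gamma'(s) \in M_{\ai_j}\W$) forces $a - t_0 = \tee_j$. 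The same reasoning applied to $(b, \hat b, M')$ produces a unique index $j'$ with $b - t_0 = \tee_{j'}$ and $\hat b - t_0 \in (\tee_{j'-1}, \tee_{j'}]$.

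\emph{Conclusion.} Strict monotonicity of $\{\tee_j\}$ together with $a < b$ forces $j < j'$, so $\tee_j \le \tee_{j'-1} < \hat b - t_0$; translating back gives $a < \hat b$. Combined with the automatic strict inequalities $\hat a < a$ and $\hat b < b$ from Definition \ref{defi:CW}(3), this yields the desired chain $\hat a < a < \hat b < b$. The main anticipated subtlety is the normalization step: one must verify that translating time by $t_0$ and premultiplying by $N^{-1}$ preserves the spotter-time structure of $\CW$-translate intersections, so that Lemma \ref{lemma:upshot} applied to $\gamma'$ speaks to all the relevant data. Once that bookkeeping is in place, the result follows mechanically from the uniqueness properties already encoded in Lemma \ref{lemma:upshot} and the defining conditions of $\CW$.
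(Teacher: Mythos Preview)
Your proposal is correct and follows essentially the same approach as the paper: normalize so that Lemma \ref{lemma:upshot} applies, then use Conclusion (2) at the spotter time to identify the associated $M_{\ai_j}$, and use Conclusion (1) to pin the intersection time to $\tee_j$. The only cosmetic difference is that the paper proves the contrapositive-style equivalent statement ``$\max(\hat a,\hat b)<\min(a,b)\Rightarrow a=b$'' (forcing both spotter times into the same interval $(\tee_{j-1},\tee_j]$), whereas you argue directly that $a<b$ forces $j<j'$ and hence $a=\tee_j\le\tee_{j'-1}<\hat b$.
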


We can now show that if a geodesic starts in $\CW$, its next intersection with a translate of $\CW$ will be captured by an iteration of the shift map.

\begin{lemma}
\label{lemma:isspeedup}
Let  $\gamma$ be  a markable geodesic such that $\gamma(0)\in \CW$, and suppose that the next intersection with a translate of $\CW$ occurs at $M\CW$. Then for some $j\ge 1$, we have $M=M_{\ai_j}$ and $\gamma(\tee_j)\in M\CW$, where $\ai_j,\tee_j$ are defined for $\gamma$ in Lemma \ref{lemma:upshot}.
\begin{proof}
Let $t>0$ denote the time when $\gamma(t)\in M\CW$. We know that there must exist a spotter time $\widehat t$ associated to $t$ and moreover, by Lemma \ref{lemma:spotterorder}, we know that $0<\widehat t< t$. Let $j\ge 1$ be such that $\tee_{j-1}\le \widehat t\le \tee_j$. Then by conclusion (2) of Lemma \ref{lemma:upshot}, we have that $M=M_{\ai_j}$ and $\gamma(\tee_j)\in M\CW$.
\end{proof}
\end{lemma}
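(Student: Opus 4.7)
The plan is to extract the spotter time associated with the intersection $\gamma(t) \in M\CW$, locate it within the sequence $\{\tee_j\}$ produced by Lemma \ref{lemma:upshot}, and then read off both $M$ and the wall passage from the conclusions of that lemma.

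First I would let $t > 0$ be the intersection time and, using Definition \ref{defi:CW}(3) applied to the based vector $\gamma(t)$, extract a spotter time $\widehat t < t$ with $\height_\infty(M^{-1}\gamma(\widehat t)) > h_0$. The key preliminary observation is that $\widehat t$ is in fact strictly positive: since $\gamma(0) \in \CW$ carries its own (negative) spotter time $\widehat 0$, applying Lemma \ref{lemma:spotterorder} to the pair of intersection times $(0, t)$ with $0 < t$ yields $\widehat 0 < 0 < \widehat t < t$.

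Next, I would choose $j \ge 1$ so that $\tee_{j-1} < \widehat t \le \tee_j$, and then apply Conclusion (2) of Lemma \ref{lemma:upshot} at the time $\widehat t$. The required hypothesis $M^{-1}\gamma_+ \in K$ comes for free from $\gamma(t) \in M\CW$ via Definition \ref{defi:CW}(2), and the horoheight condition is exactly the defining property of the spotter time. Hence $M = M_{\ai_j}$.

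It then remains to upgrade $\gamma(\tee_j) \in M\W$ (Conclusion (1) of Lemma \ref{lemma:upshot}) to $\gamma(\tee_j) \in M\CW$, which amounts to verifying the three clauses of Definition \ref{defi:CW} for the based vector $\gamma(\tee_j)$: the wall-passage clause is Conclusion (1) of Lemma \ref{lemma:upshot}; the endpoint clause transfers directly from $\gamma(t) \in M\CW$ because both vectors share the forward endpoint $\gamma_+$; and the quantity $\widehat t - \tee_j < 0$ serves as a valid spotter time for the new basepoint since $\height_\infty(M^{-1}\gamma(\widehat t)) > h_0$. The only substantive subtlety is the strict positivity of $\widehat t$, which is supplied by Lemma \ref{lemma:spotterorder}; everything else is direct bookkeeping from the definitions of $\CW$ and of the sequences $\ai_j, \tee_j$.
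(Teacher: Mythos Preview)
Your proposal is correct and follows essentially the same route as the paper: extract the spotter time, use Lemma \ref{lemma:spotterorder} to place it in $(0,t)$, locate it in some interval $[\tee_{j-1},\tee_j]$, and invoke Conclusion (2) of Lemma \ref{lemma:upshot}. You are actually more careful than the paper in one respect: the paper asserts $\gamma(\tee_j)\in M\CW$ directly from Conclusion (2), whereas you correctly note that Lemma \ref{lemma:upshot} only gives $\gamma(\tee_j)\in M\W$ and then verify the three clauses of Definition \ref{defi:CW} by hand. One small point: for the spotter-time clause you need $\widehat t-\tee_j<0$ strictly, but your choice of $j$ allows $\widehat t=\tee_j$; this is harmless because $h_0>1$ forces $\height_\infty(M^{-1}\gamma(\tee_j))\le 1<h_0$ (points of $\W\subset\Sph$ have horoheight at most $1$), so equality cannot occur --- the paper makes the same observation at the end of the proof of Lemma \ref{lemma:upshot}.
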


We can now prove the Markable Geodesic Theorem.

\begin{proof}[Proof of Theorem \ref{thm:markable}]
For positive $i$, let $a_i$ and $M_i$ be the digits and mappings corresponding to the CF expansion of the forward endpoint $\gamma_+$, making property (2) immediate. We will define the remaining data iteratively. 

Let $t_1>0$ be the first positive time when $\gamma$ intersects an $\Mod$-translate of $\CW$. Lemma \ref{lemma:isspeedup} then provides an index $k$ such that $t_1=\tee_k$ and a corresponding number $\ai_k$ which we record as $i_1$ satisfying $\gamma(t_1)\in M_{i_1}\CW$. We will now show that properties (1), (4), and (3) hold on the initial segment $[t_0,t_1]$. 

Let $\hat{t}_1$ be a spotter time associated to the intersection of $\gamma$ with $M_{i_1}\CW$; that is, $\hat{t}_1<t_1$ and  $\height_{M_{i_1}\infty} \gamma(\hat{t}_1)>h_0>1$. 
Since $\gamma(t_0)\in\CW$ and $\gamma(t_1)\in M_{i_1}\CW$, then by Lemma \ref{lemma:spotterorder} we have that $\hat{t}_1\in [t_0,t_1]$. Let $\epsilon$ be the distance (not depending on $\gamma$) between the horospheres $\height_\infty(\cdot)=1$ and $\height_\infty(\cdot)=h_0$. Since $\gamma$ is a unit speed geodesic, $t_1-t_0>\epsilon$, and property (1) holds for $j=1$.

Next, the ``if'' direction of property (4) is immediate for $j=0$ and $j=1$ from the definitions. Now suppose $t\in(t_0,t_1]$ satisfies $\gamma(t)\in M\CW$ for some $M\in\Mod$. Then by definition of $t_1$ we have that $t=t_1$, and from Lemma \ref{lemma:upshot} we have that $M=M_{i_1}$. Thus the ``only if'' direction of property (4) holds for $t\in(t_0,t_1]$.

Suppose next that $t\in [t_0, t_1]$ satisfies $\height_{M\infty} \gamma(t)>h_0$ for some $M\in \Mod$. Then by Lemma \ref{lemma:upshot} there exists $\ell\ge 1$ and $t'>t$ such that $M=M_\ell$, and $\gamma(t')\in M_\ell \mathbb{W}$. By definition of $\CW$ via spotter times, we obtain that $\gamma(t')\in M_\ell\CW$. Since we assumed that $t_1$ is the first time that the forward ray of $\gamma$ intersects $\CW$, we have that $t_1\leq t'$. The converse inequality is given by Lemma \ref{lemma:spotterorder}, since $t$ is a spotter time associated to $t'$, so that $t_1=t'$ and $M=M_{i_1}$ follows from property (4). So property (3) holds for $j=1$.

To define $t_j,i_j$ for $j\geq 2$, we now consider a renormalized geodesic $\gamma'=M_{i_1}^{-1} \gamma$ with $\gamma'(0)=M_{i_1}^{-1} \gamma (t_1)$. We may then find $t'_1,i'_1$ for $\gamma'$ as we did above and let $t_2=t_1+t'_1$ and $i_2=i_1+i'_1$. Iterating this procedure gives $t_j,i_j$ for all $j\ge 1$. By the work above, properties (1), (3), and (4) hold on the corresponding initial segment of the renormalized geodesics and thus hold on the entire forward geodesic ray of $\gamma$.  Moreover from this definition, we see that property (5) holds for all $i,j,k$ that are non-negative.

To define $a_i, M_i$ for non-negative $i$ and $i_j,t_j$ for negative $j$, let $t_{-1}$ be the smallest (in norm) negative value for which $\gamma(t_{-1})$ intersects a $\Mod$-translate $M\CW$ of $\CW$. Consider a renormalized geodesic $\gamma'=M^{-1} \gamma$ with $\gamma'(0)= M^{-1}\gamma(t_{-1})\in \CW$. Set $i_{-1}=-i'_1$, $a_i=a'_{i+i_{-1}}$, and $M_i=M^{-1}M'_{i-i_{-1}}$ for $i_{-1}< i\le 0$. Since $\gamma'$ is a markable geodesic satisfying the conditions of the theorem and properties (1)--(4) hold for $\gamma'\vert_{[0,\infty]}$, so properties (1)--(5) hold for $\gamma\vert_{[t_{-1},\infty)}$. Iterating this process yields the remaining definitions and properties on the backwards ray of $\gamma$ (note that the full ray is covered by property (1)).
\end{proof} 

\section{Ergodicity}\label{sec:ergodicity}
We now prove the ergodicity of the shift map by first relating the cross-section $\CW$ studied in \S \ref{sec:markable} to geodesic flow on a quotient of $\Hyp$, and then to the shift map on the boundary. We start by recalling the ergodicity result for geodesic flow.
This section culminates in the ergodicity part of Theorem \ref{thm:main}.

\begin{remark}
All statements concerning ergodicity and measure will be made with respect to the relevant Hausdorff measure; depending on context this can be interpreted as Haar measure, surface measure, or Lebesgue measure.  Because there are no surprises along the way, we will suppress discussion of the details. 
\end{remark}

\subsection{Ergodicity of the Geodesic Flow}
The space $(\Hyp, d_\Hyp)$ is a symmetric space with a complete Riemannian metric with pinched negative curvature. In particular, any pair of points in $\Hyp$ (indeed, in $\overline{\Hyp}\cup\{\infty\}$) determines a unique geodesic. Alternately, a \emph{pointed} geodesic is determined by an element of the unit tangent bundle $T^1\Hyp$, namely a point in $\Hyp$ and a unit vector over it.

The geodesic flow on $T^1\Hyp$ moves vectors along geodesics as follows:
\begin{defi}[Geodesic Flow]
Given a vector $(h,v)\in T^1\Hyp$, let $\gamma: \R\rightarrow \Hyp$ be a unit-speed geodesic satisfying $\gamma(0)=h$ and $\gamma'(0)=v$.  The time-$t$ geodesic flow of $(h,v)$ is then given by $\phi_t(v):=(\gamma(t),\gamma'(t))\in T^1\Hyp$.
\end{defi}

Given a set $A\subset T^1\Hyp$, one says that $A$ is $\phi$-invariant, if for each $t\in \R$, the symmetric difference $(\phi_t^{-1}A) \triangle A$ has measure zero. We will be interested in sets $A$ that are furthermore invariant under a lattice $\Gamma\subset G$, i.e., $\mu(\gamma(A)\triangle A)=0$ for every $\gamma\in \Gamma$.

We can now state Mautner's Ergodicity Theorem (cf.~Moore's extension of the result to the frame bundle \cite{MR776417}):
\begin{thm}[Mautner's Ergodicity Theorem \cite{MR0084823}]
\label{thm:Mautner}
Let $\Gamma$ be a lattice in $G$, and $A\subset T^1\Hyp$ a $\Gamma$-invariant set that is furthermore invariant under geodesic flow. Then either $\mu(A)=0$ or $\mu(T^1\Hyp\setminus A)=0$.
\end{thm}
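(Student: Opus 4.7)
Since Theorem \ref{thm:Mautner} is the classical Mautner ergodicity statement, the approach I would take is the Hopf--Mautner argument. First, reduce to showing that any measurable $f \colon T^1\Hyp \to \R$ that is $\Gamma$-left-invariant and $\phi_t$-invariant for every $t$ must be a.e.\ constant (apply this with $f = \mathbbm{1}_A$). Using the identification $T^1\Hyp \cong G/M$, where $M \subset K$ is the stabilizer of a chosen unit tangent vector, lift $f$ to a right-$M$-invariant, right-$A$-invariant, left-$\Gamma$-invariant measurable function on $G$, with $A = \{a_t\}$ the one-parameter subgroup whose right action realizes the geodesic flow.

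The key step is to upgrade $A$-invariance to invariance under the stable and unstable horospherical subgroups $N^+$ and $N^-$. Geometrically, two vectors $v, v'$ lying on the same stable horocycle satisfy $d(\phi_t v, \phi_t v') \to 0$ exponentially as $t\to\infty$; applying the Birkhoff ergodic theorem to $\phi_t$ shows that the time averages of any continuous test function along the orbits of $v$ and $v'$ coincide, and invoking absolute continuity of the stable foliation (Anosov--Sinai) propagates this to force $f$ to be $N^+$-invariant almost everywhere. The mirror argument with $\phi_{-t}$ yields $N^-$-invariance. Since $G$ has real rank one, it is generated by $A$, $N^+$, and $N^-$ (Bruhat decomposition, or: a neighborhood of the identity is contained in $N^- A N^+$), so $f$ is right-$G$-invariant and left-$\Gamma$-invariant. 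Such a function descends to a constant on $\Gamma\backslash G$, which has finite Haar volume, giving the desired dichotomy $\mu(A) = 0$ or $\mu(T^1\Hyp \setminus A) = 0$.

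The main obstacle is the absolute continuity step: in order to pass from ``equality of Birkhoff averages along a single transversal pair'' to ``$f$ is a.e.\ constant along $N^+$-orbits,'' one must transfer null sets between the horocyclic foliation and a transversal family of positive measure. In constant curvature this is elementary, but in the pinched-negatively-curved setting relevant to $\Hyp^{n+1}_k$ for $k \neq \R$ it requires the Anosov--Sinai absolute continuity theorem, which is the only nontrivial input. A slicker alternative that I would probably prefer in the write-up is to cite Howe--Moore: the decay of matrix coefficients for the regular representation of $G$ on $L^2_0(\Gamma\backslash G)$ implies mixing of the $A$-action, which immediately implies ergodicity of $A$ on $\Gamma\backslash G / M = \Gamma\backslash T^1\Hyp$ without any explicit appeal to horospherical dynamics.
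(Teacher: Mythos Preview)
The paper does not prove Theorem~\ref{thm:Mautner} at all: it is quoted as a classical result with a citation to Mautner's original paper \cite{MR0084823} (and a pointer to Moore \cite{MR776417}), and is used thereafter as a black box. So there is no ``paper's own proof'' to compare against; your proposal goes well beyond what the paper does.

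That said, your sketch is a correct outline of one standard route. A small comment: you are blending two distinct arguments. The original Mautner phenomenon is purely representation-theoretic---if $\pi$ is a unitary representation and $v$ is fixed by $a_t$, then from $a_t n a_{-t}\to e$ for $n\in N^+$ one gets $\langle \pi(n)v,v\rangle = \lim_t \langle \pi(a_t n a_{-t})v,v\rangle = \Norm{v}^2$, forcing $\pi(n)v=v$; this avoids absolute continuity entirely. Your paragraph instead runs the Hopf argument (Birkhoff averages plus absolute continuity of the stable foliation), which is also valid but, as you note, imports the Anosov--Sinai theorem as a nontrivial input. Either line, or the Howe--Moore shortcut you mention at the end, would suffice; for a write-up the representation-theoretic Mautner argument or a direct citation of Howe--Moore is cleanest and matches the spirit of the paper, which treats this as background.
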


\subsection{Ergodicity of the Markable Cross-Section}\label{subsec:geodesicflow}

We continue working with a fixed complete, discrete, and proper Iwasawa continued fraction algorithm. Consider the natural projection $\pi_\Hyp: \Hyp \rightarrow \Mod\backslash \Hyp$.

Mautner's Theorem \ref{thm:Mautner} immediately applies to our setting. We record this in the following lemma, which can be interpreted either in the formulation of Theorem \ref{thm:Mautner} or, equivalently, using orbifold geodesic flow.
\begin{lemma}
Geodesic flow on $\Mod\backslash \Hyp$ is ergodic.
\begin{proof}
$\Mod$ is assumed to be discrete; to show it is a lattice we must show that there exists a finite-volume fundamental domain for $\Mod$. Let $K'$ be the region lying over both $K$ having horoheight at least $\epsilon>0$, for a choice of $\epsilon$ satisfying $\rad(K\times[0,\epsilon])^{-2}>1$. Given a point $h\in \Hyp$, we may use $\Zee$ to translate $h$ so that it lies over $K$, and invert it if necessary to increase its horoheight multiplicatively by at least $\rad(K\times[0,\epsilon])^{-2}$ (see \cite{1510.06033} for the interaction of horoheight and inversions), and translate again to place it over $K$. Within finitely many iterations, we obtain an image of $h$ contained in $K'$. Thus, $K'$ contains a fundamental domain for the $\Mod$ action on $\Hyp$. Lastly, $K'$ has horoheight bounded below and bounded extent along $\X$, so has finite hyperbolic volume. 
\end{proof}
\end{lemma}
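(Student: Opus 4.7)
The plan is to apply Mautner's Ergodicity Theorem (Theorem \ref{thm:Mautner}) with $\Gamma = \Mod$ acting on $T^1 \Hyp$, with the full tangent bundle taken as the invariant set. The hypothesis of the lemma implicitly comes with $\Mod$ being a discrete subgroup of $G$ (this is part of discreteness of the CF), so all that remains is to verify that $\Mod$ is actually a \emph{lattice}, i.e.\ that $\Mod \backslash \Hyp$ has finite volume. Once we have that, Mautner's theorem immediately gives ergodicity of geodesic flow on the quotient.

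To bound the covolume, I would construct an explicit finite-volume set $K' \subset \Hyp$ that contains a fundamental domain for $\Mod$. The natural candidate is the region lying above the fundamental domain $K \subset \X$ for $\Zee$, truncated from below at some horoheight $\epsilon > 0$:
\[
K' = \{\, h \in \Hyp \st \text{the vertical projection of } h \text{ lies in } K,\ \height_\infty(h) \geq \epsilon \,\}.
\]
Such a set has finite hyperbolic volume because it is bounded in the $\X$-directions (since $K$ has finite diameter and hence finite Haar measure in $\Zee \backslash \X$) and is cut off from the cusp at infinity, so the hyperbolic volume element, which blows up only as horoheight $\to 0$, integrates to a finite quantity.

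The key step is then to show that every $\Mod$-orbit on $\Hyp$ meets $K'$. The standard reduction-theory argument goes as follows: given $h \in \Hyp$, first translate by an element of $\Zee$ so that the vertical projection of the result lies in $K$; this leaves horoheight unchanged. If the resulting point already has horoheight $\geq \epsilon$, we are done. Otherwise, apply the inversion $\iota$, which sends a point of small horoheight based over $K$ to a point of substantially larger horoheight — quantitatively, by the interaction of inversion and horoheight recorded in \cite{1510.06033}, the horoheight is multiplied by at least $\rad(K \times [0,\epsilon])^{-2}$, and by properness ($\rad K < 1$) we can choose $\epsilon > 0$ small enough that this factor strictly exceeds $1$. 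Now translate back over $K$ by $\Zee$, and iterate. Because each inversion strictly multiplies the horoheight by a constant $> 1$, within finitely many steps the horoheight exceeds $\epsilon$ and the iteration halts in $K'$.

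The main obstacle is the last step: making precise the estimate on how inversion changes horoheight and ensuring the iteration terminates uniformly. Properness ($\rad K < 1$) is the essential input here, as it guarantees the strict multiplicative gain under inversion and prevents the iteration from stalling near the unit sphere. Discreteness of $\Mod$ plus the existence of the finite-volume set $K'$ containing a fundamental domain is enough to conclude that $\Mod$ is a lattice, after which Mautner's theorem finishes the proof.
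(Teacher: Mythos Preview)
Your proposal is correct and follows essentially the same approach as the paper's own proof: both verify that $\Mod$ is a lattice by constructing the finite-volume set $K'$ lying over $K$ with horoheight $\geq \epsilon$ (chosen so that $\rad(K\times[0,\epsilon])^{-2}>1$), and both use the translate--invert--translate reduction argument citing \cite{1510.06033} for the horoheight gain under inversion, after which Mautner's theorem applies. Your write-up is in fact slightly more explicit than the paper's about why properness is needed and why the volume is finite.
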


\begin{lemma}\label{lemma:piergodic}
The first-return map on $\pi_\Hyp(\CW)$ is a.e.~well-defined and ergodic.
\begin{proof}
Consider the family $\mathcal F \subset T^1\Hyp$ of geodesic rays that pass through $\CW$. Recalling that $\CW$ consists of geodesics coming from large horoheight through the wall $\W$ and proceeding to $K$, it is clear $\mathcal F$ has positive measure. Since $\Mod$ is discrete, $\pi_\Hyp(\mathcal F)$ also has positive measure. Thus, by ergodicity, almost every geodesic in $\Mod\backslash \Hyp$ passes through $\pi_\Hyp(\CW)$. 

Since $\pi_\Hyp(\CW)$ is generically transverse to geodesic flow, we conclude that almost every geodesic ray in $\pi_\Hyp(\CW)$ returns to $\pi_\Hyp(\CW)$, and that the resulting first-return map is ergodic.
\end{proof}
\end{lemma}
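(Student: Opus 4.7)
The plan is to deduce well-definedness and ergodicity of the first-return map from the ergodicity of geodesic flow on $\Mod\backslash\Hyp$ established in the preceding lemma, via the classical cross-section principle: if $\phi_t$ is an ergodic measure-preserving flow on $(Y,\mu)$ and $\Sigma\subset Y$ is a Borel cross-section of positive transverse measure with return times bounded below, then the first-return map on $\Sigma$ is well-defined a.e.\ and ergodic with respect to the induced transverse measure.

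First I would verify the three hypotheses for $\Sigma=\pi_\Hyp(\CW)$. Positive measure: $\CW\subset T^1\Hyp$ is cut out of the codimension-one family of vectors based on $\W$ by conditions on future endpoints (lying in $K\setminus \Mod(\{\infty\}\cup E)$, which has full measure in $K$) and on past trajectories (entering the open horoball $\height_\infty>h_0$, an open condition on tangent directions), so it carries positive codimension-one measure; discreteness of $\Mod$ preserves positive measure under $\pi_\Hyp$. Generic transversality: the geodesic vector field is tangent to $\CW$ only on a lower-dimensional subset, since $\CW$ comes with a full codimension-one family of forward directions over each base point. Uniform lower bound on return times: this is exactly property (1) of Theorem \ref{thm:markable}.

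With these hypotheses in hand, well-definedness follows from ergodicity of the geodesic flow together with Poincar\'e recurrence: the flow-invariant set of orbits meeting $\pi_\Hyp(\CW)$ has positive measure, hence full measure, so almost every orbit returns to $\pi_\Hyp(\CW)$ infinitely often, and transversality ensures the first-return time is strictly positive a.e. Ergodicity then follows because any first-return-invariant measurable $A\subset\pi_\Hyp(\CW)$ saturates to a flow-invariant set $\bigcup_{t\in\R}\phi_t(A)$ which is null or co-null by the previous lemma; the uniform lower bound on return times ensures this dichotomy descends to $A$ itself.

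The main obstacle will be carefully identifying and comparing the transverse measure on $\pi_\Hyp(\CW)$ to which the cross-section principle naturally applies --- the contraction of the Liouville volume on $T^1(\Mod\backslash\Hyp)$ by the geodesic vector field --- with the Hausdorff measure in which one usually phrases ergodicity statements, so that the ergodicity conclusion is meaningful in the form we need downstream. This amounts to verifying equivalence of these measures on the smooth, transverse locus of $\CW$, which should follow from the smoothness of $\Sph$ away from its intersection with $\X$ and the openness of the spotter-time condition, but the bookkeeping with the exceptional loci (e.g.\ $\Mod\cdot\infty$, $E$, and the non-transverse subset of $\CW$) requires care.
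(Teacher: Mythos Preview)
Your approach is essentially the same as the paper's: both invoke ergodicity of geodesic flow on $\Mod\backslash\Hyp$ and the standard cross-section principle, after checking that $\pi_\Hyp(\CW)$ has positive transverse measure and is generically transverse to the flow. One caution: you cite property (1) of Theorem \ref{thm:markable} for the uniform lower bound on return times, but that theorem is stated only for markable geodesics, and genericity of markability (Corollary \ref{cor:markablegeneric}) is deduced \emph{from} the present lemma---so either extract the horosphere-distance argument directly (it does not actually require markability), or note that the lower bound is not strictly needed for the ergodicity conclusion here.
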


We are now able to show that markable geodesics are generic:
\begin{cor}
\label{cor:markablegeneric}
Almost every geodesic $\gamma$ satisfying $\gamma(0)\in \CW$ is markable.
\begin{proof}
By the previous lemma, the first-return mapping on $\pi_\Hyp(\CW)$ is well-defined. Thus, given a generic geodesic ray $\gamma$ in $\CW$, $\pi_\Hyp(\gamma)$ will return to $\pi_\Hyp(\CW)$ after some time. Lifting to $\Hyp$, this implies that $\gamma$ intersects $M\CW$ for some $M\in \Mod$. Iterating the first-return map gives infinitely many intersections. Reversing the flow gives the same result for the backward orbit of $\gamma$.
\end{proof}
\end{cor}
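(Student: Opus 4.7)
The plan is to transfer the recurrence supplied by Lemma \ref{lemma:piergodic} from the quotient back up to $\Hyp$, where each return of $\pi_\Hyp\circ\gamma$ to $\pi_\Hyp(\CW)$ lifts to an intersection of $\gamma$ with some $\Mod$-translate of $\CW$. A time-reversal argument will then provide the backward half of the two-sided sequence of intersections required by Definition \ref{defi:markable}.

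First I would let $\psi$ denote the first-return map to $\pi_\Hyp(\CW)$ under the projected geodesic flow, which by Lemma \ref{lemma:piergodic} is defined on a full-measure subset $D_+\subseteq \pi_\Hyp(\CW)$ and is measure-preserving with respect to the cross-section measure induced from Liouville measure. By a standard Poincar\'e-type argument the countable intersection $\bigcap_{n\geq 1}\psi^{-n}(D_+)$, on which every positive iterate of $\psi$ is defined, still has full measure. Since $\Mod$ is discrete, $\pi_\Hyp$ is a local isometry, so this pulls back to a full-measure subset of $\CW$ whose vectors $v$ determine geodesics $\gamma$ with an increasing sequence of times $0<t_1<t_2<\cdots$ and elements $M_n\in\Mod$ satisfying $\gamma(t_n)\in M_n\CW$.

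For the backward half I would rerun the argument with geodesic flow reversed. Flow reversal is a measure-preserving involution on $T^1(\Mod\backslash\Hyp)$, and the image of $\CW$ under the vector-negation involution on $T^1\Hyp$ satisfies the analogue of the hypotheses used in Lemma \ref{lemma:piergodic} (the same set $\W$ lies underneath, and the definition of $\CW$ is symmetric in past/future up to the roles of $K$ and $\gamma_-$, which plays no role once we know that ergodic recurrence holds). Applying Lemma \ref{lemma:piergodic} to the reversed setup yields a second full-measure subset of $\pi_\Hyp(\CW)$ whose vectors return infinitely often to $\pi_\Hyp(\CW)$ in negative time. Intersecting with the forward-recurrence set still gives a full-measure subset of $\pi_\Hyp(\CW)$, and any $\gamma$ lifting into this intersection is markable.

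The main thing to verify is administrative rather than conceptual: that $\pi_\Hyp(\CW)$ is generically transverse to the projected geodesic flow so the first-return map is honestly well-defined (this is implicit in Lemma \ref{lemma:piergodic} and follows from the fact that $\CW$ lies over the codimension-one set $\W\subset\Sph$, which is transverse to vertical geodesics and hence, by $G$-equivariance, generically transverse to all geodesics), and that pullback along $\pi_\Hyp$ preserves the measure class used in the phrase ``almost every geodesic with $\gamma(0)\in\CW$,'' which is a routine consequence of $\Mod$-equivariance and discreteness. Once these checks are in place, the genericity of markability reduces to iteration of the a.e.-defined first-return map in both time directions.
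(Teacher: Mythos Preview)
Your argument is correct and follows essentially the same route as the paper: use Lemma \ref{lemma:piergodic} to get an a.e.-defined first-return map on $\pi_\Hyp(\CW)$, iterate and lift to obtain infinitely many forward intersections with $\Mod$-translates of $\CW$, then reverse the flow for the backward half. The only quibble is your parenthetical that ``the definition of $\CW$ is symmetric in past/future up to the roles of $K$ and $\gamma_-$'': it is not (conditions (1)--(3) of Definition \ref{defi:CW} are all asymmetric), but this does not matter since, as you correctly note, the proof of Lemma \ref{lemma:piergodic} only uses that $\CW$ has positive measure and is generically transverse, both of which survive the involution; alternatively one can simply observe that the first-return map preserves a finite cross-section measure and is therefore a.e.\ invertible.
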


Now that we have shown that almost all geodesics are markable, we can quickly prove that $\CW$ has no unexpected symmetries:

\begin{cor}\label{cor:piinjective}
The restriction of $\pi_\Hyp$ to $\CW$ is a.e.~injective.
\begin{proof}
Suppose the statement is false, and there exists a non-identity mapping $M\in \Mod$ such that $M\CW \cap \CW$ has positive measure. Then by the previous corollary there is a markable geodesic $\gamma$ with  $\gamma(0)\in M\CW \cap \CW$. But then we have $\gamma(0)\in \CW$ and $M\gamma(0)\in \CW$, and it follows from the Intersection Detection Property of Theorem \ref{thm:markable} that $M=M_{i_0}=\id$.
\end{proof}
\end{cor}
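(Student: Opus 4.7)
The plan is to proceed by contradiction, so that any non-trivial $\Mod$-identification on $\CW$ produces a markable geodesic violating Property (4) of Theorem \ref{thm:markable}.

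First I would suppose the conclusion fails. Since $\Mod$ is countable and the failure of a.e.~injectivity of $\pi_\Hyp|_\CW$ amounts to $\bigcup_{M \neq \id}(M\CW \cap \CW)$ having positive measure, at least one summand must also have positive measure. So I fix a non-identity $M \in \Mod$ for which $M\CW \cap \CW$ has positive measure.

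Next I would invoke Corollary \ref{cor:markablegeneric}: almost every geodesic $\gamma$ with $\gamma(0) \in \CW$ is markable. Because $M\CW \cap \CW \subset \CW$ has positive measure, there must exist a markable $\gamma$ with $\gamma(0) \in M\CW \cap \CW$.

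Finally, I would apply the Intersection Detection Property (Property 4) of Theorem \ref{thm:markable} at time $t=0$. The condition $\gamma(0) \in M\CW$ forces the existence of an index $j$ with $t_j = 0$ and $M_{i_j} = M$. Since $t_0 = 0$ and the sequence $(t_j)$ is strictly increasing, the only such $j$ is $0$, so $M = M_{i_0} = M_0 = \id$, contradicting the choice of $M$ and completing the proof.

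The main obstacle here is essentially organizational rather than substantive: the content of the result has already been built into Theorem \ref{thm:markable} and Corollary \ref{cor:markablegeneric}. One point worth double-checking is that the positive-measure intersection $M\CW \cap \CW$ genuinely meets the full-measure markable locus of $\CW$, which is automatic from subadditivity since the non-markable points form a null set inside $\CW$. Everything else amounts to matching the contradiction against the uniqueness clause $t_0 = 0$ in the marking.
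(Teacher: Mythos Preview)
Your proof is correct and follows essentially the same approach as the paper: both argue by contradiction, pick a non-identity $M$ with $M\CW\cap\CW$ of positive measure, use Corollary~\ref{cor:markablegeneric} to find a markable geodesic $\gamma$ with $\gamma(0)\in M\CW\cap\CW$, and then invoke the Intersection Detection Property at $t=0$ to force $M=M_{i_0}=\id$. Your version is slightly more explicit (countability of $\Mod$, strict monotonicity of the $t_j$), but the logic is identical.
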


\begin{defi}
Let us define a mapping $\psi: \CW\rightarrow \CW$ by $\psi(\gamma)(t)=M_{i_1}^{-1}\gamma(t+t_1)$, where $M_{i_1}$ and $t_1$ are given by 
Theorem \ref{thm:markable}. This is well-defined almost everywhere.
\end{defi}

\begin{prop}\label{prop:phiergodic}
The mapping $\psi: \CW\rightarrow\CW$ is ergodic.
\begin{proof}
The first-return map on $\pi_\Hyp(\CW)$ is ergodic by Lemma \ref{lemma:piergodic}. Corollary \ref{cor:piinjective} then allows us to identify $\pi_\Hyp(\CW)$ with $\CW$, and Theorem \ref{thm:markable} tells us that $\psi$ is indeed a lift of the first-return mapping on $\pi_\Hyp(\CW)$.
\end{proof}
\end{prop}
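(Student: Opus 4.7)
The plan is to reduce ergodicity of $\psi$ to the already-established ergodicity of the first-return map on $\pi_\Hyp(\CW)$, via the observation that $\psi$ ought to be the canonical lift of that first-return map to $\CW$. In other words, the proposition is really a bookkeeping statement: once we know that $\CW$ is an honest cross-section in the quotient and that markable geodesics are generic, the ergodicity of $\psi$ should follow for free.

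First, I would assemble the three inputs. Lemma \ref{lemma:piergodic} gives ergodicity of the first-return map $\overline\psi$ on $\pi_\Hyp(\CW)$. Corollary \ref{cor:piinjective} gives that $\pi_\Hyp|_\CW$ is a.e.\ injective, so it is (modulo a null set) a measurable isomorphism between $\CW$ and $\pi_\Hyp(\CW)$ that is compatible with the natural measures coming from geodesic flow. Corollary \ref{cor:markablegeneric} provides that $\psi$ itself is well-defined a.e.\ on $\CW$, since the construction of $\psi$ via the data $(t_1, M_{i_1})$ of Theorem \ref{thm:markable} requires only that the geodesic be markable.

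The heart of the argument is to verify the commutation relation $\pi_\Hyp \circ \psi = \overline\psi \circ \pi_\Hyp$ a.e. Let $\gamma \in \CW$ be a generic markable geodesic. Under projection, $\pi_\Hyp(\gamma)$ flows forward and first returns to $\pi_\Hyp(\CW)$ at the smallest $t > 0$ for which $\gamma(t) \in M\CW$ for some $M \in \Mod$. By the Intersection Detection property (item (4) of Theorem \ref{thm:markable}), this smallest such $t$ is exactly $t_1$, with $M = M_{i_1}$. Consequently, $\overline\psi(\pi_\Hyp(\gamma))$ is represented in $\Hyp$ by $\gamma(t_1)$ together with its forward-pointing tangent vector, which is the same orbit as $M_{i_1}^{-1} \gamma(\cdot + t_1) = \psi(\gamma)$ based at time $0$. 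Thus $\psi$ and $\overline\psi$ are conjugated by the measurable isomorphism $\pi_\Hyp|_\CW$, and since ergodicity is invariant under such a conjugation, $\psi$ is ergodic.

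The only step that I expect to require a bit of care rather than pure formality is ensuring that the measure on $\CW$ used implicitly in the statement of ergodicity is compatible with the one on $\pi_\Hyp(\CW)$ used in Lemma \ref{lemma:piergodic} — that is, that the a.e.\ injectivity of $\pi_\Hyp|_\CW$ actually yields an isomorphism of measure spaces (not merely a measurable bijection). This is standard for transverse sections of smooth flows on Riemannian manifolds, but it is the one place where one must explicitly invoke the fact, noted at the start of Section \ref{sec:ergodicity}, that all measures involved are the natural Hausdorff/Haar/Lebesgue measures inherited from $T^1\Hyp$.
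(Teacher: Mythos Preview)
Your proposal is correct and follows essentially the same approach as the paper: invoke Lemma \ref{lemma:piergodic}, use Corollary \ref{cor:piinjective} to identify $\CW$ with $\pi_\Hyp(\CW)$, and use Theorem \ref{thm:markable} to show $\psi$ lifts the first-return map. Your version is simply more explicit about the commutation relation (via Intersection Detection) and the measure-compatibility caveat, both of which the paper leaves implicit.
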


\subsection{Ergodicity of a Natural Extension and of the Shift Map}\label{subsec:extension}

At this point, we would like to project $\CW$ onto the forward endpoint and use the ergodicity of $\psi$ to derive the ergodicity of $T$. However, the transformation that $\psi$ induces on the forward endpoint is a jump transformation associated to $T$ and it is not the case that the ergodicity of a jump transformation implies the ergodicity of the original transformation. (See, for example, Chapters 17--19 of \cite{SchweigerBook}.) So we will instead project onto both endpoints and analyze the resulting transformation more carefully.

Throughout the rest of this section, we will assume, without directly stating it, that all statements about sets hold up to sets of zero measure and that any geodesic under consideration is markable, since this is a generic condition. We continue to work with a complete, discrete, and proper Iwasawa CF expansion.

Let $\pi:\CW\to K\times \X$ be the injective map from a geodesic $\gamma$ intersecting $\CW$ to its forward and backward endpoints $(\gamma_+,\gamma_-)$. On $\pi(\CW)$, $\psi$ induces the isomorphic mapping $\Psi=\pi\circ \psi\circ \pi^{-1}$, which is ergodic on $\pi(\CW)$. Since, by the Markable Geodesic Theorem \ref{thm:markable}, $\psi$ acts on a geodesic $\gamma$ by the mapping $M_{i_1}$ associated to $\gamma_+$, we conclude that $\Psi(\gamma_+,\gamma_-)=(M_{i_1}^{-1}\gamma_+,M_{i_1}^{-1}\gamma_-)$. 

    Let us extend the shift map $T$ to act on $K\times \X$ by $\hat{T}(z,w)=(M_1^{-1} z,M_1^{-1} w)$ where $M_1\in\Mod$ is the mapping associated to $z$. Since $Tz=M_1^{-1}z$, this truly is an extension. Let $\overline{K}=\cup_{i=0}^\infty \hat{T}^i \pi(\CW)\subset K\times \X$. 

We wish to compare how $\Psi$ acts on $\pi(\CW)$ with how $\hat{T}$ acts on $\overline K$.  In the following lemma, will show that the restriction $\htk$ of $\hat{T}$ to $\overline{K}$ is well-behaved.

\begin{lemma}
\label{lemma:Tinvariance}
$\htk: \overline K \rightarrow \overline K$ is surjective. Furthermore, a.e.~point of $\overline K$ returns to $\pi(\CW)$ within finitely many iterations of $\htk$, so that we have \[\label{eq:overlineKdef} \overline{K} = \bigcup_{i=0}^\infty \htk^{-i} \pi(\CW).\]
\begin{proof}
It is immediate from the definition of $\overline K$ that $\htk\overline K \subset \overline K$. To prove the reverse containment, we  wish to show that for any $(z,w)\in\overline K$, there exists $(z',w')\in \overline K$ with $\htk(z',w')=(z,w)$.

Since $(z,w)\in \overline{K}$, there exists a smallest non-negative integer $i$ such that $(z,w)\in \htk^i \pi(\CW)$. If $i\ge 1$, then clearly there is $(z',w')\in \htk^{i-1}\pi(\CW)$ such that $\htk(z',w')=(z,w)$. 

So suppose $i=0$. Then $(z,w)\in \pi(\CW)$. Since $\Psi$ is an onto map of $\pi(\CW)$ to itself, for a.e.~$(z,w)$ there exists some $(z'',w'')$ such that $\Psi(z'',w'')=(z,w)$. Thus, if we let $i_1$ be the index so that $\Psi(z'',w'')=(M_{i_1}^{-1}z'',M_{i_1}^{-1} w'')=\htk^{i_1}(z'',w'')$, then we have that $(z,w)\in \htk^{i_1} \pi(\CW)$ with $i_1>0$ and the argument of the previous paragraph applies.

Implicit in the last paragraph is the idea that for a.e.~$(z,w)\in \pi(\CW)$, $\Psi(z,w)\in \pi(\CW)$ as well, so that $(z,w)$ returns to $\pi(\CW)$ in a finite number of iterations of $\htk$. Since every $(z,w)\in \overline{K}\setminus \pi(\CW)$ appears in some $\htk^i \pi(\CW)$, say, $\htk^i (z'',w'')=(z,w)$, we can also extend this to say that a.e.~point in $\overline{K}$ returns to $\pi(\CW)$ under a finite number of iterations.

This immediately shows that $\overline{K} \subset \bigcup_{i=0}^\infty \htk^{-i} \pi(\CW)$ and the reverse inclusion is trivial.
\end{proof}
\end{lemma}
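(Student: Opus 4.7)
The plan is to exploit the definition $\overline K = \bigcup_{i\geq 0} \hat T^i \pi(\CW)$ together with the (a.e.) invertibility of $\Psi$ on $\pi(\CW)$, which is inherited from the fact that $\psi$ is a first-return map for geodesic flow. Forward invariance $\htk(\overline K) \subset \overline K$ is immediate from the definition, so surjectivity reduces to producing, for each $(z,w) \in \overline K$, a preimage that lies in $\overline K$.

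To do this, I would pick the smallest $i \geq 0$ such that $(z,w) \in \hat T^i \pi(\CW)$. If $i \geq 1$, a preimage is handed to us in $\hat T^{i-1}\pi(\CW) \subset \overline K$ by construction, so it suffices to treat $i=0$, i.e., $(z,w) \in \pi(\CW)$. Here I would invoke the invertibility of $\psi$ as a first-return map on $\pi_\Hyp(\CW)$, transferred through the a.e.~injection $\pi$ (Corollary \ref{cor:piinjective}), to find $(z'',w'') \in \pi(\CW)$ with $\Psi(z'',w'') = (z,w)$. Since $\Psi$ acts on $(z'',w'')$ by $\htk^{i_1}$ for some $i_1 \geq 1$ (the first-return index of the corresponding geodesic), this exhibits $(z,w) \in \hat T^{i_1}\pi(\CW)$ and reduces us to the already-handled case $i \geq 1$.

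For the finite-return claim, the case $(z,w) \in \pi(\CW)$ is immediate from the same observation: $\Psi(z,w) = \htk^{i_1}(z,w) \in \pi(\CW)$ for a.e.~such point. For $(z,w) \in \overline K \setminus \pi(\CW)$, I would write $(z,w) = \htk^j(z'',w'')$ with $(z'',w'') \in \pi(\CW)$ and $j \geq 1$, and then iterate $\Psi$ on $(z'',w'')$ to produce an unbounded sequence $i_1 < i_2 < \cdots$ with $\htk^{i_k}(z'',w'') \in \pi(\CW)$; choosing the first $i_k > j$ gives $\htk^{i_k-j}(z,w) \in \pi(\CW)$.

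The display identity $\overline K = \bigcup_{i \geq 0} \htk^{-i}\pi(\CW)$ then falls out for free: the inclusion $\supset$ is automatic because $\htk^{-i}\pi(\CW)$ denotes the preimage taken \emph{within} $\overline K$, while $\subset$ is just the finite-return statement. The main obstacle is the $i=0$ step of surjectivity, since it is precisely there that we must import nontrivial dynamical input — namely, that $\psi$ is a genuine first-return map, hence a.e.~bijective — and push it through the identification of $\pi(\CW)$ with $\pi_\Hyp(\CW)$; everything else is bookkeeping around the definition of $\overline K$.
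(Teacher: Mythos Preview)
Your proposal is correct and follows essentially the same approach as the paper: forward invariance is immediate, surjectivity is handled by choosing the minimal $i$ with $(z,w)\in \hat T^i\pi(\CW)$ and, in the $i=0$ case, invoking the (a.e.) surjectivity of $\Psi$ inherited from $\psi$ being a first-return map to reduce to $i\geq 1$; finite return is then deduced from $\Psi$ acting as a power of $\htk$. Your treatment of the finite-return claim for points outside $\pi(\CW)$ is in fact slightly more explicit than the paper's, which simply asserts that the argument ``extends'' without spelling out the iteration of $\Psi$ past the index $j$.
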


We restrict our attention to $\overline{K}$, setting $\hat{T}:=\htk$.

The equation \eqref{eq:overlineKdef} looks similar to the definition of a natural extension, so raises the following question, which we will not address:
\begin{question}
Is $\hat{T}: \overline K\rightarrow \overline K$ the natural extension of $T: K\rightarrow K$?
\end{question}
One can look at, for example, \cite{EINN} for a discussion of the natural extension in the case of the A. Hurwitz complex CF.

Now we can state the connection between $\Psi$ and $\hat{T}$:

\begin{lemma}\label{lemma:induced}
$\Psi$ is the transformation induced by restricting $\hat{T}$ to $\pi(\CW)$.
\begin{proof}
Since $\Zee$ is countable, the set of points in $K$ with eventually periodic continued fraction expansions is countable as well, and hence, since we are working up to measure zero, we may assume any points under consideration are not eventually periodic.

Let $(z,w)\in\pi(\CW)$ and let $i(z,w)$ be the minimal positive integer such that $\hat{T}^{i(z,w)}(z,w)\in\pi(\CW)$. The existence of $i(z,w)$ a.e.~follows from Lemma \ref{lemma:Tinvariance}. We wish to show that, where it exists, $\hat{T}^{i(z,w)}(z,w)=\Psi(z,w)$. 

Let $\gamma$ be the markable geodesic with endpoints $(z,w)$, and let $i_1$ be the corresponding value from the marking in Theorem \ref{thm:markable}. Then $\Psi(z,w)=(M_{i_1}^{-1}z,M_{i_1}^{-1}w)$ and thus $\hat{T}^{i_1}(z,w)=\Psi(z,w)\in\pi(\CW)$. By the minimality of $i(z,w)$, we have that $i(z,w)\le i_1$. We must show that $i(z,w)$ cannot be strictly less than $i_1$.

Suppose $i(z,w)<i_1$ and consider the mapping  $M=M_{i(z,w)}$. Since $(M^{-1}z,M^{-1}w)\in\pi(\CW)$, $M^{-1}\gamma$ intersects $\CW$. This means $\gamma$ intersects $M\CW$ and thus by the Intersection Detection property of Theorem \ref{thm:markable}, $M=M_{i_j}$ for some $j$. Since the two mappings are equal, we have that $T^{i(z,w)}z=M_{i(z,w)}^{-1}z=M_{i_j}^{-1} z=T^{i_j} z$. But since we have assumed $z$ does not have an eventually periodic expansion, this is only possible if $i(z,w)=i_j$. And since there are no positive $i_j$ between $0$ and $i_1$, we must have that $i(z,w)=i_1$, which completes the proof.
\end{proof}
\end{lemma}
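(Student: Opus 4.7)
The plan is to show that for a.e.\ $(z,w)\in\pi(\CW)$, the smallest positive integer $n$ with $\hat T^n(z,w)\in\pi(\CW)$ is exactly the index $i_1$ coming from the marking of the associated geodesic, and moreover that $\hat T^{i_1}(z,w)=\Psi(z,w)$. Once this is established, $\Psi$ is by definition the first-return map of $\hat T$ to $\pi(\CW)$, which is the statement of the lemma.

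First I would establish the upper bound $n\le i_1$. Given $(z,w)\in\pi(\CW)$, let $\gamma$ be the (a.e.\ unique) markable geodesic with $\gamma(0)\in\CW$ and $(\gamma_+,\gamma_-)=(z,w)$. The Relation to Gauss Map property of Theorem \ref{thm:markable} identifies $M_{i_1}$ as the branch of $T^{-i_1}$ at $\gamma_+$, so $M_{i_1}^{-1}z=T^{i_1}z$; a parallel identity for the backward endpoint follows from applying $M_{i_1}^{-1}$ directly. Hence $\Psi(z,w)=(M_{i_1}^{-1}z,M_{i_1}^{-1}w)=\hat T^{i_1}(z,w)$, and this point lies in $\pi(\CW)$ because $\psi(\gamma)\in\CW$ by construction. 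This already gives $n\le i_1$.

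Next I would prove the matching lower bound $n\ge i_1$ by contradiction. Suppose some $1\le i<i_1$ satisfies $\hat T^i(z,w)=(M_i^{-1}z,M_i^{-1}w)\in\pi(\CW)$. Then there is a markable geodesic $\gamma'$ with these endpoints and $\gamma'(0)\in\CW$, and since endpoints in the boundary determine a geodesic in the negatively curved space $\Hyp$ up to reparametrization, $M_i\gamma'=\gamma$. Consequently $\gamma$ meets $M_i\CW$ at some positive time, and the Intersection Detection property of Theorem \ref{thm:markable} forces $M_i=M_{i_j}$ for some $j\ge 1$ with the meeting occurring at $t=t_j$.

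The main obstacle is to upgrade the equality $M_i=M_{i_j}$ in $\Mod$ to the numerical equality $i=i_j$. Applying both sides to $z$ only yields $T^iz=T^{i_j}z$, a relation which a priori can hold with $i\ne i_j$ when $z$ has an eventually periodic Gauss orbit. To handle this, I would restrict to non-eventually-periodic $z$: since $\Zee$ is countable and each eventually periodic orbit is determined by finitely many digits, such points form a countable (hence measure-zero) set that may be safely discarded. On the remaining full-measure set, $T^iz=T^{i_j}z$ forces $i=i_j$, contradicting $i<i_1\le i_j$ (the latter because $j\ge 1$ and $\{i_j\}$ is strictly increasing with $i_0=0$). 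Combined with the upper bound, this yields $n=i_1$ and $\hat T^n(z,w)=\Psi(z,w)$, completing the identification of $\Psi$ with the induced transformation.
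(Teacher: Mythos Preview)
Your proof is correct and follows essentially the same route as the paper's: both obtain the upper bound by identifying $\hat T^{i_1}$ with $\Psi$ on $\pi(\CW)$, and both obtain the lower bound by invoking Intersection Detection and then using non-eventual-periodicity of $z$ to promote $M_i=M_{i_j}$ to $i=i_j$. One small note: your claim that the intersection with $M_i\CW$ occurs at \emph{positive} time (whence $j\ge 1$) is not justified as stated, but it is also unnecessary---the paper simply allows any $j$, and once $i=i_j$ with $i\ge 1$, the monotonicity of $\{i_j\}$ with $i_0=0$ already forces $j\ge 1$ and hence $i_j\ge i_1$.
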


 While there is a close connection between the dynamical properties of a map and the dynamical properties a new map induced from the first, in general one cannot use the ergodicity of the induced map to conclude the ergodicity of the original map; however, Lemma \ref{lemma:induced} when combined with \eqref{eq:overlineKdef} is enough to prove the following result immediately (see Theorem 17.2.4 of \cite{SchweigerBook} for full details).

 \begin{lemma}\label{lemma:kbarergodic}
$\hat{T}$ is ergodic on $\overline{K}$.
\end{lemma}

We can now project to the first coordinate to complete the proof of Theorem \ref{thm:main} (see also \S \ref{subsec:condensed proof}):
\begin{proof}[Proof of Theorem \ref{thm:main}]
Let us suppose the shift map is not ergodic. Then there are complementary subsets $A$ and $B$ of $K$ that are both invariant under $T$ and have non-zero measure. We may extend these to complementary subsets $A', B'$ of $\overline K$ by taking their preimages under projection to the first coordinate. Both $A'$ and $B'$ have positive measure since $\pi(\CW)\subset \overline K$ and we claim there exists a neighborhood $U$ of infinity in $\X$ such that $K\times U\subset \pi(\CW)$.

Let us now show that this set $U$ does exist. Consider any pair $(\gamma_+,\gamma_-)$ of endpoints of a geodesic $\gamma$, such that $\gamma_+\in K$ and $\norm{\gamma_-}$ is sufficiently large. In particular, if $\norm{\gamma_-}>1+\epsilon$ with $\epsilon$ as in Lemma \ref{lemma:horocap}, then the conclusion of that lemma and the definition of $\W$ imply that the geodesic $\gamma$ passes through $\W$. Moreover, by taking the framework of Lemma \ref{lemma:horocap} and dilating, we see that if $\norm{\gamma_-}$ is sufficiently large, then the geodesic must travel far into the cusp at infinity: namely, there must exist a time $\hat{t}$ such that $\height_\infty(\gamma(\hat{t}))>h_0$. Thus, $\gamma$ does intersect $\CW$ and $(\gamma_+,\gamma_-)\in\pi(\CW)$ as desired. (Since we are working up to measure zero sets, we may assume that $\gamma_+\not\in \Mod(\{\infty\}\cup E)$ as well.)

Consider $\hat{T}^{-1}A'$. Any point $(z,w)\in \overline{K}$ such that $\hat{T}(z,w)\in A'$ must clearly satisfy $Tz\in A$. In other words $z\in T^{-1} A=A$. Thus $(z,w)\in A'$, so $\hat{T}^{-1}A'\subset A'$ and likewise $\hat{T}^{-1}B'\subset B'$. Hence $A'$ and $B'$ are both disjoint $T$-invariant subsets of $\overline K$ with positive measure. The ergodicity of $\hat T: \overline K\rightarrow \overline K$ provided by Lemma \ref{lemma:kbarergodic} gives the contradiction.
\end{proof}

\begin{remark}
We have proved ergodicity with respect to Lebesgue measure, but with the framework we have developed, we may now consider the question of absolutely continuous invariant measures as well.

First, note that since geodesic flow preserves Haar measure on $\Hyp$, there is a canonical derivation of an invariant measure for $\psi$ on $\CW$. This then projects to an invariant measure for $\Psi$ on $\pi(\CW)$. Since $\Psi$ is the transformation induced by restriction $\hat{T}$ to $\pi(\CW)$, there is again a canonical derivation of an invariant measure for $\hat{T}$ on $\overline{K}$ (see \cite[Thm.~17.1.6]{SchweigerBook}). From here projection onto the first coordinate would give an invariant measure for $T$ on $K$. All of these operations preserve the fact that they are absolutely continuous with respect to the corresponding Hausdorff measure.

 Note that even though the measure on $\CW$ and $\pi(\CW)$ is bounded, the measure on $\overline K$ and $K$ may be infinite. Indeed, this occurs for the Rosen continued fractions \cite{GH}.
\end{remark}

\subsection{Application: Ergodic components of Incomplete Iwasawa CFs}
In this subsection we will prove Theorem \ref{thm:notmain}.

Let $\mathcal{R}$ denote the set of central symmetries of $\Mod$ (cf.~Definition \ref{defi:centrallySymmetric}).

\begin{lemma}\label{lemma:quasicommutative}
Let $r\in \mathcal{R}$. Then for any $a\in \Zee$ there exists $a'\in \Zee$, $r'\in \mathcal{R}$ such that $a\iota r = r'a'\iota$. Moreover if $r'$ is the identity, then $r$ must be as well.
\end{lemma}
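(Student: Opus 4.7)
The plan is to first understand how $\iota$ conjugates a central symmetry, and then use the unique decomposition of elements of $\Stab_\Mod(\infty)$ from Definition \ref{defi:centrallySymmetric} to extract the desired rewriting.

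First, I would show that $\iota r \iota^{-1}$ lies in $\mathcal{R}$. Since $r$ fixes both $0$ and $\infty$ (as a rotation around the origin) and $\iota$ swaps $0$ and $\infty$ (this is the usual Koranyi behavior, or by the matrix form from Lemma \ref{lemma:inversionform}), the element $\iota r \iota^{-1}\in\Mod$ fixes both $0$ and $\infty$. Using condition (4) of Definition \ref{defi:centrallySymmetric}, write $\iota r \iota^{-1}= r'' a''$ with $r''\in\mathcal{R}$, $a''\in\Zee$. Since $r''$ fixes $0$, the assumption $(\iota r\iota^{-1})(0)=0$ forces $a''(0)=0$; condition (2) then gives $a''=\id$. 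Hence $\iota r = r''\iota$ for some $r''\in\mathcal{R}$.

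Next, I would apply this identity to the left-hand side of our target equation to get
\[
a\iota r = a r'' \iota.
\]
Now $a r'' \in \Stab_\Mod(\infty)$, so by condition (4) of Definition \ref{defi:centrallySymmetric} we may write $a r'' = r' a'$ uniquely with $r'\in\mathcal{R}$, $a'\in\Zee$. Substituting yields $a\iota r = r' a' \iota$, which is the desired identity.

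For the second claim, suppose $r' = \id$. Then $a r'' = a'$, so $r'' = a^{-1}a'\in\Zee$. But $r''\in\mathcal{R}$ fixes $0$, and by condition (2) the only element of $\Zee$ fixing $0$ is the identity; hence $r''=\id$. Combined with $\iota r = r''\iota=\iota$, this gives $r=\id$.

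The step I expect to require the most care is the first one, namely checking that $\iota r\iota^{-1}$ genuinely lands back in $\mathcal{R}$ rather than merely in the rotation subgroup of $\Isom(\X)$: the argument depends on invoking the uniqueness in condition (4) of Definition \ref{defi:centrallySymmetric} together with the ``only identity fixes $0$'' property (2) to eliminate a stray translation factor. Once that is in hand, the rest is essentially an algebraic manipulation in $\Stab_\Mod(\infty)$.
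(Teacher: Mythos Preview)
Your proposal is correct and follows essentially the same approach as the paper's proof: both observe that $\iota r\iota^{-1}$ fixes $0$ and $\infty$ and hence lies in $\mathcal{R}$, then use the unique $r'a'$ decomposition of $a\iota r\iota^{-1}\in\Stab_\Mod(\infty)$ from Definition~\ref{defi:centrallySymmetric}(4), and finally deduce the ``moreover'' clause from $\mathcal{R}\cap\Zee=\{\id\}$. Your version is actually slightly more careful than the paper's, since you spell out via conditions (2) and (4) why $\iota r\iota^{-1}$ lands in $\mathcal{R}$ rather than merely asserting it.
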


\begin{proof}
Since $a\iota r \iota^{-1}\in\Stab_\Mod(\infty)$, the decomposability assumption on $\mathcal{R}$ implies that there exist $r'\in\mathcal{R}$ and $a'\in \Zee$ such that $a\iota r \iota^{-1}\iota=r'a'\iota$, as desired.

Let $r''$ denote $\iota r \iota^{-1}$. Since this fixes $0$ and $\infty$, it must belong to $\mathcal{R}$. So if $r'$ is the identity, then $r'a'=ar''$ implies that $a^{-1}a'=r''$. But $\mathcal{R}\cap \Zee=\{\id\}$, so $r''$ and hence $r$ must be the identity.
\end{proof}

At this point we wish to start connecting the behavior of an incomplete Iwasawa CF with $n$ central symmetries with the behavior of its completion.

As such let us specialize our notation. Let $K$ be the symmetric fundamental domain for the incomplete continued fraction over $\Zee$ and let $K_c$ be an associated fundamental domain for the \emph{completion} of the continued fraction over $\Stab_\Mod(\infty)$ so that $K=\bigcup_{r\in\mathcal{R}}r K_c$ up to a set of measure zero. Let $T$ be the shift map on $K$ that acts by $\iota$ and then an element of $\Zee$. Let $T_c$ be the shift map on $K_c$ that acts by $\iota$ and then an element of $\Stab_\Mod(\infty)$.

\begin{lemma}
With the notation of the paragraph directly above, the  map $T$ on $K$ is isomorphic to a skew-product $T_c\rtimes f$ on $K_c\times \mathcal{R}$ over the  map $T_c$ on $K_c$.
\begin{proof}
There is an obvious isomorphism between  $K_c\times \mathcal{R}$ and $K$ given by $(z,r)\leftrightarrow rz$. The map $T$ acts on $rz$ by $a\iota$ for some $a\in \Zee$. By Lemma \ref{lemma:quasicommutative}, there exists $a'\in \Zee, r'\in R$ such that $T(rz)=r'a'\iota (z)$. Let $r''$ be such that $r''a'\iota(z)\in K_c$, so that $T$ can be considered as acting on the space $K_c\times R$ by 
\(
(z,r)\mapsto (r''a'\iota z, r'r''^{-1}).
\)
Since $r''a'\in \Stab_\Mod(\infty)$, this maps $(z,r)$ to $T_c(z)$ in the first coordinate. Let $f(z,r)=r'r''^{-1}$, so that $T=T_c\rtimes f$. To show that $T_c\rtimes f$ is truly a skew-product and finish the proof, we must show that for almost all fixed $z$, $f(z,\cdot)$ is an injection (and hence a bijection).

Suppose that $f(z,\cdot)$ is not an injection, so that $r_1\neq r_2$ but $f(z,r_1)=f(z,r_2)$. This implies that $T(r_1z)=T(r_2 z)$. Let  $a_1,a_2\in \Zee$ be such that $T$ acts by $a_1\iota $ on $r_1z$ and acts by $a_2 \iota$ on $r_2 z$. Then $a_1\iota r_1 \iota^{-1} (\iota z)=a_2\iota r_2 \iota^{-1} (\iota z)$. But for almost all $z$ (namely, those $z$ not belonging to the exceptional set $E$ \eqref{eq:Edefn}), $a_1\iota r_1\iota^{-1}$ is the unique element of $\Stab_\Mod(\infty)$ that brings $\iota z$ to $K$. Thus, for such $z$, $a_1\iota r_1 \iota^{-1} =a_2\iota r_2 \iota^{-1}$. Recall from the proof of the previous lemma that $\iota r_1\iota^{-1},\iota r_2\iota^{-1}\in \mathcal{R}$. So by the uniqueness of the decomposition, we have that $\iota r_1 \iota^{-1}=\iota r_2\iota^{-1}$, and hence $r_1=r_2$. So $f(z,\cdot)$ is injective.
\end{proof}
\end{lemma}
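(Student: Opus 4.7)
The plan is to exhibit an explicit measurable bijection between $K$ and $K_c \times \mathcal{R}$, push the Gauss map $T$ through it, and verify that the pushed map has the skew-product form $T_c \rtimes f$ with the ``fiber'' map $f$ acting as a bijection on $\mathcal{R}$ for almost every base point.

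First, I would record the tautological identification $\Phi : K_c \times \mathcal{R} \to K$ given by $(z,r) \mapsto rz$. Since $K = \bigcup_{r \in \mathcal{R}} r K_c$ up to measure zero, since the fundamental domain $K$ is symmetric, and since the decomposition $\Stab_{\Mod}(\infty) = \Zee \cdot \mathcal{R}$ in Definition \ref{defi:centrallySymmetric} is unique, $\Phi$ is a measurable bijection (again up to measure zero).

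Next I would compute $T$ in these coordinates. For $(z,r)$ with $z \in K_c \setminus E$, the Gauss map acts by $T(rz) = a \iota(rz) = a\iota r \iota^{-1}(\iota z)$ for some $a \in \Zee$. Applying Lemma \ref{lemma:quasicommutative} to $a\iota r$ we get $r', a'$ with $a \iota r = r' a' \iota$, so $T(rz) = r' a' \iota(z)$. There is then a unique $r'' \in \mathcal{R}$ such that $r'' a' \iota(z) \in K_c$ (equivalently $(r')^{-1} r'' \in \mathcal{R}$ up to the correction coming from moving $r''$ past $a'$ via Lemma \ref{lemma:quasicommutative}). Because $r'' a' \in \Stab_{\Mod}(\infty)$ and $r'' a' \iota(z)$ lies in the fundamental domain $K_c$ for $\Stab_{\Mod}(\infty)$, this first coordinate is exactly $T_c(z)$. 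Thus under $\Phi$ the map $T$ reads $(z,r) \mapsto (T_c(z),\, r'(r'')^{-1})$. Setting $f(z,r) := r' (r'')^{-1}$ gives a measurable map $f : K_c \times \mathcal{R} \to \mathcal{R}$ and displays $T$ as $T_c \rtimes f$.

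The main (and only subtle) obstacle is showing that $f(z, \cdot) : \mathcal{R} \to \mathcal{R}$ is a bijection for almost every $z$; equivalently, that it is injective, since $\mathcal{R}$ is finite. Suppose $f(z, r_1) = f(z, r_2)$. Unwinding the definition of $f$, this gives $T(r_1 z) = T(r_2 z)$, say with digits $a_1, a_2 \in \Zee$ making $a_1 \iota(r_1 z) = a_2 \iota(r_2 z)$, i.e.\ $a_1 \iota r_1 \iota^{-1}(\iota z) = a_2 \iota r_2 \iota^{-1}(\iota z)$. Both $a_i \iota r_i \iota^{-1}$ belong to $\Stab_{\Mod}(\infty)$, and for $z$ outside the exceptional set $E$ from \eqref{eq:Edefn} there is only one element of $\Stab_{\Mod}(\infty)$ that moves $\iota z$ into $K$; hence $a_1 \iota r_1 \iota^{-1} = a_2 \iota r_2 \iota^{-1}$. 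Since $\iota r_i \iota^{-1} \in \mathcal{R}$ (they fix $0$ and $\infty$ and lie in $\Mod$), the uniqueness of the $\Zee \cdot \mathcal{R}$ decomposition in Definition \ref{defi:centrallySymmetric} forces $\iota r_1 \iota^{-1} = \iota r_2 \iota^{-1}$, hence $r_1 = r_2$. Since $E$ has measure zero, this injectivity holds almost everywhere, completing the verification that $T \cong T_c \rtimes f$.
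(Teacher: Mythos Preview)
Your proof is correct and follows essentially the same route as the paper's: the identification $(z,r)\leftrightarrow rz$, the use of Lemma~\ref{lemma:quasicommutative} to rewrite $a\iota r$ as $r'a'\iota$, the choice of $r''$ landing in $K_c$ to identify the first coordinate with $T_c(z)$, and the injectivity argument via the uniqueness of the $\Stab_\Mod(\infty)$-element carrying $\iota z$ into $K$ for $z\notin E$ all match the paper exactly. The one cosmetic difference is that you justify $\iota r_i\iota^{-1}\in\mathcal{R}$ directly (fixing $0$ and $\infty$), whereas the paper cites this from the proof of Lemma~\ref{lemma:quasicommutative}; your parenthetical remark about $(r')^{-1}r''$ is a bit muddled and could simply be dropped.
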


Theorem \ref{thm:notmain} immediately follows from the next lemma:

\begin{lemma}
Let $A$ be any ergodic component of $K$ with positive measure, then the measure of $A$ must be at least $1/|\mathcal{R}|$ (all with respect to a normalized Lebesgue measure on $K$).
\begin{proof}
We may consider $A$ as a positive measure subset of $K_c\times \mathcal{R}$ invariant under the skew-product  $T_c \rtimes f$ defined in the previous lemma. Consider also the standard projection onto the first coordinate: $\pi_{K}:K_c\times R\to K_c$. Since $T_c$ is the shift map associated to a discrete, proper, and \emph{complete} Iwasawa CF expansion, it will be ergodic due to Theorem \ref{thm:main}, and thus it suffices to prove that $\pi_K(A)$ is a $T_c$-invariant set, since it must have full measure on $K_c$ (i.e., $1/|\mathcal{R}|$).

Suppose $z\in \pi_K(A)$, so that there exists $r\in \mathcal R$ such that $(z,r)\in A$. Let $z'\in T_c^{-1}z$. Then, since $T_c\rtimes f$ is a skew-product, there exists (for almost all such $z$) $r' \in \mathcal R$ such that $(T_c\rtimes f)(z',r')=(z,r)$. Thus $(z',r')\in (T_c\rtimes f)^{-1} A = A$, so $z'\in \pi_K(A)$. Thus $T_c^{-1}\pi_K(A)$ is (up to measure zero), a subset of $\pi_K(A)$.

Now suppose $z\in \pi_K(A)$ and again let $r\in \mathcal R$ be such that $(z,r)\in A=T^{-1} A$. Thus $T(z,r)\in A$, and projecting this into the first coordinate, we see that $T_c z\in \pi_K(A)$. Thus $\pi_K(A)\subset T_c^{-1}\pi_K(A)$. This proves the two sets are equal up to measure zero, as desired.
\end{proof}
\end{lemma}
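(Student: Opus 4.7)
The plan is to combine the skew-product decomposition $T \cong T_c \rtimes f$ on $K_c \times \mathcal{R}$ established in the immediately preceding lemma with the ergodicity of $T_c$ on $K_c$, which is available because the completion is discrete, proper, and complete, and hence falls under Theorem \ref{thm:main}. The strategy is to show that any $T$-invariant positive-measure set $A$ must project to a full-measure subset of $K_c$, which immediately forces $A$ to have measure at least $1/|\mathcal{R}|$ since the $|\mathcal{R}|$ translates $rK_c$ partition $K$ up to null sets and each carries mass $1/|\mathcal{R}|$ under the normalized Lebesgue measure.

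Concretely, I would view $A \subset K_c \times \mathcal{R}$ via the identification $(z,r) \leftrightarrow rz$, and set $B := \pi_K(A) \subset K_c$. The key step is to verify that $B$ is $T_c$-invariant, for then the ergodicity of $T_c$ forces $B = K_c$ mod null sets, and the desired measure lower bound follows by a routine Fubini-type count over the finite fiber $\mathcal{R}$.

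The easy inclusion $T_c B \subseteq B$ comes directly from $T$-invariance of $A$: if $(z,r) \in A$, then $T(z,r) \in A$ and its first coordinate is $T_c z \in B$. The reverse inclusion $T_c^{-1} B \subseteq B$ is where care is required: given $z \in B$ with witness $(z,r) \in A$, and an arbitrary $z' \in T_c^{-1}(z)$, I need an $r' \in \mathcal{R}$ with $(T_c \rtimes f)(z',r') = (z,r)$. Here I would invoke the crucial fact, proven in the previous lemma, that $f(z',\cdot) : \mathcal{R} \to \mathcal{R}$ is a bijection for almost every $z'$. Bijectivity yields the required $r'$, whence $(z',r') \in T^{-1} A = A$ (up to null sets) and $z' \in B$.

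The only step I expect to require genuine attention is this reverse invariance, because it is the one place where the skew-product structure must be used in an essential way rather than merely pushed forward: surjectivity of $T_c$ alone would not suffice to lift preimages back into $A$, and the argument collapses without the bijectivity of $f(z,\cdot)$ on fibers. Since that bijectivity is already established, the remainder of the proof is essentially bookkeeping: once $B$ has full measure in $K_c$, the disjoint decomposition $K = \bigsqcup_{r \in \mathcal{R}} rK_c$ and the normalization $\mu(K) = 1$ give $\mu(A) \geq \mu(B)/|\mathcal{R}| = 1/|\mathcal{R}|$.
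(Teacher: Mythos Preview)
Your proposal is correct and follows essentially the same route as the paper: identify $A$ with a $(T_c\rtimes f)$-invariant subset of $K_c\times\mathcal{R}$, show that its projection $B=\pi_K(A)$ is $T_c$-invariant by checking both inclusions (using bijectivity of $f(z',\cdot)$ for the preimage direction), invoke ergodicity of $T_c$ from Theorem~\ref{thm:main} to get full measure of $B$ in $K_c$, and read off the bound. One small remark: the cleaner final inequality is $\mu(A)\ge\mu(B)=1/|\mathcal{R}|$ (since $\mu(A)=\sum_r\mu(A_r)\ge\mu\bigl(\bigcup_r A_r\bigr)=\mu(B)$), rather than $\mu(A)\ge\mu(B)/|\mathcal{R}|$.
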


In certain cases one can show that the skew-product over an ergodic transformation is itself ergodic, see  \cite{Vmatrix} and related papers of the second author for some interesting examples. If we could prove such a result here, we could remove the completeness condition in the case of centrally symmetric systems.

\subsection{Application: Tail Equivalence}\label{sec:tail}

In this section we prove Theorem \ref{thm:IntroTail} in the following more precise formulation (note that markable geodesics are generic by Corollary \ref{cor:markablegeneric}):

\begin{thm}[Tail equivalence of markable geodesics]
\label{thm:tail}
Let $\gamma$ be a markable geodesic and $\gamma'=M\gamma$ with $M\in\Mod$ and $\gamma'_+\in K$. If $a_i, a'_i$ are the sequence of CF digits of $\gamma_+$ and $\gamma'_+$, respectively, then they have the same tail---i.e., there exist some $k,k'\in\mathbb{N}$ such that $a_{k+i}=a'_{k'+i}$ for all $i\ge 1$. 
\begin{remark}
We note that the condition $\gamma'_+\in K$ is not necessary. If it were not there, we could define $a'_0=[{\gamma'_+}]$ and let the continued fraction expansion of $\gamma'_+$ start with this $a'_0$; however, since this $a'_0$ might be confused with the corresponding digit of the marking, we will not use it here.
\end{remark}
\begin{proof}
While $\gamma'$ is a markable geodesic, it may or may not pass through $\CW$.

The result follows immediately from Theorem \ref{thm:markable} if $\gamma'$ \emph{does} pass through $\CW$: the Cusp Detection Property gives us that for some $j$, $M=M_{i_j}^{-1}$. So the marking of $\gamma'$ is a shift of the marking of $\gamma$. If $j\ge 0$, then $a'_i=a_{i_j+i}$ for $i\ge 1$, and if $j<0$, then $a'_{-i_j+i}=a_i$ for $i\ge 1$.

We now assume that $\gamma'$ does not pass through $\CW$. If $\norm{\gamma'_-}\ge 1+\epsilon$, with $\epsilon$ as in Lemma \ref{lemma:horocap}, then we apply Lemma \ref{lemma:finiteintersection} to see that $\gamma'$ intersects $\W$. Let $\gamma''(t)=\gamma'(t+t')$ be such that $\gamma''(0)\in\W$. On the other hand, if $\norm{\gamma'_-}<1+\epsilon$, then we may apply the proof of Lemma \ref{lemma:exile} to $\gamma'$ to find an index $\ai_1$ and corresponding time $\tee_1$ such that $M_{\ai_1}^{-1}\gamma'(\tee_1)\in\W$. (Note that the condition in the lemma that $\gamma(0)\in\W$ is not actually used in the proof, only that $|\gamma(0)|<1+\epsilon$. Moreover, since $\gamma'$ is markable, we know that $\gamma'_+\not\in \Mod\infty$.) In this case, let $\gamma''(t)=M_{\ai_1}^{-1}\gamma'(t+\tee_1)$, so that once again $\gamma''(0)\in \W$.

We claim that $\gamma'_+$ and $\gamma''_+$ are tail-equivalent. This is obvious in the first case, since $\gamma'_+=\gamma''_+$. In the second case, they are still tail-equivalent, since $\gamma''_+=T^{\ai_1}\gamma'_+$ and $T$ again acts via a shift of the digits. Moreover, $\gamma''$ is still a markable geodesic, since this property is $\Mod$-invariant. 

By applying the idea of the proof of Lemma \ref{lemma:isspeedup}, we have that $\gamma''$ intersects $M_{\ai_j}\CW$ at time $\tee_j$ for some $j$. In particular, if we let $\gamma'''(t)=M_{\ai_j}^{-1}\gamma''(t+\tee_j)$, then by the same argument as previously, we see that $\gamma'''_+$ is tail-equivalent to $\gamma''_+$ and hence to $\gamma'_+$. In addition, $\gamma'''$ now passes through $\CW$ so our earlier argument applies and we see that $\gamma'''_+$ is tail-equivalent to $\gamma_+$, as desired.
\end{proof}
\end{thm}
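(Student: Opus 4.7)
The plan is to reduce to the easy case where $\gamma'$ itself passes through $\CW$, since there the Markable Geodesic Theorem applies directly. The invariant I will maintain throughout is that replacing any markable geodesic by $M_{\ai}^{-1}\gamma(\cdot + \tee)$ for some $\ai, \tee$ associated to its marking only shifts the CF digit sequence of its forward endpoint, and so preserves tail-equivalence. Chaining a bounded number of such modifications will then suffice.

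\emph{Easy case.} Suppose $\gamma'(s) \in \CW$ for some $s\in\R$. Shift time so that $\gamma'(0)\in \CW$. Then $\gamma(-s) = M^{-1}\gamma'(0) \in M^{-1}\CW$, so the Intersection Detection property of Theorem \ref{thm:markable} applied to the markable geodesic $\gamma$ forces $M^{-1} = M_{i_j}$ for some $j$ in the marking of $\gamma$. The Shifted Gauss Equivariance property then identifies the digit sequence of $\gamma'_+$ with the tail of the digit sequence of $\gamma_+$ starting at index $i_j+1$, which is the desired tail-equivalence.

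\emph{Hard case.} If $\gamma'$ does not meet $\CW$, I first produce an intermediate geodesic $\gamma''$ with $\gamma''(0)\in \W$ and $\gamma''_+$ tail-equivalent to $\gamma'_+$. If $\norm{\gamma'_-}\ge 1+\epsilon$ (with $\epsilon$ as in Lemma \ref{lemma:horocap}), then $\gamma'$ meets $\W$ by Lemma \ref{lemma:finiteintersection}, and I just translate time so that $\gamma''(0)\in \W$ and $\gamma''_+ = \gamma'_+$. Otherwise $\norm{\gamma'_-} < 1+\epsilon$, and I rerun the iterative construction in the proof of Lemma \ref{lemma:exhile} starting from $\gamma'$, which produces indices $\ai_1, \tee_1$ with $M_{\ai_1}^{-1}\gamma'(\tee_1) \in \W$; I set $\gamma''(t) = M_{\ai_1}^{-1}\gamma'(t + \tee_1)$, so that $\gamma''_+$ is the $\ai_1$-th Gauss iterate of $\gamma'_+$ and hence tail-equivalent to it.

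Since $\gamma''$ is a $\Mod$-translate of the markable geodesic $\gamma$, it is itself markable, so the decomposition in Lemma \ref{lemma:upshot} combined with Lemma \ref{lemma:isspeedup} produces $\ai_j, \tee_j$ such that $\gamma'''(t) := M_{\ai_j}^{-1}\gamma''(t+\tee_j)$ satisfies $\gamma'''(0)\in \CW$, with $\gamma'''_+$ again tail-equivalent to $\gamma''_+$. The easy case applied to $\gamma'''$ (still a $\Mod$-translate of $\gamma$) yields tail-equivalence of $\gamma'''_+$ and $\gamma_+$, and chaining the three tail-equivalences $\gamma'_+ \sim \gamma''_+ \sim \gamma'''_+ \sim \gamma_+$ finishes the proof. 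The main subtlety I anticipate is justifying the use of Lemma \ref{lemma:exhile} in the second sub-case, where $\gamma'(0)$ is only known to satisfy $\norm{\gamma'(0)}<1+\epsilon$ rather than to lie in $\W$; a direct inspection of the proof of that lemma shows its $\W$-hypothesis enters solely through this weaker norm bound together with $\gamma'_+\notin\Mod\,\infty$ (which holds automatically because $\gamma'$ is markable), so no modification is required.
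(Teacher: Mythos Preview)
Your proposal is correct and follows essentially the same route as the paper's proof: the same easy/hard split, the same two sub-cases in the hard part based on $\norm{\gamma'_-}$, the same recourse to the proof of Lemma~\ref{lemma:exhile} with the identical observation that only the bound $\norm{\gamma'(0)}<1+\epsilon$ is actually used, and the same final reduction via Lemma~\ref{lemma:isspeedup} to the easy case. The only cosmetic difference is that in the easy case you invoke Intersection Detection rather than Cusp Detection from Theorem~\ref{thm:markable}; both yield $M^{-1}=M_{i_j}$, and your choice is arguably the more direct one.
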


\bibliographystyle{amsplain}
\bibliography{bib}

\end{document}